\documentclass[11pt,reqno,a4paper]{amsart}

\usepackage{soul}
\usepackage{amsthm}
\usepackage{amssymb}
\usepackage{latexsym}
\usepackage{multicol,multirow}
\usepackage{verbatim,enumerate}
\usepackage{accents,upgreek}
\usepackage[usenames]{color}
\usepackage[colorlinks=true, linkcolor=blue, citecolor=green, urlcolor=blue]{hyperref}
\usepackage[nameinlink,noabbrev,capitalize]{cleveref}
\usepackage{nicematrix,makecell}
\usepackage{amsmath, amscd}
\usepackage{soul,enumitem}
\usepackage{pifont}
\usepackage{dsfont}
\usepackage{dirtytalk,caption,subcaption}
\usepackage{tikz,setspace}
\usetikzlibrary{matrix,arrows,decorations.pathmorphing}
\usetikzlibrary{positioning}

\advance\textwidth by 1.2in \advance\oddsidemargin by -.6in \advance\evensidemargin by -.6in

\theoremstyle{plain}
\newtheorem{prop}{Proposition}
\newtheorem{thm}{Theorem}
\newtheorem{lem}{Lemma}
\newtheorem{cor}{Corollary}


\theoremstyle{definition}
\newtheorem*{example}{Example}
\newtheorem*{defn}{Definition}


\theoremstyle{remark}
\newtheorem{rem}{Remark}


\newcommand{\lie}[1]{\mathfrak{#1}}

\newcommand{\mr}[1]{\mathring{#1}}
\newcommand\br{\mathbb R}
\newcommand\bc{\mathbb C}
\newcommand\bz{\mathbb Z}

\newcommand{\xmark}{\ding{55}}%

\def\a{\alpha}


\newcounter{cnt}
 \makeatletter
\def\mydggeometry{\makeatletter\dg@YGRID=1\dg@XGRID=20\unitlength=0.003pt\makeatother}
\makeatother \theoremstyle{remark}



\numberwithin{equation}{section}
\makeatletter
\def\section{\def\@secnumfont{\mdseries}\@startsection{section}{1}%
	\z@{.7\linespacing\@plus\linespacing}{.5\linespacing}%
	{\normalfont\scshape\centering}}
\def\subsection{\def\@secnumfont{\bfseries}\@startsection{subsection}{2}%
	{\parindent}{.5\linespacing\@plus.7\linespacing}{-.5em}%
	{\normalfont\bfseries}}
\makeatother


\title{Maximal root subsystems of affine reflection systems and duality}

\author{Irfan Habib}\address{The Institute Of Mathematical Science, A CI Of Homi Bhabha National Institute, Chennai 600113, India}
\email{irfanhabib@imsc.res.in}
\thanks{}

\begin{document}
	\begin{abstract}
		Any maximal root subsystem of a finite crystallographic reduced root system is either a closed root subsystem or its dual is a closed root subsystem in the dual root system. In this article, we classify the maximal root subsystems of an affine reflection system (reduced and non-reduced) and prove that this result holds in much more generality for reduced affine reflection systems. Moreover, we explicitly determine when a maximal root subsystem is a maximal closed root subsystem. Using our classification, at the end, we characterize the maximal root systems of affine reflection systems with nullity less than or equal to $2$ using Hermite normal forms; especially for Saito's EARS of nullity $2.$ This in turn classifies the maximal subgroups of the Weyl group of an affine reflection system that are generated by reflections.
	\end{abstract}
	
	\maketitle
	
	\section{Introduction}
	
	In \cite{hoegh1990classification} H{\o}egh--Krohn and Torresani introduced a new class of infinite-dimensional Lie algebras which generalizes both finite-dimensional simple Lie algebras and affine Lie algebras. They called the Lie algebras as quasi-simple Lie algebras. A \textit{quasi-simple Lie algebra} $\mathcal{L}$ is equipped with an invariant, symmetric, non-degenerate bilinear form which has a distinguished finite-dimensional self-centralizing subalgebra $\mathcal{H}.$ Moreover, the Lie algebra $\mathcal{L}$ has a weight space decomposition with respect to $\mathcal{H}.$ Since then, a vast number of research has been done to understand them, specially after \cite{allison1997extended} where the name Extended Affine Lie algebra (EALA in short) appeared for the first time. One other standard reference for EALA is \cite{neher2004extended}.
	
	More generally, Lie algebras $\mathcal{L}$ having weight space decomposition with respect to a non-zero abelian subalgebra $\mathcal{H}$ (called \textit{toral subalgebra}) form a large class of Lie algebras. We can assign to each such Lie algebra $\mathcal{L}$ a subset $\Delta$ of the dual space $\mathcal{H}^*$ of $\mathcal{H},$ called its \textit{root system}. 
	The interaction of the Lie algebra with its root system provides an approach to studying the structure of Lie algebra via its root systems. For a finite-dimensional simple Lie algebra $\mathcal{L}$, the root system of $\mathcal{L}$ completely determines $\mathcal{L}$ upto isomorphism (see \cite{humphreys2012introduction}). There is a non-degenerate  bilinear form on $\mathcal{H}^*$ induced by the non-degenerate bilinear form on $\mathcal{H}$. A non-isotropic root relative to this form is called a \textit{real root}. Assume that $\Phi$ is the collection of all real roots in $\Delta.$  There is an associated group, called the \textit{Weyl group}, which is a subgroup of $GL(\mathcal{H}^*)$ (the group of invertible operators on $\mathcal{H}^*$) and is by definition generated by the reflections $\{s_\alpha: \alpha\in \Phi\}$. A subset $\Psi$ of $\Phi$ is called a \textit{root subsystem} if $s_\alpha(\beta)\in\Psi$ for all $\alpha,\beta\in\Psi.$ The reflection subgroups of the Weyl group are in one-to-one correspondence with the root subsystems of $\Phi.$ In particular, maximal root subsystems correspond to maximal reflection subgroups of the Weyl group. This in turn provokes a systematic study of the structure of the root systems apart from its connection with the Lie algebra (see \cite{loos2004locally,neher2011extended,yoshii2004root}). One of the motivations of this work is to study the reflection subgroups of the Weyl groups of affine reflection systems (which contains studying the reflection subgroups of extended affine Lie algebras) with more emphasis on the combinatorial aspects.
	
	The classification of the reflection subgroups of finite and affine Weyl groups has been achieved in \cite{dyer2011reflection} by classifying 
	root subsystems of finite and real affine root systems. 
	It is well-known that the classification of all root subsystems may be deduced from that of the closed root subsystems in the finite setting, see \cite{Carter72}. The root subsystems of untwisted affine Lie algebras have been characterized in \cite{dyer2011root}. The maximal root subsystems for twisted affine root systems were still un-characterized and in this article, we shall describe them as Corollary of the classification Theorem.
	
	A subset $\Psi$ of $\Phi$ is called a \textit{closed subset} if $\alpha+\beta\in\Psi$ whenever $\alpha,\beta\in\Psi$ and $\alpha+\beta\in\Phi.$ If in addition, $\Psi$ is a root subsystem, we call $\Psi$ a \textit{closed root subsystem} .  Closed root subsystems (subsets) have been studied extensively since they are closely related to the $\mathcal{H}$ invariant subalgebras of $\mathcal{L}.$ The problem of classifying closed subsets even for (real) affine root systems is wide open. 
	The invertible closed subsets of affine root systems were classified in \cite{CKS98} and the parabolic subsets were classified in \cite{Futorny}. 
	Anna Felikson et al. started the classification of regular subalgebras of affine Lie algebras (which are related to the closed root subsystems of real affine root systems) in \cite{FRT08} and it was completed in \cite{roy2019maximal}, see also \cite{kus2021borel}. A combinatorial description of biclosed sets of real affine root systems was given in \cite{BS22}.
	In the finite-dimensional setting, closed subsets of a root system are in one-to-one correspondence with the Cartan invariant subalgebra (see \cite[Proposition 4.1]{douglas2021closed}). Maximal closed root subsystems were classified in \cite{BdS} in the finite-dimensional setting, in \cite{roy2019maximal} for affine root systems and in \cite{kus2021borel} for affine reflection systems. Reflectable bases for affine reflection systems were studied in \cite{azam12reftectable}. Symmetric closed subsets of affine root systems and their correspondence with the regular subalgebras had been studied in \cite{biswas2023symmetric}. For a general symmetrizable Kac--Moody algebra, in \cite{habib2023root}, the authors proved that Dynkin's results hold and they identified the correct family of subalgebra which are determined by the combinatorics of their root systems. The embedding problem for rank $2$ Kac--Moody Lie algebras was  addressed combinatorially in \cite{habib2024pi}.
	
	Affine reflection systems generalize all the above-mentioned root systems (see Section \ref{secdefars} for a precise definition). It includes finite root systems, affine root systems, extended affine root systems to name a few. In this work, we will classify the maximal root subsystems of an affine reflection system, which, in turn, will provide a more explicit classification of the maximal subgroups of its Weyl group generated by reflections. One other motivation of this work is the following result in the finite-dimensional setting: If $\Psi$ is a maximal root subsystem of a finite reduced root system $\Phi,$ then either $\Psi$ is closed in $\Phi$ or $\Psi^\vee$ is closed in $\Phi^\vee$ (see \cite[Corollary 2]{dyer2011reflection}). We provide a positive answer to this statement for reduced affine reflection systems in Theorem \ref{thmclsdordualclsd}. The proof based on the classification Theorems; Theorem \ref{thmrednotoftypeB} and Theorem \ref{thmredoftypeB}, where we classify 
	the maximal root subsystems of an irreducible reduced affine reflection system depending on the gradient type whereas Theorem \ref{thmnonred} classifies the same for non-redcued affine reflection systems. Moreover, we also explicitly describe when a maximal root subsystem of an affine reflection system is closed in Table \ref{Tablereduced} for reduced and Remark \ref{nonredmaxclsd} for non-reduced.
	In the last Section, we apply our classification Theorems for affine reflection systems of nullity at most $2.$ We explicitly classify the maximal root subsystems of Saito's EARS defined in \cite[Section 5]{saito1985extended} using the Hermite normal forms.

	\textit{The paper is organized as follows:} In Section \ref{section2} we recall facts about finite root systems, construction of affine reflection systems and lattices in finite-dimensional real vector spaces along with the related Hermite normal forms. We also introduce the notion of dual motivated by \cite{macdonald2003affine}. In Section \ref{secrootsubsystems} we review the basic facts about the root subsystems and the dual of them. Section \ref{secrednotB} and Section \ref{secredB} classify the maximal root subsystems of a reduced affine reflection system where the gradient is not of type $B$ and type $B$ respectively. Section \ref{secnonred} is devoted to the classification of the maximal root subsystems of a non-reduced affine reflection system. Finally, Section \ref{sectionnullityle2} provides applications of the classification results for affine reflection systems of nullity at most $2$.
	
	
	\section{Extension datum and affine reflection systems}\label{section2}
	Throughout this paper we denote by $\mathbb{C}$ (resp. $\mathbb{R}$) the field of complex (resp. real) numbers and by $\mathbb{Z}$ (resp. $\mathbb{Z}_{+}$, $\mathbb{N}$) the subset of integers (resp. non-negative, positive integers). For a subset $A$ of a group $G,$ we denote by $\langle A\rangle$ the subgroup of $G$ generated by $A.$
	
	\subsection{Finite root systems} Let $I$ be a finite index set often identified with $\{1,2,\dots,n\}$ if $|I|=n.$ Let $V$ be a Euclidean space with an inner product $(\cdot,\cdot)$. For the rest of this paper we denote by $\mathring{\Phi}$ a finite root system in $V$. For $\alpha\in\mr{\Phi}$, we can define a reflection $s_\alpha:V\to V$ given by $s_\alpha(v):=v-2\frac{(v,\alpha)}{(\alpha,\alpha)}\alpha,\ v\in V.$
	The \textit{Weyl group} $\mr{W}$ of $\mr{\Phi}$ is defined to be the subgroup of the invertible linear maps on $V$ generated by the reflections $s_{\alpha},\alpha\in\mathring{\Phi}.$ A root system $\mr{\Phi}$ is called \textit{reduced} if $\alpha/2\notin \mr{\Phi}$ for all $\alpha\in \mr{\Phi}$ and called \textit{reducible} if there exist disjoint subsets $\mr{\Phi}_1,\mr{\Phi}_2$ of $\mr{\Phi}$ such that $\mr{\Phi}=\mr{\Phi}_1\cup \mr{\Phi}_2$ and $(\mr{\Phi}_1,\mr{\Phi}_2)=0.$ A root system that is not reduced (resp. reducible) is called \textit{non-reduced} (resp. \textit{irreducible}). If $\mathring{\Phi}$ is reduced and irreducible, then at most two root lengths occur in $\mathring{\Phi}$ and all roots of a given length are conjugate under $\mathring{W}$. We denote the short (resp. long) roots in $\mathring{\Phi}$ by $\mr{\Phi}_s$ (resp. $\mr{\Phi}_\ell$). If $\mathring{\Phi}$ is non-reduced and irreducible, then $\mr{\Phi}$ is of type $BC_I$ and we set
	$$\mathring{\Phi}_s=\{\pm\epsilon_i,i\in I\},\ \ \mathring{\Phi}_\ell=\{\pm\epsilon_i\pm\epsilon_j: i\neq j,\ i,j\in I\},\ \ \mathring{\Phi}_d=\{\pm2\epsilon_i,i\in I\},$$ where $\{\epsilon_i:i\in I\}$ is the standard orthonormal basis of $V.$ We define $\mr{\Phi}_{nd}:=\mr{\Phi}\backslash \mr{\Phi}_d.$ For an irreducible root system $\mr{\Phi},$ we denote by $m_{\mr{\Phi}}$ or simply by $m$ the lacing number of $\mr{\Phi}.$ For more details about finite root systems, see \cite{bourbaki2002lie,humphreys2012introduction}. Reduced root systems appear as the set of roots of finite-dimensional semisimple Lie algebras with respect to a Cartan subalgebra \cite{humphreys2012introduction} and non-reduced root systems appear in the context of infinite-dimensional Lie algebras \cite{kac1990infinite}. One can also define locally finite root systems e.g. \cite{loos2004locally} since they appear naturally in the theory of affine reflection systems, see Theorem \ref{strthm}.

	\subsection{Affine reflection systems}\label{secdefars} Now we introduce the main object of study, the affine reflection systems. For more details about affine reflection systems, the reader is referred to \cite{loos2011reflection,neherextendedaffine2011}.
	Let $X$ be a finite-dimensional real vector space, $(\cdot,\cdot)_X$ a symmetric bilinear form on $X$ and $\Delta\subseteq X$. Define 
	\begin{align*}X^0=\{x\in X: (x,X)_X=0\},\ \ & \ \ \Delta^{\text{im}}=\{\alpha\in \Delta: (\alpha,\alpha)_X=0\} ,\ (\text{imaginary roots})\\ \Delta^{\text{re}} =\{\alpha\in \Delta:& (\alpha,\alpha)_X\neq 0\},\ (\text{real roots}).\end{align*}
	For $\alpha\in\Delta^{\text{re}}$ and $x\in X,$ we define $(x,\alpha^\vee)=2\frac{(x,\alpha)_X}{(\alpha,\alpha)_X}$ and $s_\alpha(x)=x-(x,\alpha^\vee)_X\alpha.$
	\begin{defn}
		We call a triple $(X,\Delta,(\cdot,\cdot)_X)$ an \textit{affine reflection system} if the following hold:
		\begin{enumerate} 
			\item [(i)] $0\in \Delta,\  \Delta \text{ spans $X$},\  \Delta^{\text{im}}\subseteq X^0$,
			\item  [(ii)] $s_{\alpha}(\Delta)=\Delta,\ \ \forall \alpha\in \Delta^{\text{re}},$
			\item  [(iii)] $(\Delta,\alpha^{\vee})_X\subseteq\bz$ is a finite subset $\forall \alpha\in \Delta^{\text{re}}$.
		\end{enumerate}
		
		The \textit{Weyl group} $\mathcal{W}$ of $(X,\Delta,(\cdot,\cdot)_X)$ is defined to be the subgroup of $\mathrm{GL}(X^*)$ generated by the reflections $s_\alpha,\alpha\in\Delta^{\mathrm{re}}.$ We set $\Phi:=\Delta^{\text{re}}$ for the rest of the paper. We shall sometimes ignore the imaginary roots and write  $(X,\Phi,(\cdot,\cdot)_X)$ or simply $\Phi$ to denote an affine reflection system. Moreover, we shall write $(\cdot,\cdot)$ instead of $(\cdot,\cdot)_X$ when the underlying vector space $X$ is clear from the context.

	\end{defn}
	
	\subsection{Extension datum} It turns out that any reflection system can be constructed from a root system and an extension datum.
	\begin{defn}\label{extda} Let $\mathring{\Phi}$ be a locally finite root system in $V$ and $Y$ be a finite-dimensional real vector space. We call a collection $(\Lambda_{\alpha}: \alpha\in \mathring{\Phi}\cup\{0\})$, $\Lambda_{\alpha}\subseteq Y$ an extension datum of type $(\mathring{\Phi},Y)$ if it satisfies the following axioms:
		\begin{enumerate}
			\item $\Lambda_{\beta}-(\beta,\a^{\vee})\Lambda_{\a}\subseteq \Lambda_{s_{\a}(\beta)},\ \ \forall \alpha,\beta\in \mathring{\Phi}$.
			\item $0\in\Lambda_{\alpha}$ for $\alpha\in \mathring{\Phi}_{nd}\cup\{0\}$ and $\Lambda_{\alpha}\neq \emptyset$ for $\alpha\in \mathring{\Phi}_{d}$.
			\item $Y=\text{span}_{\mathbb{R}}(\bigcup_{\alpha\in \mathring{\Phi}\cup\{0\}} \Lambda_\alpha)$.
		\end{enumerate}
	\end{defn}
	
	Given an extension datum of type $(\mathring{\Phi},Y),$ we can construct an affine reflection system $(X,\Delta,(-,-)_X)$ as follows: 
	\begin{equation}\label{consaff}X=V\oplus Y,\ \ \Delta=\bigcup_{\alpha\in \mathring{\Phi}\cup\{0\}}\alpha\oplus \Lambda_{\alpha},\ \ (v_1\oplus y_1, v_2\oplus y_2)_X=(v_1,v_2).\end{equation}

	The following result is the structure Theorem for affine reflection systems \cite[Theorem 4.6]{loos2011reflection}.
	\begin{thm}\label{strthm}
		Let $\mathring{\Phi}$ be a locally finite root system and $(\Lambda_{\alpha}: \alpha\in \mathring{\Phi}\cup\{0\})$ be an extension datum of type $(\mathring{\Phi},Y)$. The triple $(X,\Delta,(\cdot,\cdot)_X)$ constructed in Equation \eqref{consaff} is an affine reflection system with 
		$$\Delta^{\text{im}}=\Lambda_0,\ \ X^0=Y,\ \ \Delta^{\text{re}}=\bigcup_{\alpha\in \mathring{\Phi}}\alpha\oplus \Lambda_{\alpha}.$$
		Moreover, any affine reflection system is isomorphic to an affine reflection system constructed in this way and $(X,\Delta,(\cdot,\cdot)_X)$ is irreducible if and only if the underlying finite root system $\mathring{\Phi}$ is irreducible.
		\hfill\qed
	\end{thm}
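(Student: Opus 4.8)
The plan is to prove the three assertions of the statement in turn: \textbf{(a)} the triple $(X,\Delta,(\cdot,\cdot)_X)$ of \eqref{consaff} is an affine reflection system with $X^0=Y$, $\Delta^{\mathrm{im}}=\Lambda_0$ and $\Delta^{\mathrm{re}}=\bigcup_{\alpha\in\mathring{\Phi}}\alpha\oplus\Lambda_\alpha$; \textbf{(b)} conversely, every affine reflection system arises this way; \textbf{(c)} the triple is irreducible precisely when $\mathring{\Phi}$ is.

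For \textbf{(a)} I would compute everything directly from the form in \eqref{consaff}. Nondegeneracy of $(\cdot,\cdot)$ on $V$ gives $X^0=Y$, and $(\alpha\oplus\lambda,\alpha\oplus\lambda)_X=(\alpha,\alpha)$ together with anisotropy of the roots of $\mathring{\Phi}$ identifies $\Delta^{\mathrm{im}}$ and $\Delta^{\mathrm{re}}$ as stated. The axioms of an affine reflection system then all reduce to the single identity
\[
  s_{\alpha\oplus a}(\beta\oplus b)=s_{\alpha}(\beta)\oplus\bigl(b-(\beta,\alpha^{\vee})\,a\bigr),\qquad
  \alpha\in\mathring{\Phi},\ \beta\in\mathring{\Phi}\cup\{0\},\ a\in\Lambda_\alpha,\ b\in\Lambda_\beta,
\]
which holds because $\bigl(\beta\oplus b,(\alpha\oplus a)^{\vee}\bigr)=2(\beta,\alpha)/(\alpha,\alpha)=(\beta,\alpha^{\vee})$ only sees the $V$-components. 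From it, axiom (ii) is exactly axiom (1) of the extension datum combined with $s_{\alpha\oplus a}^{2}=\mathrm{id}$; axiom (iii) follows since $(\beta,\alpha^{\vee})\in\bz$ takes finitely many values; and axiom (i) follows from $0\in\Lambda_0$ (extension-datum axiom (2)), from $\mathring{\Phi}$ spanning $V$, and from $Y=\mathrm{span}_{\br}\bigcup_\alpha\Lambda_\alpha$ (extension-datum axiom (3)), where one uses $\alpha\oplus 0\in\Delta$ for non-divisible $\alpha$ to deduce $0\oplus\lambda\in\mathrm{span}_{\br}\Delta$ for every $\lambda$.

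For \textbf{(b)} the plan is to quotient by the radical. Let $\bar X=X/X^0$ with its induced (nondegenerate) form, which makes $\bar X$ Euclidean, and fix a linear complement $V$ of $X^0$ in $X$, identified with $\bar X$ via $\pi\colon X\to\bar X$. Every $\tilde{\alpha}\in\Phi=\Delta^{\mathrm{re}}$ is anisotropic, hence disjoint from $X^0$, and $s_{\tilde{\alpha}}$ fixes $X^0$ pointwise, so $s_{\tilde{\alpha}}$ descends to the reflection $s_{\pi(\tilde{\alpha})}$ on $\bar X$. Consequently $\mathring{\Phi}:=\pi(\Phi)$ is stable under its reflections, crystallographic (integrality and finiteness of pairings inherited from axiom (iii)), and spans $\bar X$ (as $\Delta$ spans $X$ and $\Delta^{\mathrm{im}}\subseteq X^0$), hence is a locally finite root system in $V$. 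Setting $\Lambda_\alpha:=\{y\in X^0:\alpha\oplus y\in\Delta\}$ for $\alpha\in\mathring{\Phi}\cup\{0\}$ gives $\Delta=\bigcup_\alpha\alpha\oplus\Lambda_\alpha$ tautologically, and the extension-datum axioms (1)--(3) then come out by reading the reflection identity of \textbf{(a)} in reverse and by reusing $0\in\Delta$ and the spanning of $X$. \emph{This is where I expect the real obstacle to lie}: the complement $V$ (equivalently, a base point) must be chosen coherently so that the normalizing axiom (2) holds, i.e.\ $0\in\Lambda_\alpha$ for non-divisible $\alpha$ and $\Lambda_\alpha\neq\emptyset$ for divisible $\alpha$; securing this forces one to exploit reflection-stability of $\Phi$ to build a compatible family of lifts, and it is the technical core of the theorem, the remaining axioms being formal.

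Finally, \textbf{(c)} is immediate once one notes that the form on $X$ is the $\pi$-pullback of the form on $V$, so $\bigl(\tilde{\alpha},\tilde{\beta}\bigr)_X=\bigl(\pi(\tilde{\alpha}),\pi(\tilde{\beta})\bigr)$ and two real roots with equal $\pi$-image would force that image to be isotropic in $\mathring{\Phi}$, which is impossible. Hence any decomposition $\Phi=\Psi_1\sqcup\Psi_2$ with $(\Psi_1,\Psi_2)_X=0$ and both parts nonempty maps onto an orthogonal decomposition $\mathring{\Phi}=\pi(\Psi_1)\sqcup\pi(\Psi_2)$ with both parts nonempty, and conversely an orthogonal decomposition $\mathring{\Phi}=\mathring{\Phi}_1\sqcup\mathring{\Phi}_2$ lifts to $\Phi=\bigl(\bigcup_{\alpha\in\mathring{\Phi}_1}\alpha\oplus\Lambda_\alpha\bigr)\sqcup\bigl(\bigcup_{\alpha\in\mathring{\Phi}_2}\alpha\oplus\Lambda_\alpha\bigr)$; so $(X,\Delta,(\cdot,\cdot)_X)$ is reducible exactly when $\mathring{\Phi}$ is, the imaginary roots lying in $X^0$ and hence being orthogonal to everything.
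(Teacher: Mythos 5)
The paper does not prove this statement at all: it is imported verbatim from Loos--Neher (\cite[Theorem~4.6]{loos2011reflection}) and stated with a \qed, so there is no internal proof to compare yours against. Judged on its own terms, your proposal gets parts (a) and (c) essentially right and complete: the reflection identity $s_{\alpha\oplus a}(\beta\oplus b)=s_\alpha(\beta)\oplus(b-(\beta,\alpha^\vee)a)$ is the correct engine, axiom (ii) does reduce to extension-datum axiom (1) plus involutivity, the spanning and finiteness checks are as you say, and the irreducibility equivalence in (c) is correctly reduced to the observation that two real roots with the same $\pi$-image cannot lie in orthogonal pieces.

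The genuine gap is exactly where you flag it, in part (b), and flagging it does not fill it. Defining $\Lambda_\alpha$ as the fiber over $\alpha$ relative to \emph{some} linear complement $V$ of $X^0$ gives a family satisfying axioms (1) and (3) of Definition~\ref{extda}, but axiom (2) --- that $0\in\Lambda_\alpha$ for every non-divisible $\alpha$ simultaneously --- is a normalization that an arbitrary complement will not satisfy, and it cannot be arranged one root at a time: one needs a single affine section $\mathring{\Phi}_{nd}\to\Phi$, $\alpha\mapsto\alpha\oplus f(\alpha)$, landing in $\Phi$ and compatible with all reflections, so that translating by $f$ moves $0$ into every $\Lambda_\alpha$ at once while preserving axiom (1). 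Constructing such a section (using $\mathcal W$-invariance of $\Phi$, connectedness of $\mathring{\Phi}$ on each irreducible component, and an induction over a root basis or a grid of the locally finite root system) is the actual content of Loos--Neher's theorem; your proposal names the obstacle but supplies no argument for why a compatible family of lifts exists, so the converse direction remains unproved. Everything else in (b) --- that $\pi(\Phi)$ is a locally finite root system and that the remaining extension-datum axioms follow formally --- is fine.
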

	
	Although most of the results of this paper are valid when we assume that $\mathring{\Phi}$ is a locally finite root system, to avoid technicalities, \textit{we shall always assume that if $(X,\Delta,(\cdot,\cdot)_X)$ is an affine reflection system given by Theorem \ref{strthm}, then $\mathring{\Phi}$ is a finite root system.} We call an affine reflection system $(X,\Delta,(\cdot,\cdot)_X)$ \textit{reduced} (resp. \textit{non-reduced}) if the underlying finite root system $\mathring{\Phi}$ is reduced (resp. non-reduced).
	\begin{example}
		\begin{enumerate}
			\item [(i)] If $\mr{\Phi}$ is a finite root system, then $\mathring{\Phi}\cup\{0\}$  is an affine reflection system of nullity $0$. Conversely, an affine reflection system of nullity $0$ if and only if it is a finite root system union $\{0\}\}$.
			\item [(ii)] The affine root system is an affine reflection system of nullity $1.$ Conversely, an affine reflection system of nullity $1$ if and only if it is an affine root system (see e.g. \cite{neher2011extended}).
			\item [(iii)] The set of roots of (untwisted) toroidal Lie algebra $\lie g\otimes \bc[t_1^{\pm 1},t_2^{\pm 1},\dots,t_k^{\pm 1}]$ is an affine reflection system of nullity $k.$
			\item [(iv)] More generally, the roots of an extended affine Lie algebra also form an affine reflection system (\cite[Section 3.4]{neherextendedaffine2011}).  
		\end{enumerate}
	\end{example}

	\subsection{Properties of extension datum}\label{propexdat} We record some elementary properties of extension datum. The next Proposition can be found in \cite[Exercise 3.16]{neherextendedaffine2011} and \cite[Theorem 3.18]{neherextendedaffine2011}.
	\begin{prop}\label{propsametyperoots}
		Let $(\Lambda_{\alpha},\alpha\in\mathring{\Phi}\cup \{0\})$ be an extension datum of type $(\mathring{\Phi},Y)$. Then for all $\alpha\in\mathring{\Phi}$ we have
		$$\Lambda_{\alpha}=\Lambda_{w(\alpha)},\ \forall w\in \mr{W},\ \ \Lambda_{\alpha}=\Lambda_{-\alpha}=-\Lambda_{\alpha}.$$
	\end{prop}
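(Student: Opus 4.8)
The plan is to derive everything from Definition~\ref{extda}(1) by feeding it the element $0$, which Definition~\ref{extda}(2) places in $\Lambda_\gamma$ for every non-divisible root $\gamma$. First I would record a basic inclusion: for any $\alpha\in\mathring{\Phi}$ and any $\gamma\in\mathring{\Phi}_{nd}$, Definition~\ref{extda}(1) gives $\Lambda_\alpha-(\alpha,\gamma^\vee)\Lambda_\gamma\subseteq\Lambda_{s_\gamma(\alpha)}$, and since $0\in\Lambda_\gamma$ we may select the element $0$ from $\Lambda_\gamma$ to conclude $\Lambda_\alpha\subseteq\Lambda_{s_\gamma(\alpha)}$. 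Since $s_\gamma=s_{2\gamma}$ whenever $2\gamma$ happens to be a root, the Weyl group $\mathring{W}$ is generated by $\{s_\gamma:\gamma\in\mathring{\Phi}_{nd}\}$, so iterating the basic inclusion along a word for $w$ yields $\Lambda_\alpha\subseteq\Lambda_{w(\alpha)}$ for every $w\in\mathring{W}$. Applying this with $(w,\alpha)$ replaced by $(w^{-1},w(\alpha))$ gives the reverse inclusion $\Lambda_{w(\alpha)}\subseteq\Lambda_{w^{-1}w(\alpha)}=\Lambda_\alpha$, hence $\Lambda_\alpha=\Lambda_{w(\alpha)}$, which is the first assertion.

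For the statement involving $-\alpha$, I would first settle the sign: taking $\beta=\gamma=\alpha$ in Definition~\ref{extda}(1) and using $(\alpha,\alpha^\vee)=2$ gives $\Lambda_\alpha-2\Lambda_\alpha\subseteq\Lambda_{s_\alpha(\alpha)}=\Lambda_{-\alpha}$, and putting the same element of $\Lambda_\alpha$ in both slots shows $-\Lambda_\alpha\subseteq\Lambda_{-\alpha}$; running the same argument with $\alpha$ replaced by $-\alpha$ gives $\Lambda_{-\alpha}\subseteq-\Lambda_\alpha$, so $-\Lambda_\alpha=\Lambda_{-\alpha}$. It then remains to identify $\Lambda_\alpha$ with $\Lambda_{-\alpha}$. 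If $\alpha\in\mathring{\Phi}_{nd}$ this is the special case $w=s_\alpha$ of the first assertion (equivalently, the basic inclusion with $\gamma=\alpha$, using $0\in\Lambda_\alpha$). If $\alpha\in\mathring{\Phi}_d$, then $\alpha/2\in\mathring{\Phi}_{nd}$, so $0\in\Lambda_{\alpha/2}$, and since $(\alpha,(\alpha/2)^\vee)=4$ we get $s_{\alpha/2}(\alpha)=\alpha-4(\alpha/2)=-\alpha$; the basic inclusion with $\gamma=\alpha/2$ then gives $\Lambda_\alpha\subseteq\Lambda_{-\alpha}$, and symmetry gives equality. Combining the two, $\Lambda_\alpha=\Lambda_{-\alpha}=-\Lambda_\alpha$ for all $\alpha\in\mathring{\Phi}$.

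No step is a genuine obstacle here; the only points that need a moment of attention are the observation that $\mathring{W}$ is already generated by reflections in non-divisible roots---so that Definition~\ref{extda}(2) can be invoked at each step of the iteration---and the elementary computation $(\alpha,(\alpha/2)^\vee)=4$ used to handle divisible $\alpha$.
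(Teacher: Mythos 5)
Your proposal is correct and follows essentially the same route as the paper's own proof: the basic inclusion $\Lambda_\alpha\subseteq\Lambda_{s_\gamma(\alpha)}$ obtained by selecting $0\in\Lambda_\gamma$ for non-divisible $\gamma$, iterated over a word in the generators $\{s_\gamma:\gamma\in\mathring{\Phi}_{nd}\}$ of $\mathring{W}$ and reversed via $w^{-1}$; then $\Lambda_\alpha-2\Lambda_\alpha\subseteq\Lambda_{-\alpha}$ for the sign statement, with the divisible case handled through $s_{\alpha/2}$ exactly as in the paper. No gaps.
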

	In particular, if $\mathring{\Phi}$ is irreducible, 
	then an extension datum of type $(\mathring{\Phi},Y)$ consists of at most 4 different subsets, namely $\Lambda_{0},\Lambda_{s},\Lambda_{\ell},\Lambda_{d}$ defined in the obvious way. 
	By Theorem \ref{strthm} there exists an irreducible root system $\mathring{\Phi}$ and an extension datum $(\Lambda_{0},\Lambda_{s},\Lambda_{\ell},\Lambda_{d})$ of type $(\mathring{\Phi},Y)$ such that (up to isomorphism)
	\begin{equation*}\Phi=(0\oplus \Lambda_0) \cup \bigcup_{\alpha\in \mathring{\Phi}_s} (\alpha\oplus \Lambda_s) \cup  \bigcup_{\alpha\in \mathring{\Phi}_\ell} (\alpha\oplus \Lambda_\ell)\cup  \bigcup_{\alpha\in \mathring{\Phi}_d} (\alpha\oplus \Lambda_d)\end{equation*}
	From Definition \ref{extda}(1), the subsets $\Lambda_s,\Lambda_{\ell},\Lambda_d$ satisfy:\label{lambdainclusions}
	$$\Lambda_x+2\Lambda_x\subseteq \Lambda_x,\ \ \ x\in \{s,\ell\},\ \ \ \Lambda_s+\Lambda_\ell\subseteq \Lambda_s,\ \ \ \Lambda_\ell+m\Lambda_s\subseteq \Lambda_\ell,$$
	$$\Lambda_s+\Lambda_d\subseteq \Lambda_s,\ \ \ \Lambda_d+4\Lambda_s\subseteq \Lambda_d,\ \ \ \Lambda_\ell+\Lambda_d\subseteq \Lambda_\ell,\ \ \ \Lambda_d+2\Lambda_\ell\subseteq \Lambda_d,$$
	$$\Lambda_x+\Lambda_x\subseteq \Lambda_x \text{ if }\mr{\Phi}\text{ contains a triple $(\alpha,\beta,\alpha+\beta)$ of type }(x,x,x),\ \ x\in \{s,\ell\}.$$

	\subsection{Duality}\label{sectionduality} We now define the duality of affine reflection systems for real roots. This definition is motivated by the definition of dual in \cite[Section 1.3]{macdonald2003affine} for affine root systems. Given an affine reflection system  $(X,\Delta^{\text{re}},(\cdot,\cdot))$ and $\alpha\in \Delta^{\mathrm{re}},$ we define  the dual of $\alpha$ by $\alpha^\vee:=\frac{2\alpha}{(\alpha,\alpha)}.$ 
	
	\begin{defn}\label{defdualars}
		Let $\mathcal{A}=(X,\Delta^{\text{re}},(-,-))$ be a reduced affine reflection system. We define the dual of $\mathcal{A}$ by $\mathcal{A}^\vee:=(X^\vee,(\Delta^\vee)^{\text{re}},(-,-)^\vee)$ where $$X^\vee:=X,\quad (-,-)^\vee=(-,-),\quad \text{ and }(\Delta^\vee)^{\text{re}}:=\{\alpha^\vee:\alpha\in\Delta^{\text{re}}\}.$$ 
	\end{defn}
	\begin{rem}
		Since the radical of the form $(-,-)$ on $X$ is $X^0,$ the induced form on the quotient $X/X^0$ is non-degenerate. This in turn induces a non-degenerate form on the dual $(X/X^0)^*$ via the linear isomorphism $$\nu:X/X^0\to (X/X^0)^*, \quad x\mapsto \nu(x) \text{ and } \nu(x)(y):=(x,y)\ \ \forall x,y\in X/X^0$$ We can define a bilinear form $(-,-)^*$ on $X^*$ induced by the form on $(X/X^0)^*$ and declaring its radical to be $(X^0)^*.$ For a real root $\alpha\in \Phi$ we can define  $\alpha^*$ to be the unique element of $X^*$ such that $$\alpha^*(\alpha+X^0)=2,\ \ \alpha^*(X^0)=0.$$  Consequently, we can also define the dual $\mathcal{A}^\vee$ of $\mathcal{A}$ by $$X^\vee:=X^*,\quad (-,-)^\vee=(-,-)^*,\quad \text{ and }\Delta^\vee:=\{\alpha^*:\alpha\in\Delta\}.$$
		
		If $\alpha$ is real, then it is easy to see that we can identify $\alpha^*$ as the functional $x\mapsto 2\frac{(x,\alpha)}{(\alpha,\alpha)}.$ Hence \textit{we shall always assume the dual of an affine reflection system is defined by Definition \ref{defdualars}}. Note that as in the affine case, this definition does not extend to the non-reduced case since $\Lambda_d$ may not contain zero but it is necessary that $0\in \Lambda_s^\vee=\Lambda_d/2$ (see definition \ref{extda}).
	\end{rem}
	\begin{example}
		Let $\Phi$ be the affine reflection system of type $C_I$ and nullity $2$ defined by $$\Lambda_s=\bz\times \bz,\ \ \Lambda_\ell=\{(0,0),(1,0),(0,1)\}+2\bz\times 2\bz.$$
		Then, we have $\Lambda_s^\vee=(1/2)\Lambda_\ell$ and $\Lambda_\ell^\vee=\Lambda_s.$  After scaling, we can assume that $\Lambda_s^\vee=\Lambda_\ell$ and $\Lambda_\ell^\vee=2\Lambda_s.$ Note that the condition $\langle \Lambda_\ell\rangle=\Lambda_s$ becomes $2\langle \Lambda_s^\vee\rangle=\Lambda_\ell^\vee$ in the dual.
	\end{example}

	\subsection{Hermite normal form and lattices} A \textit{lattice} $L$ in a finite-dimensional real vector space $V$ is a free $\bz$ submodule of $V.$ The \textit{rank} of a lattice $r(L)$ is defined to be $r(L):=\dim_{\br}( \br\otimes_{\bz}L).$ A \textit{full lattice} is a lattice $L$ with $r(L)=\dim(V).$ If $\dim V=n,$ any lattice $L$ in $V$ has a \textit{Hermite normal basis} $\mathcal{B}=\{v_1,v_2,\dots,v_r\}$ such that the $r\times n$ matrix with rows $v_1,v_2,\dots,v_r$ is an upper triangular integer matrix and satisfies the following properties:
	\begin{enumerate}
		\item The leading coefficient (also called pivot) of a non-zero row is always strictly to the right of the leading coefficient of the row above,
		\item The elements below the pivot are all zero and the elements above the pivot are all non-negative and strictly less that the pivot.
	\end{enumerate}
	
	For more about lattices and related Hermite normal form, we refer to \cite{cohen1993course,micciancio2002complexity}. We only recall the facts that are necessary for our purpose. The following can be proved using the Gram--Schmidt orthogonalization; see also \cite[Chapter 1]{martinet2013perfect} and \cite{zong2021classification}.
	\begin{prop}\label{det}
		Let $L$ be a lattice in $\br^n$ of rank $r$ and $B=\{v_1,v_2,\dots,v_r\}$ be a basis of $L.$ Let $M_B$ be the $r\times n$ matrix whose $i$-th row is $v_i$ $i=1,2,\dots,r.$ Then $$\prod_{i=1}^r\|v_i^*\|=\sqrt{\det(M_B(M_B)^t)}\ ,$$ where 
		$\{v_1^*,v_2^*,\dots,v_r^*\}$ is the orthogonal basis of the $\br$-subspace spanned by $B$ obtained by applying the Gram--Schmidt orthogonalization to $B.$
	\end{prop}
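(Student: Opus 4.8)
The plan is to compare the Gram matrix of $B$ with that of the orthogonalized family $B^*=\{v_1^*,\dots,v_r^*\}$, using the fact that the Gram--Schmidt process changes the basis by a unitriangular matrix, so it does not affect the relevant determinant. First I would recall the defining recursion: $v_1^*=v_1$ and $v_i^*=v_i-\sum_{j<i}\mu_{ij}v_j^*$ with $\mu_{ij}=(v_i,v_j^*)/(v_j^*,v_j^*)$, which inverts to $v_i=v_i^*+\sum_{j<i}\mu_{ij}v_j^*$. Writing $M_{B^*}$ for the $r\times n$ matrix with rows $v_i^*$, this identity reads $M_B=T\,M_{B^*}$, where $T\in M_r(\br)$ is lower triangular with all diagonal entries equal to $1$; in particular $\det T=1$.

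Next, since the $v_i^*$ are pairwise orthogonal (and nonzero, as the $v_i$ are linearly independent), the $r\times r$ matrix $M_{B^*}M_{B^*}^t$ is diagonal with $i$-th diagonal entry $(v_i^*,v_i^*)=\|v_i^*\|^2$, hence $\det(M_{B^*}M_{B^*}^t)=\prod_{i=1}^r\|v_i^*\|^2$. Substituting $M_B=T\,M_{B^*}$ gives $M_B M_B^t=T\,(M_{B^*}M_{B^*}^t)\,T^t$, so by multiplicativity of the determinant and $\det T=\det T^t=1$ we obtain $\det(M_B M_B^t)=\prod_{i=1}^r\|v_i^*\|^2$. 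Both sides are non-negative (the left side because $M_B M_B^t$ is positive definite), so taking the non-negative square root yields the claimed identity.

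The argument is entirely routine linear algebra — it is just the classical Gramian identity — and I do not expect a genuine obstacle. The only subtlety worth flagging is that since $L$ has rank $r\le n$ the matrix $M_B$ need not be square, so one cannot use $\det M_B$ directly and must work with the Gram determinant $\det(M_B M_B^t)$; the unitriangular change-of-basis bookkeeping above is exactly what makes this harmless.
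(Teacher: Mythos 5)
Your proof is correct and follows exactly the route the paper indicates (the paper merely remarks that the identity "can be proved using the Gram--Schmidt orthogonalization" and cites references): factor $M_B=T\,M_{B^*}$ with $T$ lower unitriangular, note $M_{B^*}M_{B^*}^t$ is diagonal with entries $\|v_i^*\|^2$, and take determinants. Nothing is missing.
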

	Since any two bases of a lattice are related by a unimodular matrix, the right hand side of the above equality is independent of the basis chosen. We denote by $d(L)$ the determinant of $L$ which is the common value given by Proposition \ref{det}. Note that $d(L)=|\det(M_B)|$ if $L$ is a full rank lattice. The next Theorem holds in much more generality (see \cite[Chapter 12]{dummit2004abstract}). 
	
	\begin{thm}\label{thmfgabelian}
		Let $A$ be a free abelian group of finite rank $n$ and $B\neq 0$ be a subgroup of $A$. Then there exists a $\bz$-basis $\{a_1,a_2\dots,a_n\}$ of $A$, and integers $d_1,\dots,d_r$ with $1\le r\le n, 1\le d_1\mid d_2\mid \dots \mid d_r,$ such that $\{d_1a_1,d_2a_2,\dots ,d_ra_r\}$ is a $\bz$-basis for $B$. In particular, $B$ is a free abelian group of rank $r\le n$.
	\end{thm}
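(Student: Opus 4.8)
The plan is to prove Theorem~\ref{thmfgabelian} by induction on $n=r(A)$ via an extremal choice of a $\bz$-linear functional on $A$; this single argument delivers the freeness of $B$, the bound $r\le n$, and the divisibility chain $d_1\mid d_2\mid\cdots\mid d_r$ at once. Identifying $A$ with $\bz^n$, so that $\mathrm{Hom}_{\bz}(A,\bz)$ is free of rank $n$, I would first note that for each $\phi\in\mathrm{Hom}_{\bz}(A,\bz)$ the image $\phi(B)$ is a subgroup of $\bz$, hence $\phi(B)=d_\phi\bz$ for a unique $d_\phi\ge 0$, and that $d_\phi>0$ for some $\phi$ since $B\neq 0$. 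Let $d_1$ be the smallest positive value occurring among the $d_\phi$, attained by some $\phi_1$, and fix $b_1\in B$ with $\phi_1(b_1)=d_1$.

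The first key step is the claim that $d_1\mid\psi(b_1)$ for every $\psi\in\mathrm{Hom}_{\bz}(A,\bz)$: if $d':=\gcd(d_1,\psi(b_1))<d_1$, write $d'=s d_1+t\psi(b_1)$ by B\'ezout, so that $(s\phi_1+t\psi)(b_1)=d'$ lies in the nonzero subgroup $(s\phi_1+t\psi)(B)\subseteq\bz$, whose positive generator then divides $d'$ and is $<d_1$, contradicting minimality. Applying this to the coordinate functionals shows every coordinate of $b_1$ is divisible by $d_1$, so $b_1=d_1 a_1$ for some $a_1\in A$, and then $\phi_1(a_1)=1$; in particular $\phi_1$ is surjective. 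From $\phi_1(a_1)=1$ I get the internal direct sum decompositions $A=\bz a_1\oplus\ker\phi_1$ (write $x=\phi_1(x)a_1+(x-\phi_1(x)a_1)$) and, using $d_1\mid\phi_1(b)$ for $b\in B$, also $B=\bz b_1\oplus(B\cap\ker\phi_1)$. Since $A/\ker\phi_1\cong\bz$, the group $A':=\ker\phi_1$ is free of rank $n-1$ --- here I would first dispose of the freeness-and-rank statement on its own by the standard easy induction that projects $\bz^n\to\bz^{n-1}$ (forgetting the last coordinate) and splits the resulting short exact sequence using freeness of the image --- and $B':=B\cap A'$ is free of rank $\le n-1$.

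Now apply the induction hypothesis to $B'\subseteq A'$: if $B'=0$ take $r=1$ and stop; otherwise there are a basis $a_2,\dots,a_n$ of $A'$ and integers $1\le d_2\mid d_3\mid\cdots\mid d_r$ with $\{d_2 a_2,\dots,d_r a_r\}$ a basis of $B'$. Then $\{a_1,\dots,a_n\}$ is a basis of $A$, and $\{d_1 a_1,d_2 a_2,\dots,d_r a_r\}$ generates $B=\bz b_1\oplus B'$ and is $\bz$-linearly independent (scalar multiples of distinct basis vectors), hence a basis of $B$; in particular $B$ is free of rank $r\le n$. Finally, to see $d_1\mid d_2$, take the functional $\psi$ with $\psi(a_1)=\psi(a_2)=1$ and $\psi(a_i)=0$ for $i\ge 3$: then $\psi(B)\supseteq d_1\bz+d_2\bz=\gcd(d_1,d_2)\bz$, so the positive generator of $\psi(B)$ divides $\gcd(d_1,d_2)\mid d_1$, and minimality of $d_1$ forces it to equal $d_1$, whence $d_1\mid d_2$.

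The main obstacle --- really the only non-routine point --- is keeping the quantifier in ``$d_1$ is minimal'' ranging over \emph{all} functionals rather than only the coordinate ones, since that single minimality property is what simultaneously yields $d_1\mid\psi(b_1)$ (so that $b_1$ is genuinely $d_1$ times a lattice vector) and $d_1\mid d_2$ (the start of the divisibility chain). The splittings, the linear independence of $\{d_i a_i\}$, and the base case $n=1$ (subgroups of $\bz$ are exactly the $d\bz$) are immediate. As an alternative route one may instead pick finitely many generators of $B$ (possible as $\bz^n$ is Noetherian), form the integer matrix $M$ of their coordinates, and invoke the Smith normal form $UMV=\mathrm{diag}(d_1,\dots,d_r,0,\dots,0)$ with $d_1\mid\cdots\mid d_r$, $U\in\mathrm{GL}_n(\bz)$, $V\in\mathrm{GL}_m(\bz)$: the columns of $U^{-1}$ furnish the required basis of $A$, though the elementary-divisor input is in substance the same argument (compare \cite[Chapter~12]{dummit2004abstract}).
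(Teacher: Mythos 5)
Your argument is correct and complete: the extremal choice of $d_1$ over \emph{all} functionals, the B\'ezout step giving $d_1\mid\psi(b_1)$, the two splittings $A=\bz a_1\oplus\ker\phi_1$ and $B=\bz b_1\oplus(B\cap\ker\phi_1)$, and the final functional $\psi(a_1)=\psi(a_2)=1$ forcing $d_1\mid d_2$ are all sound, and you correctly flag the one point that needs care (minimality must range over all of $\mathrm{Hom}_{\bz}(A,\bz)$, not just coordinate functionals). The paper itself gives no proof of this statement --- it is quoted as the standard stacked-bases/elementary-divisor theorem with a citation to \cite[Chapter 12]{dummit2004abstract} --- so there is nothing to compare against; your proof is essentially the textbook argument that the citation points to, and either it or your alternative Smith-normal-form route would serve equally well.
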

	\begin{cor}
		If $L_1\subseteq L_2$ are lattices in $\br^n$, then $d(L_2)\mid d(L_1).$
	\end{cor}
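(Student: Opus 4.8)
The plan is to read the divisibility off the Smith-type normal form provided by Theorem \ref{thmfgabelian}, converting it into a statement about determinants via Proposition \ref{det}. First I would record the (necessary) hypothesis that $r(L_1)=r(L_2)$; this holds automatically in the cases used later in the paper, since there all the relevant lattices are full lattices in the same space $Y$. Replacing $\br^n$ by $W:=\operatorname{span}_{\br}(L_2)$ changes nothing, because the value $d(L)$ from Proposition \ref{det} depends only on the lattice together with the restriction of the ambient inner product, not on the chosen basis or on the embedding of $W$; so I may assume $L_2$ is a full lattice in $W\cong\br^{r}$ with $r=r(L_2)=r(L_1)$.

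Next I would apply Theorem \ref{thmfgabelian} with $A=L_2$, a free abelian group of rank $r$, and $B=L_1$: there is a $\bz$-basis $\{a_1,\dots,a_r\}$ of $L_2$ and positive integers $d_1\mid d_2\mid\cdots\mid d_r$ — exactly $r$ of them, since $r(L_1)=r$ — such that $\{d_1a_1,\dots,d_ra_r\}$ is a $\bz$-basis of $L_1$. Let $M$ be the $r\times r$ matrix with $i$-th row $a_i$, a basis matrix for $L_2$, and put $D=\operatorname{diag}(d_1,\dots,d_r)$, so that $DM$ is a basis matrix for $L_1$. Since $D$ is diagonal, hence symmetric, Proposition \ref{det} gives
\[ d(L_1)^2=\det\bigl((DM)(DM)^t\bigr)=(\det D)^2\det\bigl(MM^t\bigr)=(d_1\cdots d_r)^2\,d(L_2)^2, \]
whence $d(L_1)=(d_1\cdots d_r)\,d(L_2)$ with $d_1\cdots d_r=[L_2:L_1]\in\bn$. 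Therefore $d(L_2)\mid d(L_1)$.

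There is essentially no obstacle here; the argument is routine once Theorem \ref{thmfgabelian} is in hand. The only point that needs a line of care is that the quantity $d(L)$ in Proposition \ref{det} is the Gram determinant $\sqrt{\det(M_BM_B^t)}$ rather than a naive $|\det M_B|$ (which would only make sense for full lattices), so the scaling matrix $D$ must be pulled out of the Gram matrix on both sides as in the display above rather than out of a single determinant — this is immediate because $D$ is symmetric and commutes past the transpose.
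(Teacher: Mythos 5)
Your argument is correct and is exactly the route the paper intends: it cites Proposition \ref{det} and Theorem \ref{thmfgabelian} without further detail, and your computation $d(L_1)=(d_1\cdots d_r)\,d(L_2)$ via the Gram matrix is the natural way to fill that in. Your observation that the statement implicitly requires $r(L_1)=r(L_2)$ (as holds in all of the paper's applications) is a fair and worthwhile clarification.
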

	\begin{proof}
		Follows from Proposition \ref{det} and Theorem \ref{thmfgabelian}.
	\end{proof}
	\begin{rem}
		Let $L$ be a lattice and $L'$ be a full rank sublattice of $L.$ Then $L=L'$ if and only if $d(L)=d(L').$ Moreover, two different sublattices of same determinant are incomparable with respect to $``\subseteq"$ relation.
	\end{rem}

	\noindent 
	We end this Section with a couple of Lemmas. The proof of the first Lemma is easy and we skip the details. The second Lemma plays a crucial role in the main Theorem. For a subgroup $H$ of a group $G,$ if $A$ is a union of cosets of $H$ in $G,$ we say $S$ is a union of cosets in $A/H$ to mean that $S$ is a union of cosets of $H$ in $G$ such that $S\subseteq A$.
	
	\begin{lem}\label{lemmaximalsubgroup}
		Let $L$ be a lattice in a finite-dimensional real vector space and $M$ be a maximal sublattice of $L$ so that $d(M)/d(L)=p$ for some prime $p.$ Then $qL\subseteq M$ for some prime $q$ if and only if $p=q.$
	\end{lem}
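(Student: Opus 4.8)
The plan is to reduce everything to the structure of the finite abelian quotient $L/M$.

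First I would record that a maximal sublattice $M$ of $L$ is necessarily of full rank. Indeed, if $r(M)<r(L)$, consider the saturation $\overline M=(\operatorname{span}_{\br}M)\cap L$, which is a sublattice with $r(\overline M)=r(M)<r(L)$, so $\overline M\subsetneq L$; if $M\subsetneq\overline M$ this already produces a sublattice strictly between $M$ and $L$, contradicting maximality, and otherwise $L/M=L/\overline M$ is a nonzero free abelian group, which has a proper nonzero subgroup whose preimage in $L$ is a sublattice strictly between $M$ and $L$ — again contradicting maximality. Hence $r(M)=r(L)$ and $L/M$ is finite. Next I would apply Theorem \ref{thmfgabelian} to $M\subseteq L$ to obtain a $\bz$-basis $a_1,\dots,a_n$ of $L$ with $d_1a_1,\dots,d_na_n$ a basis of $M$; then Proposition \ref{det} gives $d(M)/d(L)=\prod_i d_i=[L:M]$, using that Gram--Schmidt applied to $d_1a_1,\dots,d_na_n$ produces $d_1a_1^\ast,\dots,d_na_n^\ast$. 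So by hypothesis $[L:M]=p$, and since $p$ is prime, $L/M\cong\bz/p\bz$.

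Granting this, the implication $q=p\Rightarrow qL\subseteq M$ is immediate: for any $x\in L$ the coset $x+M$ has order dividing $p$ in the cyclic group $L/M$, so $px\in M$, whence $pL\subseteq M$.

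For the reverse implication I would show that $qL\subseteq M$ forces $q=p$. Since $qL\subseteq M\subseteq L$, reducing mod $M$ yields a surjection $L/qL\twoheadrightarrow L/M$; as $L/qL\cong(\bz/q\bz)^{n}$ is an elementary abelian $q$-group, so is its quotient $L/M$, and therefore its order $p$ is a power of $q$, forcing $q=p$ because both are prime. I do not anticipate a genuine obstacle here: the substantive points are the identification $d(M)/d(L)=[L:M]$, which is essentially packaged in Proposition \ref{det} and Theorem \ref{thmfgabelian}, together with the full-rank observation above and the elementary remark that a group of prime order $p$ can occur as a quotient of an elementary abelian $q$-group only when $q=p$.
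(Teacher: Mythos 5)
Your proof is correct and follows essentially the same route as the paper's (which rests on Theorem \ref{thmfgabelian} together with the determinant--index relation from Proposition \ref{det}): the paper phrases the forward direction as $d(M)\mid d(qL)=q^{\,\mathrm{rank}(L)}d(L)$ hence $p\mid q^{\mathrm{rank}(L)}$, and the converse via an explicit basis $\{v_1,\dots,pv_i,\dots,v_n\}$ of $M$, which is the same content as your identification $L/M\cong\bz/p\bz$ and the surjection $L/qL\twoheadrightarrow L/M$. The only addition on your side is the justification that a maximal sublattice has full rank, which the paper asserts without proof; that is a worthwhile detail but not a different method.
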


	\begin{lem}\label{keylemmalattice}
		Let $\Lambda$ be a lattice in a finite-dimensional real vector space $V.$ Assume that $$L:=\bigcup_{i=0}^r(a_i+2\Lambda),\ \ \ L':=\bigcup_{i=0}^k(b_i+2\Lambda),\ \ \ \ a_0=b_0=0$$ are two union of cosets in $\Lambda/2\Lambda.$ Let $H$ be a maximal sublattice of $\Lambda$  such that $d(H)/d(\Lambda)\neq 2$ and $H\cap L\subseteq L'.$ Then $L\subseteq L'$ and $k\geq r.$
	\end{lem}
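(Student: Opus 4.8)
The plan is to leverage the hypothesis $d(H)/d(\Lambda)\neq 2$ to show that $H$ still meets every coset of $2\Lambda$ in $\Lambda$, and then to transfer the inclusion $H\cap L\subseteq L'$ one coset at a time. First I would record the structural fact that a maximal sublattice $H$ of $\Lambda$ automatically has full rank (a sublattice of strictly smaller rank is contained in a proper full-rank one, hence is never maximal), so that $\Lambda/H$ is a simple finite abelian group, necessarily cyclic of prime order $p=d(H)/d(\Lambda)$ --- this is exactly the setting of \cref{lemmaximalsubgroup}. The hypothesis then says precisely that $p$ is odd.

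The key step is the claim $H+2\Lambda=\Lambda$. I would prove this by maximality: $H+2\Lambda$ is a subgroup of $\Lambda$ lying between $H$ and $\Lambda$, so it equals one of the two. If $H+2\Lambda=H$, then $2\Lambda\subseteq H$, and $\Lambda/H$ is a quotient of the elementary abelian $2$-group $\Lambda/2\Lambda$; but a cyclic group of odd prime order is not such a quotient, a contradiction. Hence $H+2\Lambda=\Lambda$, which is equivalent to saying that the composite $H\hookrightarrow\Lambda\twoheadrightarrow\Lambda/2\Lambda$ is surjective, i.e.\ every coset $a+2\Lambda$ of $2\Lambda$ in $\Lambda$ contains an element of $H$.

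Given this, the remainder is bookkeeping with cosets. For each $i\in\{0,\dots,r\}$ I would pick $h_i\in H$ lying in the coset $a_i+2\Lambda$; then $h_i\in L$, so $h_i\in H\cap L\subseteq L'$, hence $h_i$ lies in some coset $b_j+2\Lambda$, and since two cosets of $2\Lambda$ coincide as soon as they share a point this forces $a_i+2\Lambda=b_j+2\Lambda\subseteq L'$. Letting $i$ range gives $L\subseteq L'$. Finally, reading both decompositions as being into pairwise distinct cosets (the natural convention), the inclusion $L\subseteq L'$ says the set of cosets comprising $L$ is contained in the set comprising $L'$, whence $r+1\le k+1$, i.e.\ $k\ge r$.

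I do not anticipate a serious obstacle here: the only point demanding any care is the prime-index property of maximal sublattices, so that ``$d(H)/d(\Lambda)\neq 2$'' can legitimately be read as ``$H$ has odd prime index in $\Lambda$''. Once that is in place, the whole argument rests on the single elementary observation that reducing modulo $2\Lambda$ does not collapse a sublattice of odd prime index, and everything else is routine manipulation of cosets.
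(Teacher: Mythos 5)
Your proof is correct, and it reaches the same pivotal fact as the paper --- that $H$ meets every coset of $2\Lambda$ in $\Lambda$ --- but by a cleaner route. The paper identifies $\Lambda$ with $\bz^n$, writes down the Hermite normal basis of $H$ explicitly, and for each $a_i$ solves a Chinese Remainder system $x\equiv 0\ (\mathrm{mod}\ p)$, $x\equiv a_{ij}\ (\mathrm{mod}\ 2)$ coordinate by coordinate to manufacture a representative $a_i'\in H\cap(a_i+2\Lambda)$; your argument replaces this computation by the observation that $H\subseteq H+2\Lambda\subseteq\Lambda$ forces $H+2\Lambda=\Lambda$ by maximality, since $H+2\Lambda=H$ would make the odd-order cyclic group $\Lambda/H$ a quotient of the elementary abelian $2$-group $\Lambda/2\Lambda$. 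These are two faces of the same coprimality of $2$ and $p$, but your version is coordinate-free, avoids the Hermite normal form entirely, and makes visible exactly where the hypothesis $d(H)/d(\Lambda)\neq 2$ enters; the paper's version has the minor advantage of producing the representatives $a_i'$ explicitly, which is in the spirit of the Hermite-normal-form computations it performs later in Section \ref{secnul2}. Your preliminary remarks (a maximal sublattice has full rank, hence prime index $p=d(H)/d(\Lambda)$, which is the content of Lemma \ref{lemmaximalsubgroup}'s setting) and your reading of $k\ge r$ via distinct cosets match the paper's implicit conventions, so there is no gap.
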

	
	\begin{proof}
		Let $d(H)/d(\Lambda)=p.$ For simplicity we identify $\Lambda$ with $\bz^n$ and denote the standard basis vectors of $\bz^n$ by $\{e_1,e_2,\dots, e_n\}.$ Let $\{v_1,v_2,\dots, v_n\}$ be the Hermite normal basis of $H.$ Then there exist $k$ and non-negative integers $x_1,\dots,x_{k-1}$ satisfying $x_t<p$ for each $t$ and 
		$$v_j=\begin{cases}
			e_j+x_je_k &\text{ if } 1\le j< k,\\
			pe_k &\text{ if } j=k,\\
			e_j &\text{ if } k<j\le n.
		\end{cases}$$ 
		We shall show that $L\subseteq L'$ by showing that $H\cap L$ is a union of $r$ distinct cosets modulo $2\Lambda.$ It is clear that $$H\cap L=\bigcup_{i=0}^rH\cap(a_i+2\Lambda).$$ Fix arbitrary $i\in\{0,1,\cdots,r\}$ and let $a_i=(a_{ij})_j.$ Since $p\neq 2$, for each $j\le k$, by Chinese remainder Theorem there exists a unique solution (modulo $2p\bz$) of the system of equations given by $$x\equiv 0\ (\mathrm{mod}\ p),\ \ x\equiv a_{ij}\ (\mathrm{mod}\ 2).$$
		We call this solution $x_{ij}$ and define $a_i':=(a_{ij}')_j$ where $a_{ij}'=x_{ij}$ if $j\le k$ and $a_{ij}$ otherwise. Note that we have
		$$a_i'=\sum\limits_{\substack{j=1\\ j\neq k}}^n a_{ij}'v_j+\left(\frac{x_{ik}-\sum_{j=1}^{k-1}x_{ij}x_j}{p}\right)v_k\in H.$$
		Moreover, since $a_i'-a_i\in 2\Lambda$ it follows that 
		$(a_i'+H\cap 2\Lambda)\subseteq H\cap L\subseteq L'$ for $i=0,1,\dots,r$ and
		therefore $a_i'\in L'$ which implies $a_i'+2\Lambda\in L'.$ Consequently,
		$$L=\bigcup_{i=0}^r(a_i+2\Lambda)=\bigcup_{i=0}^r(a_i'+2\Lambda)\subseteq L'.$$
		This completes the proof.
	\end{proof}

	\section{Root subsystems of affine reflection systems}\label{secrootsubsystems}
	In this section we recall the definitions and elementary properties of root subsystems. Throughout this article, by root subsystem we shall always mean real root subsystem.
	\subsection{} Let $\Phi$ an affine reflection system. A non-empty subset $\Psi\subseteq \Phi$ is called a \textit{root subsystem} if $s_{\alpha}(\beta)\in \Psi$ for all $\alpha,\beta\in\Psi$. A root subsystem $\Psi$ is called a \textit{maximal root subsystem} if $\Psi\subseteq \Psi'\subsetneq \Phi$ implies $\Psi=\Psi'$ for all root subsystems $\Psi'$ of $\Phi.$ We call a root subsystem \textit{closed} if $\alpha,\beta\in \Psi$ and $\alpha+\beta\in \Phi$ implies $\alpha+\beta\in \Psi.$
	Given a root subsystem $\Psi,$ define 
	$$\mathrm{Gr}(\Psi):=\{\alpha \in  \mathring{\Phi}: \exists \ y\in\Lambda_{\a} \text{ such that } \alpha\oplus y\in \Psi\},$$ $$Y_\alpha(\Psi):=\{y\in \Lambda_{\alpha}: \alpha\oplus y\in \Psi\},\ \alpha\in \mathrm{Gr}(\Psi).$$
	
	
	\noindent Moreover, for a root subsystem $\mr{\Psi}$ of $\mr{\Phi}$ we define $\widehat{\mr{\Psi}}:=\{\alpha\oplus \Lambda_\alpha:\alpha\in\mr{\Psi}\}.$  Note that $\mr{\Psi}$ is closed in $\mr{\Phi}$ if and only if $\widehat{\mr{\Psi}}$ is closed in $\Phi.$ The proof of the following Lemma can be found in \cite{kus2021borel}.
	\begin{lem}\label{lemm1}Let $\Phi$ an affine reflection system and $\Psi$ be a root subsystem of $\Phi$. Then $\mathrm{Gr}(\Psi)\subseteq \mathring{\Phi}$ is a root subsystem of $\mathring{\Phi}\cup \{0\}$. Moreover, we have
		$$Y_\beta(\Psi)-(\beta,\alpha^{\vee})Y_\alpha(\Psi)\subseteq Y_{s_{\alpha}(\beta)}(\Psi),\ \ \forall \alpha,\beta\in \mathrm{Gr}(\Psi).\qed$$
	\end{lem}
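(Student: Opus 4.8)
The plan is to reduce both assertions to one explicit computation, namely the formula describing how a reflection acts on a root once we record its gradient. Recall from Theorem \ref{strthm} and \eqref{consaff} that $X=V\oplus Y$ as a direct sum, that $\Phi=\Delta^{\mathrm{re}}=\bigcup_{\alpha\in\mathring\Phi}\alpha\oplus\Lambda_\alpha$, and that the form satisfies $(v_1\oplus y_1,v_2\oplus y_2)_X=(v_1,v_2)$, so $Y=X^0$ lies in its radical. Hence every element of $\Psi\subseteq\Phi$ is uniquely of the form $\alpha\oplus x$ with $\alpha\in\mathring\Phi$ and $x\in\Lambda_\alpha$, and I will freely speak of the $V$-component and the $Y$-component of an element of $X$, these being well defined by directness of the sum.

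The key step is: for $\alpha\oplus x,\ \beta\oplus y\in\Phi$ with $\alpha,\beta\in\mathring\Phi$, since $(\alpha\oplus x,\alpha\oplus x)_X=(\alpha,\alpha)\neq0$ we have $(\alpha\oplus x)^\vee=\tfrac{2}{(\alpha,\alpha)}(\alpha\oplus x)$, whence $(\beta\oplus y,(\alpha\oplus x)^\vee)_X=\tfrac{2(\beta,\alpha)}{(\alpha,\alpha)}=(\beta,\alpha^\vee)$, and therefore
\[
s_{\alpha\oplus x}(\beta\oplus y)=(\beta\oplus y)-(\beta,\alpha^\vee)(\alpha\oplus x)=s_\alpha(\beta)\oplus\bigl(y-(\beta,\alpha^\vee)x\bigr).
\]
From this formula everything follows. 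For the first claim, given $\alpha,\beta\in\mathrm{Gr}(\Psi)$, choose $x\in Y_\alpha(\Psi)$ and $y\in Y_\beta(\Psi)$; then $s_{\alpha\oplus x}(\beta\oplus y)\in\Psi$ because $\Psi$ is a root subsystem, and by the displayed formula its $V$-component is $s_\alpha(\beta)$, so $s_\alpha(\beta)\in\mathrm{Gr}(\Psi)$; as $s_\alpha(\beta)$ is the reflection of a root it is again a (nonzero) element of $\mathring\Phi$, so $\mathrm{Gr}(\Psi)$ is nonempty and closed under reflections, i.e. a root subsystem of $\mathring\Phi\subseteq\mathring\Phi\cup\{0\}$. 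For the "moreover" part, the same membership $s_\alpha(\beta)\oplus\bigl(y-(\beta,\alpha^\vee)x\bigr)\in\Psi$ together with uniqueness of the $V\oplus Y$ decomposition shows $y-(\beta,\alpha^\vee)x\in Y_{s_\alpha(\beta)}(\Psi)$; letting $x$ range over $Y_\alpha(\Psi)$ and $y$ over $Y_\beta(\Psi)$ yields exactly $Y_\beta(\Psi)-(\beta,\alpha^\vee)Y_\alpha(\Psi)\subseteq Y_{s_\alpha(\beta)}(\Psi)$.

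There is essentially no obstacle: the only points requiring attention are that $(\cdot,\cdot)_X$ annihilates $Y$, so that the reflection splits coordinatewise as above, and that $s_\alpha(\beta)\neq0$ for $\alpha,\beta\in\mathring\Phi$, so that $\mathrm{Gr}(\Psi)$ genuinely lands in $\mathring\Phi$; both are immediate from the definitions and Theorem \ref{strthm}. (As the statement records, this is also precisely the content of \cite{kus2021borel}, which one may simply cite.)
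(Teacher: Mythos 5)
Your proof is correct and is exactly the standard argument: the paper itself omits the proof (citing \cite{kus2021borel}), and the coordinatewise reflection formula $s_{\alpha\oplus x}(\beta\oplus y)=s_\alpha(\beta)\oplus(y-(\beta,\alpha^\vee)x)$ you derive is the same identity the paper uses implicitly throughout (e.g.\ in the proof of Proposition \ref{proprednotoftypeB}). Both claims of the lemma follow immediately from it, as you show.
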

	Given a root subsystem $\Psi,$ we define $W_\Psi$ be the subgroup of $\mathcal{W}$ generated by reflections $s_\alpha,\alpha\in\Psi.$ The map $\Psi\mapsto \mathcal{W}_\Psi$ is a bijection between the root subsystems of $\Phi$ and the subgroups of $\mathcal{W}$ generated by reflections, the inverse map being $\mathcal{W}'\mapsto\{\alpha\in \Phi:s_\alpha\in\mathcal{W}'\}$.

	\subsection{}\label{sectiondefinitionp} If $\mr{\Phi}$ is reduced, we introduce a $\bz$--linear function 
	$$p: \mathrm{Gr}(\Psi)\rightarrow \bigcup_{\a\in \mathring{\Phi}}\Lambda_{\a},\ \alpha\mapsto p_{\alpha}$$
	as follows. By Lemma \ref{lemm1} and \cite[Theorem 10.1]{humphreys2012introduction} we can choose a simple system $\Pi\subseteq \mathrm{Gr}(\Psi)$ and arbitrary $p_{\gamma}\in Y_{\gamma}(\Psi)$ for each $\gamma\in \Pi$. We extend this $\bz$--linearly and obtain
	\begin{equation}\label{keyequationp}
		p_{\beta}-(\beta,\alpha^{\vee})p_{\alpha}=p_{s_{\alpha}(\beta)}.
	\end{equation} 
	Now it is easy to see with Equation \eqref{keyequationp} that $p_{\alpha}\in Y_\alpha(\Psi)$ for all 
	$\alpha\in \mathrm{Gr}(\Psi)$. We set $Y_\alpha'(\Psi)=Y_\alpha(\Psi)-p_{\alpha}$ for $\alpha\in \mathrm{Gr}(\Psi)$ and observe that $0\in Y_\alpha'(\Psi)$ for all $\alpha\in \mathrm{Gr}(\Psi)$. 
	
	\medskip
	\noindent\textit{For a root subsystem $\Psi$ of an affine reflection system $\Phi,$ we reserve the notation $Y_\alpha(\Psi)$ (resp. $Y_\alpha'(\Psi)$) or sometime only $Y_\alpha$ (resp. $Y_\alpha'$) for $\alpha\in \mathrm{Gr}(\Psi)$ if the underlying $\Psi$ and the map $p$ are understood.} The next Proposition can be found in \cite{kus2021borel}. 
	
	\begin{prop}\label{propprimeised}Let $\Psi$ be a root subsystem of an affine reflection system $\Phi$. Then the collection $(Y_0':=0, Y_\alpha', \alpha\in \mathrm{Gr}(\Psi))$ is an extension datum. Moreover, the affine reflection system constructed from $(Y_\alpha', \alpha\in \mathrm{Gr}(\Psi)\cup \{0\})$ and $\mathrm{Gr}(\Psi)$ as in \eqref{consaff} is isomorphic to $\Psi\cup\{0\}$.
	\end{prop}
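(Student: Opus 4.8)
The strategy is to verify the three axioms of Definition~\ref{extda} for the collection $\mathcal{Y}'=(Y_0'=0,\ Y_\alpha'=Y_\alpha'(\Psi),\ \alpha\in\mathrm{Gr}(\Psi))$ relative to the finite root system $\mathrm{Gr}(\Psi)$ and the space $Y':=\mathrm{span}_{\mathbb{R}}\big(\bigcup_{\alpha\in\mathrm{Gr}(\Psi)\cup\{0\}}Y_\alpha'\big)$, and then to write down a concrete isomorphism between the affine reflection system $(X',\Delta',(\cdot,\cdot)')$ produced from $\mathcal{Y}'$ via \eqref{consaff} and the triple $(X_\Psi,\Psi\cup\{0\},(\cdot,\cdot)|_{X_\Psi})$, where $X_\Psi:=\mathrm{span}_{\mathbb{R}}\Psi$.

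By Lemma~\ref{lemm1} together with the standing hypothesis that $\mathring\Phi$ is reduced, $\mathrm{Gr}(\Psi)$ is a finite reduced root system, so $\mathrm{Gr}(\Psi)_{nd}=\mathrm{Gr}(\Psi)$ and there are no divisible roots. Axiom~(2) then reduces to $0\in Y_\alpha'$ for every $\alpha\in\mathrm{Gr}(\Psi)\cup\{0\}$, which holds because $p_\alpha\in Y_\alpha(\Psi)$ (the observation recorded after \eqref{keyequationp}), and axiom~(3) holds by the very definition of $Y'$. For axiom~(1) I would start from the inclusion $Y_\beta(\Psi)-(\beta,\alpha^\vee)Y_\alpha(\Psi)\subseteq Y_{s_\alpha(\beta)}(\Psi)$ of Lemma~\ref{lemm1}, substitute $Y_\gamma(\Psi)=Y_\gamma'(\Psi)+p_\gamma$ for $\gamma\in\{\alpha,\beta,s_\alpha(\beta)\}$, and use $p_\beta-(\beta,\alpha^\vee)p_\alpha=p_{s_\alpha(\beta)}$ from \eqref{keyequationp} to cancel all the translation terms; what remains is precisely $Y_\beta'(\Psi)-(\beta,\alpha^\vee)Y_\alpha'(\Psi)\subseteq Y_{s_\alpha(\beta)}'(\Psi)$. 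Hence $\mathcal{Y}'$ is an extension datum of type $(\mathrm{Gr}(\Psi),Y')$, and by Theorem~\ref{strthm} the triple $(X',\Delta',(\cdot,\cdot)')$ with $X'=V_\Psi\oplus Y'$, $V_\Psi:=\mathrm{span}_{\mathbb{R}}\mathrm{Gr}(\Psi)$, and $\Delta'=\bigcup_{\alpha\in\mathrm{Gr}(\Psi)\cup\{0\}}\alpha\oplus Y_\alpha'$ is an affine reflection system.

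For the final claim I would extend $p$ to the $\mathbb{R}$-linear map $\tilde p\colon V_\Psi\to Y$ determined by its values on a simple system; since $p$ is $\mathbb{Z}$-linear and every root of $\mathrm{Gr}(\Psi)$ lies in the $\mathbb{Z}$-span of a simple system, $\tilde p(\alpha)=p_\alpha$ for all $\alpha\in\mathrm{Gr}(\Psi)$. Define $\Phi\colon X'\to X$ by $\Phi(v\oplus y')=v\oplus(y'+\tilde p(v))$; this is $\mathbb{R}$-linear and injective. A direct check gives $\Phi(\alpha\oplus y')=\alpha\oplus(y'+p_\alpha)\in\Psi$ whenever $\alpha\in\mathrm{Gr}(\Psi)$ and $y'\in Y_\alpha'$, $\Phi(0)=0$, and conversely $\alpha\oplus y=\Phi(\alpha\oplus(y-p_\alpha))$ for $\alpha\oplus y\in\Psi$, so $\Phi(\Delta')=\Psi\cup\{0\}$; since $\Delta'$ spans $X'$, taking spans yields $\Phi(X')=X_\Psi$, whence $\Phi$ is a linear isomorphism onto $X_\Psi$. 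Finally, because the form in \eqref{consaff} only reads off the $V$-component, $\Phi$ preserves the bilinear forms, so it is an isomorphism of affine reflection systems $(X',\Delta',(\cdot,\cdot)')\xrightarrow{\ \sim\ }(X_\Psi,\Psi\cup\{0\},(\cdot,\cdot)|_{X_\Psi})$ (the right-hand triple being an affine reflection system as the $\Phi$-transport of the left-hand one), as required. The only point requiring care is the bookkeeping of the translations by $p_\alpha$: once \eqref{keyequationp} is invoked, both the verification of axiom~(1) and the well-definedness of $\tilde p$ are routine, and I do not expect any substantive obstacle beyond that.
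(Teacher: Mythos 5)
Your proof is correct. The paper itself offers no argument for this Proposition — it is quoted from \cite{kus2021borel} — and your verification of the axioms of Definition \ref{extda} (Lemma \ref{lemm1} plus cancellation of the translations $p_\gamma$ via \eqref{keyequationp}), followed by the explicit shear $v\oplus y'\mapsto v\oplus\bigl(y'+\tilde p(v)\bigr)$, is exactly the expected argument. The one caveat is that $p$ and hence $Y_\alpha'$ are only set up in Section \ref{sectiondefinitionp} under the hypothesis that $\mathring{\Phi}$ is reduced, so your standing reducedness assumption matches the scope in which the notation is defined; in the non-reduced case the divisible roots (where $0$ need not lie in $\Lambda_d$) require the separate bookkeeping of Section \ref{secnonred}.
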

	Let $\Psi=\{\alpha\oplus Y_\alpha(\Psi):\alpha\in \mathrm{Gr}(\Psi)\}$ be a root subsystem of $\Phi.$ Then the dual $\Psi^\vee$ is given by 
	$\Psi^\vee=\{\gamma^\vee:\gamma\in \Psi\}.$ Since $(s_\alpha(\beta))^\vee=s_{\alpha^\vee}(\beta^\vee)$ holds for all $\alpha,\beta\in \Psi,$ it follows that $\Psi$ is a (maximal) root subsystem of $\Phi$ if and only if $\Psi^\vee$ is a (maximal) root subsystem of $\Phi^\vee.$ Moreover, if $p:\mathrm{Gr}(\Psi)\to \Lambda_s$ is a $\bz$-linear funcion such that $p_\alpha\in Y_\alpha(\Psi)$ for all $\alpha\in \mathrm{Gr}(\Psi),$ then  $p^\vee_{\gamma^\vee}:=\frac{2p_\gamma}{(\gamma,\gamma)}$ satisfies $p^\vee_{\gamma^\vee}\in Y_{\gamma^\vee}(\Psi^\vee)$ for all $\gamma^\vee\in \mathrm{Gr}(\Psi)^\vee.$

	Now let $\Phi=\Phi_1\sqcup \Phi_2\sqcup\cdots \sqcup\Phi_r$ be the decomposition of $\Phi$ into irreducible components. Let $\Psi\subseteq \Phi$ be a subset and let 
	\begin{equation}\label{decomppsi}
		\Psi=\Psi_1\sqcup \Psi_2\sqcup \dots \sqcup \Psi_r,\ \ \ \Psi_i=\Psi\cap\Phi_i,\ \ i=1,2,\cdots,r.
	\end{equation}
	We end this subsection with the following easy Lemma.

	\begin{lem}\label{lemassumeirreducible}
		Let $\Psi$ be a subset of $\Phi$ and the decomposition be as in Equation \eqref{decomppsi}. Then
		\begin{enumerate}
			\item $\Psi$ is a root subsystem if and only if $\Psi_i$ is a root subsystem for all $i=1,2,\cdots,r.$
			\item  $\Psi$ is a maximal root subsystem if and only if there exists unique $i$ so that $\Psi_i$ is a maximal root subsystem of $\Phi_i$ and $\Psi_j=\Phi_j$ for all $j\neq i.$
		\end{enumerate}
	\end{lem}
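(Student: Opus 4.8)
The plan is to prove both statements by reducing to the irreducible setting and using the behaviour of reflections on the components.

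\textbf{Part (1).} First I would observe that the reflection $s_\alpha$ for $\alpha\in\Phi_i$ acts as the identity on $X_j$ for $j\neq i$ and stabilises $\Phi_j$, since $(\Phi_i,\Phi_j)=0$ forces $(x,\alpha^\vee)=0$ for all $x\in X_j$. Hence for $\alpha,\beta\in\Psi$ with $\alpha\in\Psi_i$: if $\beta\in\Psi_i$ then $s_\alpha(\beta)\in\Phi_i$, and if $\beta\in\Psi_j$ with $j\neq i$ then $s_\alpha(\beta)=\beta\in\Psi_j$. So $s_\alpha(\beta)\in\Psi$ for all $\alpha,\beta\in\Psi$ if and only if $s_\alpha(\beta)\in\Psi_i$ for all $\alpha,\beta\in\Psi_i$ and each $i$; that is exactly the statement that $\Psi$ is a root subsystem iff every $\Psi_i$ is a root subsystem of $\Phi_i$. (One should also note $\Psi_i$ is automatically non-empty once one fixes the convention that a root subsystem contains $0$, or else allow empty pieces; either way the argument is unaffected.)

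\textbf{Part (2).} For the maximality statement I would argue in both directions. For the ``if'' direction, suppose $\Psi_i\subsetneq\Phi_i$ is maximal and $\Psi_j=\Phi_j$ for $j\neq i$; if $\Psi\subseteq\Psi'\subsetneq\Phi$ is a root subsystem, then by Part (1) each $\Psi'\cap\Phi_k$ is a root subsystem of $\Phi_k$ containing $\Psi_k$. For $k\neq i$ this gives $\Phi_k=\Psi_k\subseteq\Psi'\cap\Phi_k\subseteq\Phi_k$, so $\Psi'\cap\Phi_k=\Phi_k$; since $\Psi'\neq\Phi$ we must have $\Psi'\cap\Phi_i\subsetneq\Phi_i$, and maximality of $\Psi_i$ forces $\Psi'\cap\Phi_i=\Psi_i$, hence $\Psi'=\Psi$. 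For the ``only if'' direction, suppose $\Psi$ is a maximal root subsystem. Since $\Psi\subsetneq\Phi$, there is some index $i$ with $\Psi_i\subsetneq\Phi_i$. Define $\Psi'$ by replacing $\Psi_j$ with $\Phi_j$ for every $j\neq i$ (keeping $\Psi_i$ in place); by Part (1) this $\Psi'$ is a root subsystem containing $\Psi$, and it is proper since $\Psi_i\subsetneq\Phi_i$. Maximality of $\Psi$ gives $\Psi=\Psi'$, so $\Psi_j=\Phi_j$ for all $j\neq i$ and in particular $i$ is the \emph{unique} index with $\Psi_i\neq\Phi_i$. Finally, to see $\Psi_i$ is maximal in $\Phi_i$: if $\Psi_i\subseteq\Theta\subsetneq\Phi_i$ is a root subsystem, then $\Theta\sqcup\bigsqcup_{j\neq i}\Phi_j$ is a root subsystem of $\Phi$ strictly between $\Psi$ and $\Phi$ unless $\Theta=\Psi_i$, so maximality of $\Psi$ forces $\Theta=\Psi_i$.

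I do not anticipate a genuine obstacle here; the only point requiring a little care is the key observation that $s_\alpha$ acts trivially on the orthogonal complement of its own component, which is immediate from the definition of the reflection and the orthogonality of distinct irreducible components, and the bookkeeping of which components of $\Psi'$ equal the full $\Phi_j$. The whole proof is of the ``easy Lemma'' type flagged in the text, and I would present it in roughly the two short paragraphs above.
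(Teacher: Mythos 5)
Your proof is correct and is exactly the standard argument the paper has in mind; the paper in fact states this as an ``easy Lemma'' and omits the proof entirely, so there is nothing to diverge from. The one point you rightly flag in passing — that the paper's definition of root subsystem requires non-emptiness, so part (1) implicitly allows empty components (and in part (2) maximality automatically forces $\Psi_i\neq\emptyset$, since otherwise $\{\pm\alpha\}\sqcup\bigsqcup_{j\neq i}\Phi_j$ would be a strictly larger proper root subsystem) — is handled adequately.
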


	\begin{rem}
		To characterize the maximal root subsystems of an affine reflection system, it is enough to characterize the maximal root subsystems of an affine reflection system with irreducible gradient. 
	\end{rem}
	
	\subsection{}\label{keysection} 
	From now on we shall only be interested in the irreducible maximal root subsystems of $\Phi.$ \textit{We shall refer this section throughout the article.} Let $\Psi$ be an irreducible root subsystem  of a reduced affine reflection system $\Phi.$ By Proposition \ref{propsametyperoots} and Proposition \ref{propprimeised} we can define $Y_x'=Y_x'(\Psi),x\in\{s,\ell\}.$
	Note that $Y_x',\ x\in\{s,\ell\}$ satisfies 
	\begin{equation}\label{pointedreflectionsubspaces}
		0\in Y_x',\quad Y_x'=-Y_x',\quad Y_x'+2Y_x'\subseteq Y_x',
	\end{equation}
	\begin{equation}\label{primekeyeqn}
		Y_s'+Y_\ell'\subseteq Y_s',\qquad Y_\ell'+mY_s'\subseteq Y_\ell'.
	\end{equation}
	Thus $Y_\ell'$ (resp. $Y_s'$) is a union of cosets of $\langle \Lambda_s \rangle$ modulo $\langle mY_s'\rangle$ (resp. $\langle Y_\ell'\rangle$) containing $0$. Moreover, if $\mathrm{Gr}(\Psi)$ contains a triple $(\alpha,\beta,\alpha+\beta)$ of type $(x,x,x),\ x\in \{s,\ell\}$ then we also have 
	\begin{equation}\label{primeextreaeqns}
		Y_x'+Y_x'\subseteq Y_x',\ \ \ x\in \{s,\ell\}.
	\end{equation}
	In this case $Y_x'$ is an additive subgroup of $Y.$ Conversely, assume that 
	\begin{itemize}
		\item $W_x,\ x\in \{s,\ell\}$ are subspaces of $Y$ satisfying Equations \eqref{pointedreflectionsubspaces} and \eqref{primekeyeqn},
		\item $\mr{\Psi}$ is a root subsystem of $\mr{\Phi},$  $p:\mr{\Psi}\to Y$ is a $\bz$-linear function defined as in Section \ref{sectiondefinitionp} such that $p_\alpha+W_x\subseteq \Lambda_x\ \forall\alpha\in \mr{\Psi}_x,$
	\end{itemize}
	
	\noindent then $\Psi:=\{\alpha\oplus(p_\alpha+W_x):\alpha\in \mr{\Psi}_x,\ x\in\{s,\ell\}\}$ is a root subsystem of $\Phi.$
	
	For a root subsystem $\mr{\Psi}$ of $\mr\Phi$ (including the non-reduced) and a function $p:\mr{\Psi}\to Y$, we say that the data $Y_s',Y_\ell',Y_d'$ define a root subsystem if $\Psi:=\{\alpha\oplus (p_\alpha+Y_\alpha'):\alpha\in \mr{\Psi}\}$ is a root subsystem of $\Phi$ where $Y_\alpha'=Y_x'$ if $\alpha$ is of type $x,\ x\in\{s,\ell,d\}.$ We end this section with a couple of easy Lemmas without proof.
	\begin{lem}\label{redmaxrootsubsystems}
		Let $\mr{\Phi}$ be a reduced irreducible finite root system. We have $\mr{\Phi}_s$ (resp. $\mr{\Phi}_\ell$) is a maximal root subsystem of $\mr{\Phi}$ if and only if $\mr{\Phi}$ is of type $G_2$ or $C_I$ (resp. $\mr{\Phi}$ is of type $G_2$ or $B_I$).
	\end{lem}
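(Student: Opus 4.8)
The plan is to reduce to the case where $\mr{\Phi}$ is irreducible and non-simply-laced and then argue type by type. First I would record three preliminary points: (a) $\mr{\Phi}_s$ and $\mr{\Phi}_\ell$ are always root subsystems, since a reflection preserves root lengths; (b) if $\mr{\Phi}$ is simply-laced then $\mr{\Phi}_s$ is either empty or equal to $\mr{\Phi}$, hence not a maximal root subsystem, consistent with the claim since no simply-laced type is $G_2$, $B_I$ or $C_I$; and (c) the statement for $\mr{\Phi}_\ell$ follows from the one for $\mr{\Phi}_s$ by duality. For (c) I would use that $\gamma\mapsto\gamma^\vee$ is an inclusion-preserving bijection from the root subsystems of $\mr{\Phi}$ onto those of $\mr{\Phi}^\vee$ (because $(s_\alpha\beta)^\vee=s_{\alpha^\vee}(\beta^\vee)$), that it carries $\mr{\Phi}_\ell$ to $(\mr{\Phi}^\vee)_s$, and that dualising interchanges the types $B_I$ and $C_I$ and fixes type $G_2$; thus ``$\mr{\Phi}_\ell$ maximal in $\mr{\Phi}$'' is equivalent to ``$(\mr{\Phi}^\vee)_s$ maximal in $\mr{\Phi}^\vee$'', i.e. to ``$\mr{\Phi}^\vee$ of type $G_2$ or $C_I$'', i.e. to ``$\mr{\Phi}$ of type $G_2$ or $B_I$''. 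It then remains to show that for $\mr{\Phi}$ irreducible and non-simply-laced, $\mr{\Phi}_s$ is maximal if and only if $\mr{\Phi}$ is of type $C_I$ or $G_2$.

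For the ``if'' part in type $C_I$, realised as $\{\pm\epsilon_i\pm\epsilon_j:i\ne j\}\cup\{\pm2\epsilon_i:i\in I\}$ so that $\mr{\Phi}_s=\{\pm\epsilon_i\pm\epsilon_j:i\ne j\}$, I would take a root subsystem $\Psi$ with $\mr{\Phi}_s\subsetneq\Psi$; then $\Psi$ contains $2\epsilon_i$ for some $i$ (up to replacing it by $-2\epsilon_i$), and $s_{\epsilon_i-\epsilon_j}(2\epsilon_i)=2\epsilon_j\in\Psi$ for all $j\ne i$, so $\Psi$ contains every $\pm2\epsilon_k$ and hence $\Psi=\mr{\Phi}$; together with $\mr{\Phi}_s\subsetneq\mr{\Phi}$ this gives maximality. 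For type $G_2$, writing $\alpha$ for a short and $\beta$ for a long simple root, $\mr{\Phi}_s$ is the short $A_2$; given $\Psi\supsetneq\mr{\Phi}_s$, pick a long root $\gamma\in\Psi$. The orbit of $\gamma$ under the short-root reflections $s_\delta$ ($\delta\in\mr{\Phi}_s\subseteq\Psi$) is one of the two size-$3$ orbits into which $\langle s_\delta:\delta\in\mr{\Phi}_s\rangle$ partitions the six long roots (for instance $\{\beta,\ 3\alpha+\beta,\ -(3\alpha+2\beta)\}$), and these two orbits are exchanged by $\gamma\mapsto-\gamma$; since $\Psi=-\Psi$ it follows that $\Psi$ contains all six long roots, hence $\Psi\supseteq\mr{\Phi}_s\cup\mr{\Phi}_\ell=\mr{\Phi}$. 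So in both cases $\mr{\Phi}_s$ is maximal.

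For the ``only if'' part, when $\mr{\Phi}$ is non-simply-laced and not of type $C_I$ or $G_2$ (the remaining possibilities being $B_I$ with $|I|\ge3$, the case $|I|=2$ being excluded since $B_2=C_2$, and $F_4$), I would exhibit a root subsystem strictly between $\mr{\Phi}_s$ and $\mr{\Phi}$. For $B_I$ we have $\mr{\Phi}_s=\{\pm\epsilon_i:i\in I\}$, and for fixed distinct $i,j\in I$ the set $\Psi:=\{\pm\epsilon_k:k\in I\}\cup\{\pm\epsilon_i\pm\epsilon_j\}$ is the orthogonal direct sum of a root system of type $A_1^{|I|-2}$ (the $\pm\epsilon_k$, $k\ne i,j$) and one of type $B_2$ (the $\pm\epsilon_i,\pm\epsilon_j,\pm\epsilon_i\pm\epsilon_j$); being an honest root system inside $\mr{\Phi}$ it is a root subsystem, and $\mr{\Phi}_s\subsetneq\Psi\subsetneq\mr{\Phi}$ (for the last inclusion note $\epsilon_i+\epsilon_k\in\mr{\Phi}\setminus\Psi$ for $k\notin\{i,j\}$, which uses $|I|\ge3$). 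For $F_4$, realised with long roots $\{\pm\epsilon_i\pm\epsilon_j\}$ and short roots $\{\pm\epsilon_i\}\cup\{\tfrac12(\pm\epsilon_1\pm\epsilon_2\pm\epsilon_3\pm\epsilon_4)\}$ so that $\mr{\Phi}_s$ is a copy of $D_4$, I would check that the orthonormal vectors $e_1=\tfrac12(\epsilon_1-\epsilon_2)$, $e_2=\tfrac12(\epsilon_1+\epsilon_2)$, $e_3=\tfrac12(\epsilon_3-\epsilon_4)$, $e_4=\tfrac12(\epsilon_3+\epsilon_4)$ satisfy $\{\pm e_p\pm e_q:1\le p<q\le4\}=\mr{\Phi}_s$ and $\{\pm2e_p:1\le p\le4\}=\{\pm(\epsilon_1\pm\epsilon_2),\ \pm(\epsilon_3\pm\epsilon_4)\}\subseteq\mr{\Phi}_\ell$; then $\Psi:=\mr{\Phi}_s\cup\{\pm(\epsilon_1\pm\epsilon_2),\ \pm(\epsilon_3\pm\epsilon_4)\}$ equals $\{\pm e_p\pm e_q\}\cup\{\pm2e_p\}$, which is exactly the root system of type $C_4$ in the orthonormal frame $e_1,\dots,e_4$, hence a root subsystem of $F_4$, and $\mr{\Phi}_s\subsetneq\Psi\subsetneq F_4$ (it has $24<32<48$ roots). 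In either case $\mr{\Phi}_s$ is not maximal, which completes the proof.

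The inclusion-checks are routine. The two steps that take a moment are the orbit computation in the $G_2$ case (alternatively one can simply inspect the finitely many sub-root-systems of $G_2$) and, in the ``only if'' direction, recognising that the short $D_4$ of $F_4$ sits inside a copy of $C_4$; I expect this last point (the $F_4$ case) to be the one most easily missed, since $\mr{\Phi}_s(F_4)=D_4$ is already a full-rank irreducible subsystem, which might suggest that no proper intermediate root subsystem exists.
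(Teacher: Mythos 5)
Your proof is correct and complete; the paper states this lemma without proof (it is one of the ``couple of easy Lemmas without proof''), so there is no argument in the paper to compare against. The two genuinely non-routine points --- the size-$3$ orbit computation for the long roots of $G_2$ under the short-root reflections, and the intermediate chain $D_4\subsetneq C_4\subsetneq F_4$ realised via $e_1=\tfrac12(\epsilon_1-\epsilon_2),\dots,e_4=\tfrac12(\epsilon_3+\epsilon_4)$ --- both check out. One cosmetic remark: the vectors $e_p$ are orthogonal of common squared length $\tfrac12$ rather than orthonormal, but this does not affect the identification of $\{\pm e_p\pm e_q\}\cup\{\pm 2e_p\}$ as a root subsystem of type $C_4$.
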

	
	\begin{lem}
		Let $J\subseteq I$ be a non-empty subset. Then $\mr{\Psi}_J^B$ defined below is a maximal root subsystem of $B_I;$  $$\mr{\Psi}_J^B:=\{\pm\epsilon_i:i\in I\}\cup \{\pm\epsilon_i\pm\epsilon_j:i,j\in J\text{ or }i,j\notin J\}.$$
		Moreover, any maximal root subsystem $\mr{\Psi}$ of $B_I$ such that $\mr{\Psi}\cap \mr{\Phi}_s\neq \emptyset$ is of the form $\mr{\Psi}_J^B$ for some $\emptyset\neq J\subseteq I.$ Similar statement holds for root system of type $C_I$.
	\end{lem}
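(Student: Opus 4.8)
The plan is to work in the standard model $B_I=\{\pm\epsilon_i:i\in I\}\cup\{\pm\epsilon_i\pm\epsilon_j:i\neq j\}$ and to exploit that the Weyl group acts by signed permutations, with $s_{\epsilon_i}$ the sign change at $i$, $s_{\epsilon_i-\epsilon_j}$ the transposition of $\epsilon_i$ and $\epsilon_j$, and $s_{\epsilon_i+\epsilon_j}$ that transposition followed by the sign changes at $i$ and $j$. That $\mr{\Psi}_J^B$ is a root subsystem is then immediate: for $\gamma\in\mr{\Psi}_J^B$ the reflection $s_\gamma$ is a composition of sign changes, which preserve $\mr{\Psi}_J^B$ because membership only records which block of the partition $\{J,I\setminus J\}$ the indices of a root lie in, with either the identity or a transposition of two indices lying in a common block, which preserves that partition; hence $s_\gamma(\mr{\Psi}_J^B)=\mr{\Psi}_J^B$.

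For maximality, suppose $\mr{\Psi}_J^B\subsetneq\mr{\Psi}'$ (assume $\emptyset\neq J\subsetneq I$, the case $J=I$ being degenerate). Since $\mr{\Psi}_J^B$ contains every short root, $\mr{\Psi}'$ contains a ``mixed'' long root, and applying $s_{\epsilon_i},s_{\epsilon_j}\in\mr{\Psi}_J^B\subseteq\mr{\Psi}'$ we may take it to be $\epsilon_i-\epsilon_j$ with $i\in J$, $j\notin J$. For arbitrary $k\in J$, $l\notin J$ the long roots $\epsilon_k-\epsilon_i$ and $\epsilon_j-\epsilon_l$ lie in $\mr{\Psi}_J^B\subseteq\mr{\Psi}'$, and applying $s_{\epsilon_k-\epsilon_i}$ then $s_{\epsilon_j-\epsilon_l}$ to $\epsilon_i-\epsilon_j$ yields $\epsilon_k-\epsilon_j$ then $\epsilon_k-\epsilon_l$; together with sign changes this forces every mixed long root into $\mr{\Psi}'$, so $\mr{\Psi}'=B_I$.

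For the converse, let $\mr{\Psi}$ be a proper maximal root subsystem with $\epsilon_{i_0}\in\mr{\Psi}$. The engine consists of the two identities $s_{\epsilon_i\pm\epsilon_j}(\epsilon_i)=\mp\epsilon_j$ and $s_{\epsilon_j+\epsilon_k}(\epsilon_i+\epsilon_j)=\epsilon_i-\epsilon_k$: the first shows that if $\epsilon_i\in\mr{\Psi}$ and some $\pm\epsilon_i\pm\epsilon_j\in\mr{\Psi}$ then $\epsilon_j\in\mr{\Psi}$, hence (using $s_{\epsilon_i},s_{\epsilon_j}$) all four $\pm\epsilon_i\pm\epsilon_j\in\mr{\Psi}$; the second propagates long roots along paths. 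Let $L$ be the graph on $I$ with $\{i,j\}\in L$ iff $\mr{\Psi}$ contains some $\pm\epsilon_i\pm\epsilon_j$, and let $C_1,\dots,C_r$ be its connected components. I first show $\mr{\Psi}\supseteq\mr{\Phi}_s$: using the two identities and checking reflection-stability, the root subsystem generated by $\mr{\Psi}\cup\mr{\Phi}_s$ is seen to equal $\{\pm\epsilon_i:i\in I\}\cup\{\pm\epsilon_i\pm\epsilon_j:i,j\text{ in a common }C_t\}$, which is $B_I$ exactly when $L$ is connected; but if $L$ were connected, then propagating the first identity from $i_0$ would make every index active and the second would then force all long roots, giving $\mr{\Psi}=B_I$ and contradicting properness, so the generated subsystem is proper and maximality yields $\mr{\Psi}\supseteq\mr{\Phi}_s$. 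With all short roots available, the two identities now show $\mr{\Psi}=\{\pm\epsilon_i:i\in I\}\cup\{\pm\epsilon_i\pm\epsilon_j:i,j\text{ in a common }C_t\}$; properness forces $r\ge 2$, and with $J:=C_1$ we get $\mr{\Psi}\subseteq\mr{\Psi}_J^B\subsetneq B_I$, whence $\mr{\Psi}=\mr{\Psi}_J^B$ by maximality (and in fact $r=2$).

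The type-$C_I$ statement follows by the duality of Section~\ref{sectionduality}: $C_I=B_I^\vee$, duality interchanges short and long roots and carries $\mr{\Psi}_J^C$ to $\mr{\Psi}_J^B$, and it is a bijection on maximal root subsystems, so the maximal root subsystems of $C_I$ meeting the long roots are exactly the $\mr{\Psi}_J^C$; alternatively one repeats the argument verbatim with $2\epsilon_i$ in place of the short root $\epsilon_i$. I expect the only genuinely delicate point to be the step that a maximal $\mr{\Psi}$ meeting $\mr{\Phi}_s$ must contain \emph{all} of $\mr{\Phi}_s$ — the identification of the subsystem generated by $\mr{\Psi}\cup\mr{\Phi}_s$ and the exclusion of a connected $L$ — while the rest is routine manipulation with signed permutations, needing only minor attention to the small cases $|J|=1$ and $|I\setminus J|=1$.
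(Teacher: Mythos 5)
The paper states this lemma explicitly \emph{without proof} (it is one of the ``couple of easy Lemmas without proof'' closing Section~\ref{secrootsubsystems}), so there is no in-paper argument to compare against; what you have written is a correct and complete proof that fills the gap. Your strategy --- view the Weyl group as signed permutations, record the long roots of $\mr{\Psi}$ as a graph $L$ on $I$, observe that $\Sigma:=\mr{\Phi}_s\cup\{\pm\epsilon_i\pm\epsilon_j: i,j \text{ in a common component of } L\}$ is a root subsystem containing $\mr{\Psi}$, rule out $L$ connected via $s_{\epsilon_i\pm\epsilon_j}(\epsilon_i)=\mp\epsilon_j$ and $s_{\epsilon_j+\epsilon_k}(\epsilon_i+\epsilon_j)=\epsilon_i-\epsilon_k$, and then conclude $\mr{\Psi}=\Sigma=\mr{\Psi}^B_{C_1}$ with exactly two components --- is the natural one, and it is in the same spirit as the short proof the paper does give for the non-reduced analogue (Lemma~\ref{nonredgrmaximal}), where the set $I_{\mr\Psi}=\{i:\epsilon_i\in\mr\Psi\}$ plays the role of your partition. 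Two cosmetic points: reflections $s_{\epsilon_i}$ are elements of the Weyl group, not of $\mr{\Psi}^B_J$, so the phrase ``$s_{\epsilon_i},s_{\epsilon_j}\in\mr{\Psi}^B_J$'' should read ``$\epsilon_i,\epsilon_j\in\mr{\Psi}^B_J$''; and you are right that $J=I$ must be excluded (then $\mr{\Psi}^B_I=B_I$ is not proper), which is in fact a small imprecision in the statement as printed rather than in your argument. The reduction of the $C_I$ case to the $B_I$ case via the duality of Section~\ref{sectionduality}, which exchanges $\mr{\Phi}_s$ and $\mr{\Phi}_\ell$ and preserves maximal root subsystems, is also sound.
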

	\begin{rem}\label{remfinitebcclosed}
		If $\mr{\Phi}$ is of type $G_2$ or $C_I,$ then the maximal root subsystem $\mr{\Phi}_s$ is not a closed root subsystem of $\mr{\Phi}.$ If $\mr{\Psi}$ is a maximal root subsystem in $B_I$ or $C_I$ such that $\mr{\Psi}\neq \mr{\Phi}_s,\mr{\Phi}_\ell,$ then $\mr{\Psi}$ is a closed root subsystem.
	\end{rem}
	
	\textbf{Mild Assumption:} \textit{For the rest of the paper we will assume that $\Lambda_\ell$ is also a subgroup in the case when $\mathring{\Phi}$ is of type $B_2$ or $BC_2$. Moreover, unless otherwise stated, no irreducible component of affine reflection systems considered, is of type $A_1.$}

	\section{Reduced affine reflection system where \texorpdfstring{$\mr{\Phi}$}{Φ} is not of type \texorpdfstring{$B_I$}{B}}\label{secrednotB}
	In this section we shall classify the maximal root subsystems of an irreducible affine reflection system where $\mr{\Phi}$ is not of type $B_I.$ Note that  $\Lambda_s$ is a subgroup of $Y$ by Section \ref{propexdat}.
	The main result of this section generalizes \cite[Prop 2.6.1(1)]{roy2019maximal}, and \cite{dyer2011reflection,dyer2011root} for maximal root subsystems. We begin with the following Lemma.
	\begin{lem}\label{lemrednotoftypeB}
		Let $\Phi$ be an irreducible reduced affine reflection system such that $\mr{\Phi}$ is not of type $B_I.$ Let $\Psi$ be defined below for some $\bz$-linear function $p:\mr{\Phi}\to Y$ so that $p_\alpha\in Y_\alpha(\Psi)$ for all $\alpha\in\mr{\Phi}.$ Then $\Psi$ is a maximal root subsystem of $\Phi.$
		\begin{enumerate}
			\item\label{lemshortfull} If $\Lambda_\ell\neq m\Lambda_s$ and $\Psi$ is defined by $$\Psi=\{\alpha\oplus \Lambda_s:\alpha\in\mr{\Phi}_s\}\cup\{\alpha\oplus (p_\alpha+S):(p_\alpha+S)\subseteq \Lambda_\ell,\ \alpha\in\mr{\Phi}_\ell\},$$ where $S$ is a maximal subgroup (resp. maximal union of cosets) of $\Lambda_\ell$ (resp. in $\Lambda_s/m\Lambda_s$) if $\mr{\Phi}$ is not of type $C_I$ (resp. is of type $C_I$) containing $m\Lambda_s.$ 
			\item\label{lemdetm} If $\langle \Lambda_\ell\rangle\neq \Lambda_s$ and $\Psi$ is defined by $$\Psi=\{\alpha\oplus (p_\alpha+H):\alpha\in\mr{\Phi}_s\}\cup\{\alpha\oplus \Lambda_\ell:\alpha\in\mr{\Phi}_\ell\},$$ where $H$ is a maximal subgroup of $\Lambda_s$ that contains $\Lambda_\ell.$
			\item\label{lemdetnotm} $H$ is a maximal subgroup of $\Lambda_s$ satisfying $m\Lambda_s+H=\Lambda_s$ and $\Psi$ is defined by $$\Psi:=\{\alpha\oplus (p_\alpha+H):\alpha\in\mr{\Phi}_s\}\cup\{\alpha\oplus (p_\alpha+H)\cap\Lambda_\ell:\alpha\in\mr{\Phi}_\ell\}.$$
		\end{enumerate}
	\end{lem}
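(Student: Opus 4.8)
The plan is to verify, in each of the three cases, two things: (a) the displayed set $\Psi$ really is a root subsystem, and (b) it is maximal, i.e. any root subsystem $\Psi'$ with $\Psi\subsetneq\Psi'$ must be all of $\Phi$. For (a) I would use the criterion recorded in Section \ref{keysection}: it suffices to exhibit subspaces $W_s,W_\ell$ of $Y$ satisfying Equations \eqref{pointedreflectionsubspaces} and \eqref{primekeyeqn} (and \eqref{primeextreaeqns} when the relevant $(x,x,x)$ triple occurs in $\mr\Phi$), together with the $\bz$-linear $p$ whose compatibility $p_\alpha+W_x\subseteq\Lambda_x$ is built into the hypotheses. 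In case \eqref{lemshortfull} take $W_s=\Lambda_s$, $W_\ell=S$; since $S$ contains $m\Lambda_s$ and $\Lambda_s+\Lambda_\ell\subseteq\Lambda_s$ holds in $\Lambda$, one checks $W_s+W_\ell\subseteq W_s$ and $W_\ell+mW_s=W_\ell+m\Lambda_s\subseteq S=W_\ell$, and $S$ being a subgroup (or, in type $C_I$, a union of cosets mod $m\Lambda_s$) gives the remaining closure. In case \eqref{lemdetm} take $W_s=H$, $W_\ell=\Lambda_\ell$; here $\Lambda_\ell\subseteq H$ forces $W_s+W_\ell\subseteq H+H=H$ (using that $H$ is a subgroup and, when $\mr\Phi$ is of type $C_I$, that the $(s,s,s)$ triple is available so Equation \eqref{primeextreaeqns} is permitted) and $W_\ell+mW_s\subseteq\Lambda_\ell+m\Lambda_s\subseteq\Lambda_\ell=W_\ell$. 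In case \eqref{lemdetnotm} take $W_s=H$ and $W_\ell=H\cap\Lambda_\ell$; the closure relations are inherited from those of $\Lambda_s,\Lambda_\ell$ intersected with the subgroup $H$, once one notes $mW_s+W_\ell=mH+(H\cap\Lambda_\ell)\subseteq H$.

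For maximality in cases \eqref{lemdetm} and \eqref{lemdetnotm} I would argue at the level of the gradient. If $\Psi'\supsetneq\Psi$ is a root subsystem, then $\mathrm{Gr}(\Psi')\supseteq\mr\Phi_s\cup\mr\Phi_\ell=\mr\Phi$, so $\mathrm{Gr}(\Psi')=\mr\Phi$ is irreducible and, by Proposition \ref{propprimeised}, $\Psi'$ is built from subspaces $Y_s'(\Psi')\supseteq W_s=H$, $Y_\ell'(\Psi')\supseteq W_\ell$ of $Y$. Since $H$ is a \emph{maximal} subgroup of $\Lambda_s$, either $Y_s'(\Psi')=H$ or $Y_s'(\Psi')\supseteq\Lambda_s$; in the first sub-case Equation \eqref{primekeyeqn} ($Y_s'+Y_\ell'\subseteq Y_s'$) forces $Y_\ell'(\Psi')\subseteq H\cap\Lambda_\ell$, and combined with the reverse inclusion from $\Psi'\supseteq\Psi$ (which already pins $Y_\ell'$ in cases \eqref{lemdetm},\eqref{lemdetnotm}) one gets $\Psi'=\Psi$, a contradiction; hence $Y_s'(\Psi')=\Lambda_s$, and then in case \eqref{lemdetm} we are done ($Y_\ell'=\Lambda_\ell$ already), while in case \eqref{lemdetnotm} one uses $m\Lambda_s+H=\Lambda_s$ and $Y_\ell'+mY_s'\subseteq Y_\ell'$ to push $Y_\ell'(\Psi')$ up to $\Lambda_\ell$, so $\Psi'=\Phi$. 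For case \eqref{lemshortfull} the argument is dual in spirit: the short part is already maximal ($Y_s'=\Lambda_s$), so any enlargement must enlarge $Y_\ell'$; maximality of $S$ among subgroups of $\Lambda_\ell$ containing $m\Lambda_s$ (respectively among unions of cosets in $\Lambda_s/m\Lambda_s$ in type $C_I$, where $\Lambda_\ell$ need not be a group and $Y_\ell'$ is only constrained to be such a union of cosets by Equations \eqref{pointedreflectionsubspaces}--\eqref{primekeyeqn}) gives $Y_\ell'(\Psi')=\Lambda_\ell$, hence $\Psi'=\Phi$.

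The step I expect to be the main obstacle is the type $C_I$ branch of case \eqref{lemshortfull} — and, symmetrically, wherever $\Lambda_\ell$ fails to be a subgroup. There the structure "$S$ is a maximal union of cosets of $m\Lambda_s$ inside $\Lambda_\ell$, containing $m\Lambda_s$" has to interact correctly with Equation \eqref{primekeyeqn}: one must check that such an $S$ genuinely yields closure $W_s+W_\ell\subseteq W_s$ and $W_\ell+mW_s\subseteq W_\ell$ without assuming $S$ is additively closed, and that the only union of cosets strictly larger than $S$ that is still compatible is $\Lambda_\ell$ itself. This is exactly the kind of coset-counting for which Lemma \ref{keylemmalattice} (with the prime $\neq 2$, here the prime is the lacing-number–controlled index $[\Lambda_s:H]$ or $[\Lambda_\ell:S]$, which for $C_I$ is $2$, so one must instead invoke Lemma \ref{lemmaximalsubgroup} and handle the $p=2$ case by hand using the Mild Assumption that $\Lambda_\ell$ is a subgroup when $\mr\Phi$ is of type $B_2$) is designed; I would keep the exceptional small-rank cases ($C_2$, $BC_2$) in view throughout, since that is where the "$p=2$" phenomenon and the subgroup-vs-union-of-cosets dichotomy are most delicate.
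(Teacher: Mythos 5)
Your proposal is essentially correct and follows the same skeleton as the paper's proof: verify the root-subsystem property via the closure relations of Section \ref{keysection}, then for maximality take $\Gamma\supsetneq\Psi$ and run the dichotomy $Y_s'(\Gamma)=H$ versus $Y_s'(\Gamma)=\Lambda_s$, using Equation \eqref{primekeyeqn} to force $\Gamma\subseteq\Psi$ in the first branch. The one place you genuinely diverge is the second branch of case \eqref{lemdetnotm}. The paper splits on $m\neq 2$ (where $\Lambda_\ell$ is a subgroup, the second isomorphism theorem makes $H\cap\Lambda_\ell$ maximal in $\Lambda_\ell$, and the candidate intermediate subsystem fails to be a root subsystem because $m\Lambda_s\not\subseteq H$) versus $m=2$ (where $\Gamma$ must be of the form \eqref{lemshortfull} and Lemma \ref{keylemmalattice} is invoked). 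You instead push $Y_\ell'(\Gamma)$ up directly via $Y_\ell'+m\Lambda_s\subseteq Y_\ell'$ together with $H+m\Lambda_s=\Lambda_s$. That route does work uniformly, and avoids Lemma \ref{keylemmalattice} altogether, but the whole content sits in the identity $\bigl((p_\beta+H)\cap\Lambda_\ell\bigr)+m\Lambda_s=\Lambda_\ell$, which you do not justify: given $z\in\Lambda_\ell$, write $z-p_\beta=h+my$ with $h\in H$, $y\in\Lambda_s$ (possible since $H+m\Lambda_s=\Lambda_s$); then $z-my\in(p_\beta+H)\cap\Lambda_\ell$ by the relation $\Lambda_\ell+m\Lambda_s\subseteq\Lambda_\ell$ of Section \ref{lambdainclusions}, so $z$ lies in the left-hand side. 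With that line added, your argument for \eqref{lemdetnotm} is complete and is arguably cleaner than the paper's.

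There is, however, a concrete error in your ``main obstacle'' paragraph, precisely where you anticipated trouble. You claim that in type $C_I$ the index $[\Lambda_s:H]$ equals $2$ and that Lemma \ref{keylemmalattice} therefore fails to apply. This is backwards: in case \eqref{lemdetnotm} the hypothesis $m\Lambda_s+H=\Lambda_s$ means $m\Lambda_s=2\Lambda_s\not\subseteq H$, and Lemma \ref{lemmaximalsubgroup} then forces $d(H)/d(\Lambda_s)\neq 2$ --- which is exactly the hypothesis of Lemma \ref{keylemmalattice} (the ``$2$'' there is the modulus of the cosets, not the index of $H$). This is how the paper closes the $m=2$ branch. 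The Mild Assumption is also not relevant here: it concerns $\Lambda_\ell$ in types $B_2$ and $BC_2$, not the type $C_I$ situation you are worried about. Since your own main line of argument does not actually need Lemma \ref{keylemmalattice}, this misstep does not sink the proof, but as written it would lead you to handle the delicate case with the wrong tool.
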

	\begin{proof}
		It is easy to check that each $\Psi$ defined above is a root subsystem. We shall only prove $(3)$ since the proofs of $(1)$ and $(2)$ are easy. Assume that $\Psi$ is  given by $(3).$ 
		If $\Psi$ is not maximal in $\Phi,$ then 
		there exists a maximal root subsystem  $\Gamma$ of $\Phi$ satisfying $\Psi\subsetneq \Gamma \subseteq \Phi.$ If $Y_s'(\Gamma)\neq \Lambda_s,$ then $Y_s'(\Gamma)=H$ and Equation \eqref{primekeyeqn} implies that $Y_\beta(\Gamma)\subseteq (p_\beta+H)\cap\Lambda_\ell$ for $\beta\in\mr{\Phi}_\ell.$ Thus $\Gamma\subseteq\Psi,$ a contradiction. Therefore, $Y_s'(\Gamma)= \Lambda_s.$ If $m\neq 2,$ then $\Lambda_\ell$ is a subgroup of $\Lambda_s$ (c.f. Equation \eqref{lambdainclusions}) and by second isomorphism Theorem, we have $\Lambda_\ell/H\cap\Lambda_\ell\cong \Lambda_s/H.$ Therefore $H\cap\Lambda_\ell$ is a maximal subgroup of $\Lambda_\ell.$ If $\Gamma\neq \Phi,$ then 
		$$\Gamma=\{\alpha\oplus\Lambda_\alpha:\alpha\in\mr{\Phi}_s\}\cup \{\alpha\oplus(p_\alpha+H\cap\Lambda_\ell):\alpha\in\mr{\Psi}_\ell\}.$$
		But $\Gamma$ above defines a root subsystem if and only if $m\Lambda_s\subseteq H\cap \Lambda_\ell$ if and only if $m\Lambda_s\subseteq H$ which contradicts the assumption. If $m=2,$ then $\Gamma$ is of the form (\ref{lemshortfull}) for some $S$ and we have $H\cap (\Lambda_\ell-p_\alpha)\subseteq S,\alpha\in\mr{\Phi}_\ell.$ Now $m\Lambda_s\not\subseteq H,$ Lemma \ref{lemmaximalsubgroup} and the fact $0\in\Lambda_\ell-p_\alpha\ \forall\alpha\in\mr{\Phi}_\ell$ imply that all the hypothesis of Lemma \ref{keylemmalattice} are satisfies. Hence we must have $\Lambda_\ell-p_\alpha\subseteq S$ and thus $\Lambda_\ell\subseteq p_\alpha+S, \forall\alpha\in\mr{\Phi}_\ell$ and consequently $\Gamma=\Phi$. In any case, we have that $\Psi$ is a maximal root subsystem of $\Phi.$
	\end{proof}
	\begin{rem}\label{spremark}
		\begin{enumerate}[leftmargin=*]
			\item If $\Lambda_\ell$ is a subgroup of $\Lambda_s,$ then we have $(p_\alpha+H)\cap \Lambda_\ell=p_\alpha+(H\cap\Lambda_\ell),\ \forall\alpha\in\mr{\Phi}_\ell.$
			\item Since $\Lambda_\ell$ is a union of cosets of $\Lambda_s$ modulo $m\Lambda_s,$ if $m=2,$ we have $m\Lambda_s\subseteq \Lambda_\ell+a$ for all $a\in\Lambda_\ell.$ Therefore if $m\Lambda_s\not\subseteq H,$ then $\Lambda_\ell+a\not\subseteq H$ for any $a\in \Lambda_\ell.$ In particular $(p_\alpha+H)\cap \Lambda_\ell\neq \Lambda_\ell$ for all $\alpha\in\mathrm{\Phi}_\ell$ and consequently $Y_\alpha(\Psi)\neq \Lambda_\ell$ for all $\alpha\in\mr{\Phi}_\ell$ if $\Psi$ is defined as in Lemma \ref{lemrednotoftypeB}(\ref{lemdetnotm}).
		\end{enumerate}
	\end{rem}
	\noindent The next Proposition proves the converse of the above Lemma when $\mathrm{Gr}(\Psi)$ is full.
	\begin{prop}\label{proprednotoftypeB}
		Let $\Phi$ be an irreducible reduced affine reflection system such that $\mr{\Phi}$ is not of type $B_I$. Let $\Psi$ be a maximal root subsystem of $\Phi.$ Then one of the following holds. 
		\begin{enumerate}
			\item\label{grproper} $\mathrm{Gr}(\Psi)$ is a maximal root subsystem of $\mr{\Phi}$ and $Y_\alpha(\Psi)=\Lambda_\alpha$ for all $\alpha\in \mathrm{Gr}(\Psi).$
			\item \label{subrootnotb} $\mathrm{Gr}(\Psi)=\mr{\Phi}$ and there exist a $\bz$-linear function $p:\mathrm{Gr}(\Psi)\to Y$ satisfying $p_\alpha\in Y_\alpha(\Psi)\ \forall \alpha\in \mr{\Phi}$ and $\Psi$ is given by Lemma \ref{lemrednotoftypeB}.
		\end{enumerate}
	\end{prop}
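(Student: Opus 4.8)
The plan is to separate the argument according to whether the gradient $\mathrm{Gr}(\Psi)$ exhausts $\mathring{\Phi}$ or not. By Lemma~\ref{lemm1}, $\mathrm{Gr}(\Psi)$ is a root subsystem of $\mathring{\Phi}\cup\{0\}$, and since the extension datum axioms are inherited, $(\Lambda_\alpha:\alpha\in\mathrm{Gr}(\Psi)\cup\{0\})$ is again an extension datum; hence $\widehat{\mathrm{Gr}(\Psi)}=\{\alpha\oplus\Lambda_\alpha:\alpha\in\mathrm{Gr}(\Psi)\}$ is a root subsystem of $\Phi$ with $\Psi\subseteq\widehat{\mathrm{Gr}(\Psi)}$. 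If $\mathrm{Gr}(\Psi)\subsetneq\mathring{\Phi}$, then $\widehat{\mathrm{Gr}(\Psi)}\subsetneq\Phi$, so maximality of $\Psi$ forces $\Psi=\widehat{\mathrm{Gr}(\Psi)}$, i.e.\ $Y_\alpha(\Psi)=\Lambda_\alpha$ for all $\alpha\in\mathrm{Gr}(\Psi)$; and $\mathrm{Gr}(\Psi)$ must then be maximal in $\mathring{\Phi}$, since any root subsystem strictly between $\mathrm{Gr}(\Psi)$ and $\mathring{\Phi}$ would produce, via $\widehat{(\ \cdot\ )}$, a root subsystem strictly between $\Psi$ and $\Phi$. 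This is case~\eqref{grproper}.

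From now on assume $\mathrm{Gr}(\Psi)=\mathring{\Phi}$; recall $\Lambda_s$ is a subgroup of $Y$ (Section~\ref{propexdat}). Fix a $\bz$-linear $p:\mathring{\Phi}\to Y$ as in Section~\ref{keysection}, so $p_\alpha\in Y_\alpha(\Psi)$, and by Proposition~\ref{propprimeised} the sets $Y_x'=Y_x'(\Psi)$ ($x\in\{s,\ell\}$) satisfy \eqref{pointedreflectionsubspaces}, \eqref{primekeyeqn}, \eqref{primeextreaeqns}; in particular $Y_s'$ is a subgroup of $\Lambda_s$, $Y_\ell'\subseteq Y_s'$, $mY_s'\subseteq Y_\ell'$, $p_\alpha+Y_\ell'\subseteq\Lambda_\ell$ for long $\alpha$, and $Y_\ell'$ is a union of cosets of $mY_s'$. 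Since $\Psi\neq\Phi$, either $Y_s'=\Lambda_s$ or $Y_s'\subsetneq\Lambda_s$. \emph{If $Y_s'=\Lambda_s$:} then $m\Lambda_s\subseteq Y_\ell'$ while $p_\alpha+Y_\ell'\subsetneq\Lambda_\ell$ for long $\alpha$, and necessarily $\Lambda_\ell\neq m\Lambda_s$. Choose $S\supseteq Y_\ell'$ maximal among the proper subgroups of $\Lambda_\ell$ containing $m\Lambda_s$ when $\mathring{\Phi}$ is not of type $C_I$ (so $\Lambda_\ell$ is a group and $\mathring{\Phi}$ contains a triple of type $(\ell,\ell,\ell)$), resp.\ maximal among the proper unions of cosets of $m\Lambda_s$ contained in $\Lambda_\ell-p_\alpha$ when $\mathring{\Phi}$ is of type $C_I$. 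Then $\Gamma:=\{\alpha\oplus\Lambda_s:\alpha\in\mathring{\Phi}_s\}\cup\{\alpha\oplus(p_\alpha+S):\alpha\in\mathring{\Phi}_\ell\}$ is a root subsystem of the shape of Lemma~\ref{lemrednotoftypeB}\eqref{lemshortfull}, with $\Psi\subseteq\Gamma\subsetneq\Phi$, so $\Psi=\Gamma$.

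\emph{If $Y_s'\subsetneq\Lambda_s$:} let $H\supseteq Y_s'$ be a maximal subgroup of $\Lambda_s$. Using $mH\subseteq m\Lambda_s\subseteq\Lambda_\ell-p_\alpha$ together with \eqref{pointedreflectionsubspaces}--\eqref{primeextreaeqns}, one checks that $\Gamma:=\{\alpha\oplus(p_\alpha+H):\alpha\in\mathring{\Phi}_s\}\cup\{\alpha\oplus((p_\alpha+H)\cap\Lambda_\ell):\alpha\in\mathring{\Phi}_\ell\}$ is a root subsystem; since $Y_\ell'\subseteq Y_s'\subseteq H$ and $Y_\ell'\subseteq\Lambda_\ell-p_\alpha$ we get $\Psi\subseteq\Gamma$, and $\Gamma\neq\Phi$ because $H\subsetneq\Lambda_s$. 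Hence $\Psi=\Gamma$, and comparing short parts $H=Y_s'$, so $Y_s'$ is itself a maximal subgroup of $\Lambda_s$. If $m\Lambda_s\not\subseteq H$, then $m\Lambda_s+H=\Lambda_s$ by maximality of $H$, and $\Psi$ has the form of Lemma~\ref{lemrednotoftypeB}\eqref{lemdetnotm}. If $m\Lambda_s\subseteq H$, I claim $\Lambda_\ell\subseteq p_\alpha+H$ for long $\alpha$: otherwise $H\cap(\Lambda_\ell-p_\alpha)\subsetneq\Lambda_\ell-p_\alpha$, and enlarging the long part to a maximal proper union of cosets of $m\Lambda_s$ in $\Lambda_\ell-p_\alpha$ above $H\cap(\Lambda_\ell-p_\alpha)$ (taken to be a subgroup when $\mathring{\Phi}\neq C_I$) while enlarging the short part to $\Lambda_s$ yields a root subsystem strictly between $\Gamma=\Psi$ and $\Phi$, a contradiction. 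Thus the long part of $\Psi$ is all of $\Lambda_\ell$, and since $\langle\Lambda_\ell\rangle=\langle\Lambda_\ell-p_\alpha\rangle\subseteq H\subsetneq\Lambda_s$, $\Psi$ has the form of Lemma~\ref{lemrednotoftypeB}\eqref{lemdetm}. Either way we land in case~\eqref{subrootnotb}.

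I expect the main obstacle to be the case $\mathrm{Gr}(\Psi)=\mathring{\Phi}$: for each conjectured enlargement $\Gamma$ above one must certify both that it is a genuine root subsystem and that it is genuinely proper, and one must rule out a ``mixed'' maximal root subsystem whose short part is a proper maximal subgroup of $\Lambda_s$ while the long part is neither full nor of the shape~\eqref{lemdetnotm}. The delicate type is $C_I$, where $\Lambda_\ell$ and the $Y_\ell'$ are merely unions of cosets of $m\Lambda_s$ rather than subgroups; keeping these coset enlargements under control — exactly as in the proof of Lemma~\ref{lemrednotoftypeB} — is the role of Lemmas~\ref{lemmaximalsubgroup} and~\ref{keylemmalattice} (applied with $\Lambda=\Lambda_s$), which upgrade an inclusion $H\cap L\subseteq L'$ between coset sets to $L\subseteq L'$ and so force $\Psi$ into precisely one of the three shapes of Lemma~\ref{lemrednotoftypeB}.
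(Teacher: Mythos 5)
Your proposal is correct and follows essentially the same route as the paper: split on whether $\mathrm{Gr}(\Psi)$ is proper, then (for full gradient) on whether $Y_s'=\Lambda_s$, enlarge $Y_s'$ to a maximal subgroup $H$ of $\Lambda_s$, verify that the resulting candidate is a root subsystem, and finally distinguish the cases $m\Lambda_s\subseteq H$ and $m\Lambda_s\not\subseteq H$ to land in the three shapes of Lemma \ref{lemrednotoftypeB}. The only notable (harmless) variations are that you obtain $m\Lambda_s+H=\Lambda_s$ directly from subgroup maximality rather than via the paper's auxiliary subsystem $\Psi''$, and that in the case $m\Lambda_s\subseteq H$ you enlarge both parts instead of only the short part; the routine verification that $\Gamma$ is a root subsystem (especially the long-against-long reflection in type $C_I$) is the one step you compress to ``one checks,'' which is exactly where the paper does its explicit computation.
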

	
	\begin{proof}
		Let $\Psi$ be a maximal root subsystem of $\Phi.$ If $\mathrm{Gr}(\Psi)$ is proper in $\mr{\Phi},$ then clearly $\mathrm{Gr}(\Psi)$ is a maximal root subsystem of $\mr{\Phi}$ and $Y_\alpha(\Psi)=\Lambda_\alpha\ \forall\alpha\in \mathrm{Gr}(\Psi).$ So assume that $\mathrm{Gr}(\Psi)=\mr{\Phi}.$ By Section \ref{sectiondefinitionp} there exists a $\bz$-linear function $p:\mr{\Phi}\to Y$ such that $p_\alpha\in Y_\alpha(\Psi)$ for all $\alpha\in \mathrm{Gr}(\Psi).$ Since $\Psi$ is irreducible, we define $Y_s'$ and $Y_\ell'$ as in Section \ref{keysection}.
		\medskip
		
		\noindent First we assume that $Y_s'=\Lambda_s.$ By Section \ref{keysection} we have that $Y_\ell'$ is a union of cosets in $\Lambda_s/m\Lambda_s$ containing $m\Lambda_s$ as one of its cosets. Note that $\Lambda_\ell\neq m\Lambda_s$ must hold. 
		However, if $m=2$ and $A$ is any union of cosets in $\Lambda_s/2\Lambda_s$ containing $2\Lambda_s,$ then we must have $$A=-A,\ \ 0\in A,\ \ A+2A\subseteq A,\ \ A+2\Lambda_s\subseteq A,\ \ \Lambda_s+2A\subseteq \Lambda_s.$$ Hence it follows that $Y_\ell'$ is a maximal union of cosets in $\Lambda_s/m\Lambda_s$ satisfying $p_\alpha+Y_\ell'\subseteq \Lambda_\ell,\ \alpha\in\mr{\Phi}_\ell.$ If $m\neq 2,$ then by Equation \eqref{primeextreaeqns} we have that $Y_\ell'$ is a maximal subgroup of $\Lambda_\ell$ containing $m\Lambda_s.$ In particular, $\Psi$ is of the form Lemma \ref{lemrednotoftypeB}(\ref{lemshortfull}).
		\medskip
		
		\noindent Now assume that $Y_s'\neq \Lambda_s.$ Recall that $Y_s'$ is a subgroup of $\Lambda_s.$ Let $H$ be a subgroup of $\Lambda_s$ containing $Y_s'.$  Define $\Psi'$ by $$\Psi':=\{\alpha\oplus(p_\alpha+H):\alpha\in \mr{\Phi}_s\}\cup \{\alpha\oplus (p_\alpha+H)\cap \Lambda_\ell:\alpha\in \mr{\Phi}_\ell\}.$$
		We shall show that $\Psi'$ is a root subsystem of $\Phi.$ Assume that for the moment. Since $Y_\ell'(\Psi)\subseteq Y_s'(\Psi)\subseteq H,$ we have that $p_\alpha+Y_\ell'(\Psi)\subseteq (p_\alpha+H)\cap\Lambda_\ell.$ Hence $\Psi'$ satisfies $\Psi\subseteq \Psi'\subsetneq \Phi.$ Since $\Psi$ is a maximal root subsystem of $\Phi,$ it follows that $\Psi=\Psi'$ and $H$ is a maximal subgroup of $\Lambda_s.$ 
		
		Now we prove that $\Psi'$ is a root subsystem of $\Phi.$ Let $\alpha_1=\alpha\oplus (p_\alpha+h)\in\Psi'$ and $\beta_1=\beta\oplus (p_\beta+h_1)\in\Psi'.$ Then we have $$s_{\alpha_1}(\beta_1)=s_\alpha(\beta)\oplus[(p_\beta+h_1)-(\beta,\alpha^\vee)(p_\alpha+h)]=s_\alpha(\beta)\oplus [p_{s_\alpha(\beta)}+(h_1-(\beta,\alpha^\vee)h)].$$
		If $\beta\in\mr{\Phi}_s,$ then it is clear that $s_{\alpha_1}(\beta_1)\in \Psi'.$ Suppose that $\beta\in\mr{\Phi}_\ell.$ If $\alpha$ is short, then $(p_\beta+h_1)-(\beta,\alpha^\vee)(p_\alpha+h)\in p_{s_\alpha(\beta)}+H.$ Moreover, we have $$(p_\beta+h_1)-(\beta,\alpha^\vee)(p_\alpha+h)\in \Lambda_\ell+m\Lambda_s\subseteq \Lambda_\ell.$$ Therefore $s_{\alpha_1}(\beta_1)\in \Psi'.$ Now assume that $\alpha$ is also a long root. If $\Lambda_\ell$ is a subgroup of $Y,$ then the same argument along with Remark \ref{spremark} implies that $s_{\alpha_1}(\beta_1)\in \Psi'.$ If $\Lambda_\ell$ is not a subgroup, then we have $m=2$ and $\mr{\Phi}$ is of type $C_I.$ Hence $s_{\alpha_1}(\beta_1)\neq \beta_1$ if and only if $\beta=\pm\alpha.$ In that case, we need to show that $C_\alpha:=(p_\alpha+H)\cap\Lambda_\ell$ satisfies $$C_{\alpha}- 2C_{\alpha}\subseteq C_{-\alpha},\  (\text{or equivalently})\ \  C_{-\alpha}+ 2C_{\alpha}\subseteq C_{\alpha},$$ which holds since $\Lambda_\ell$ satisfies relations in Section \ref{lambdainclusions}. This proves that $\Psi'$ is a root subsystem of $\Phi.$ Summarizing, there exists a maximal subgroup $H$ of $\Lambda_s$ such that $\Psi$ is of the form 
		$$\Psi=\{\alpha\oplus(p_\alpha+H):\alpha\in \mr{\Phi}_s\}\cup \{\alpha\oplus (p_\alpha+H)\cap \Lambda_\ell:\alpha\in \mr{\Phi}_\ell\}.$$
		
		\medskip
		
		\textit{Case 1:}  We shall first consider the case when $m\Lambda_s\subseteq H.$ It is easy to check that $\Psi'$ defined below is a root subsystem of $\Phi$ which contains $\Psi:$
		$$\Psi':=\{\alpha\oplus(p_\alpha+\Lambda_s):\alpha\in\mr{\Phi}_s\}\cup \{\alpha\oplus(p_\alpha+H)\cap\Lambda_\ell:\alpha\in\mr{\Phi}_\ell\},$$
		Hence $\Psi'=\Phi$ and we have $(p_\alpha+H)\cap\Lambda_\ell=\Lambda_\ell$ for all $\alpha\in\mr{\Phi}_\ell$ and so $\Psi$ is of the  form $$\Psi=\{\alpha\oplus (p_\alpha+H):\alpha\in \mr{\Phi}_s\}\cup \{\alpha\oplus \Lambda_\ell:\alpha\in\mr{\Phi}_\ell\}.$$
		Therefore we have $H+\Lambda_\ell=H$ by Equation \eqref{primekeyeqn} and thus $\Lambda_\ell\subseteq H.$ Note that $\langle\Lambda_\ell\rangle \neq \Lambda_s$ must hold and so $\Psi$ is of the form Lemma \ref{lemrednotoftypeB}(\ref{lemdetm}).
		
		\medskip
		
		\textit{Case 2:} Now assume that  $m\Lambda_s\not\subseteq H.$ Indeed $H$ satisfies $\langle\Lambda_\ell\rangle+H=\Lambda_s$ otherwise $\Psi''$ defined below is a proper root subsystem of $\Phi$ which strictly contains $\Psi:$
		$$\Psi'':=\{\alpha\oplus(p_\alpha+\langle\Lambda_\ell\rangle+H):\alpha\in\mr{\Phi}_s\}\cup \{\alpha\oplus\Lambda_\ell:\alpha\in\mr{\Phi}_\ell\}.$$ In particular, $\Psi$ is of the form Lemma \ref{lemrednotoftypeB}(\ref{lemdetnotm}). This completes the proof.
	\end{proof}

	\noindent We are now ready to prove the main result of this section.
	
	\begin{thm}\label{thmrednotoftypeB}
		Let $\Phi$ be an irreducible reduced affine reflection system such that $\mr{\Phi}$ is not of type $B_I.$ A subset $\Psi$ of $\Phi$ is a maximal root subsystem of $\Phi$ if and only if one of the following holds.
		\begin{enumerate}
			\item $\Psi$ is defined as in Lemma \ref{lemrednotoftypeB}.
			\item $\Psi$ is defined as in Proposition \ref{proprednotoftypeB}(\ref{grproper}) and one of the following holds:
			\begin{enumerate}
				\item $\mathrm{Gr}(\Psi)\neq \mr{\Phi}_s,\mr{\Phi}_\ell.$ 
				\item $\mathrm{Gr}(\Psi)=\mr{\Phi}_s$ and $\Lambda_\ell= m\Lambda_s.$
				\item\label{longmaximal} $\mathrm{Gr}(\Psi)=\mr{\Phi}_\ell$ and $\langle\Lambda_\ell\rangle= \Lambda_s.$
			\end{enumerate}

		\end{enumerate}
	\end{thm}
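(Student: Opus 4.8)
The plan is to assemble the statement from the results already established — Proposition \ref{proprednotoftypeB}, Lemma \ref{lemrednotoftypeB} and Lemma \ref{redmaxrootsubsystems} — the only new work being to decide which of the ``saturated'' subsystems in Proposition \ref{proprednotoftypeB}(\ref{grproper}) are genuinely maximal and to read off the conditions $(a)$--$(c)$ from maximality. So let $\Psi$ be a maximal root subsystem of $\Phi$. By Proposition \ref{proprednotoftypeB}, either $\Psi$ is one of the three families of Lemma \ref{lemrednotoftypeB} (this is case $(1)$ of the theorem), or $\mathrm{Gr}(\Psi)$ is a maximal root subsystem of $\mr{\Phi}$ with $Y_\alpha(\Psi)=\Lambda_\alpha$ for all $\alpha\in\mathrm{Gr}(\Psi)$ (case $(2)$). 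In the latter situation I would invoke Lemma \ref{redmaxrootsubsystems}: a maximal root subsystem of $\mr{\Phi}$ can equal $\mr{\Phi}_s$ only in types $G_2$ and $C_I$, and can equal $\mr{\Phi}_\ell$ only in types $G_2$ and $B_I$, the latter being excluded here. Hence $\mathrm{Gr}(\Psi)\neq\mr{\Phi}_s,\mr{\Phi}_\ell$ (subcase $(a)$), or $\mathrm{Gr}(\Psi)=\mr{\Phi}_s$, or $\mathrm{Gr}(\Psi)=\mr{\Phi}_\ell$. If $\mathrm{Gr}(\Psi)=\mr{\Phi}_s$ and $\Lambda_\ell\neq m\Lambda_s$, then Lemma \ref{lemrednotoftypeB}(\ref{lemshortfull}) produces a proper root subsystem $\Psi'$ with $\Psi\subsetneq\Psi'\subsetneq\Phi$ (it contains the short part of $\Psi$ and has a nonempty long part), contradicting maximality; so $\Lambda_\ell=m\Lambda_s$, giving subcase $(b)$. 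Likewise, if $\mathrm{Gr}(\Psi)=\mr{\Phi}_\ell$ and $\langle\Lambda_\ell\rangle\neq\Lambda_s$, then Lemma \ref{lemrednotoftypeB}(\ref{lemdetm}) enlarges $\Psi$ properly, so $\langle\Lambda_\ell\rangle=\Lambda_s$, giving subcase $(c)$.

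\textbf{The ``if'' direction, easy cases.}
Case $(1)$ is exactly Lemma \ref{lemrednotoftypeB}. For case $(2)$, write $\Psi=\{\alpha\oplus\Lambda_\alpha:\alpha\in\mathrm{Gr}(\Psi)\}$ and suppose $\Psi\subsetneq\Gamma\subseteq\Phi$; from $\Lambda_\alpha=Y_\alpha(\Psi)\subseteq Y_\alpha(\Gamma)\subseteq\Lambda_\alpha$ for $\alpha\in\mathrm{Gr}(\Psi)$ one gets $\mathrm{Gr}(\Gamma)\supsetneq\mathrm{Gr}(\Psi)$, hence $\mathrm{Gr}(\Gamma)=\mr{\Phi}$ by maximality of $\mathrm{Gr}(\Psi)$. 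It then suffices to force $Y_\alpha(\Gamma)=\Lambda_\alpha$ for every $\alpha$, i.e.\ $\Gamma=\Phi$. Whenever $\mathrm{Gr}(\Psi)$ contains a short root, the fact that $\Lambda_s$ is a subgroup (valid since $\mr{\Phi}$ is not of type $B_I$, Section \ref{propexdat}) gives $Y_s'(\Gamma)=\Lambda_s$, hence $Y_\gamma(\Gamma)=\Lambda_\gamma$ for all short $\gamma$. In subcase $(b)$ ($\mathrm{Gr}(\Psi)=\mr{\Phi}_s$), the relation $Y_\ell'(\Gamma)+mY_s'(\Gamma)\subseteq Y_\ell'(\Gamma)$ of Section \ref{keysection} together with $Y_\ell'(\Gamma)\subseteq\Lambda_\ell=m\Lambda_s$ and $0\in Y_\ell'(\Gamma)$ forces $Y_\ell'(\Gamma)=m\Lambda_s=\Lambda_\ell$, and since $p_\alpha\in\Lambda_\ell$ this gives $Y_\alpha(\Gamma)=\Lambda_\ell$ for all long $\alpha$, so $\Gamma=\Phi$. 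In subcase $(c)$ ($\mathrm{Gr}(\Psi)=\mr{\Phi}_\ell$, hence $\mr{\Phi}=G_2$), one first gets $Y_\ell'(\Gamma)=\Lambda_\ell$ from a long root of $\mathrm{Gr}(\Psi)$ (here $\Lambda_\ell$ is a subgroup), and then $Y_s'(\Gamma)+Y_\ell'(\Gamma)\subseteq Y_s'(\Gamma)$ with $0\in Y_s'(\Gamma)$ forces $Y_s'(\Gamma)\supseteq\langle\Lambda_\ell\rangle=\Lambda_s$, so again $\Gamma=\Phi$.

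\textbf{The remaining subcase, which is the main obstacle.}
The crux is subcase $(a)$, where $\mathrm{Gr}(\Psi)$ has roots of both lengths: after obtaining $Y_\gamma(\Gamma)=\Lambda_\gamma$ for all short $\gamma$ as above, one must propagate $Y_\beta(\Gamma)=\Lambda_\ell$ from the long roots already in $\mathrm{Gr}(\Psi)$ to \emph{all} long roots of $\mr{\Phi}$. If $\mathrm{Gr}(\Psi)$ already contains every long root — which is automatic when $\mr{\Phi}=C_I$ (for the subsystems $C_J\times C_{I\setminus J}$) and also for $B_4\subset F_4$ — there is nothing to do. Otherwise $\mr{\Phi}$ is of type $F_4$ or $G_2$, so $\Lambda_\ell$ is a subgroup, and then for $\alpha$ short, and for $\alpha$ long with $Y_\alpha(\Gamma)=\Lambda_\ell$ already known, the relation $Y_\beta(\Gamma)-(\beta,\alpha^\vee)Y_\alpha(\Gamma)\subseteq Y_{s_\alpha(\beta)}(\Gamma)$ of Lemma \ref{lemm1} gives $\Lambda_\ell-(\beta,\alpha^\vee)\Lambda_s=\Lambda_\ell$ (since $(\beta,\alpha^\vee)\in\{0,\pm m\}$ and $m\Lambda_s\subseteq\Lambda_\ell$) and $\Lambda_\ell-(\beta,\alpha^\vee)\Lambda_\ell=\Lambda_\ell$, hence $Y_{s_\alpha(\beta)}(\Gamma)=\Lambda_\ell$ whenever $s_\alpha(\beta)$ is long. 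Since $\mr{W}_{\mathrm{Gr}(\Psi)}$ is a maximal reflection subgroup of $\mr{W}$ and, in this case, $\mathrm{Gr}(\Psi)$ does not contain all short roots, the reflections actually used generate a reflection subgroup strictly larger than $\mr{W}_{\mathrm{Gr}(\Psi)}$, hence all of $\mr{W}$, which acts transitively on the long roots; therefore $Y_\beta(\Gamma)=\Lambda_\ell$ for every long $\beta$ and $\Gamma=\Phi$. I expect this spreading argument — together with the bookkeeping of which long roots sit inside the maximal subsystems of $C_I$, $F_4$ and $G_2$ — to be the only delicate point; everything else follows directly from Section \ref{keysection}, Proposition \ref{proprednotoftypeB}, Lemma \ref{lemrednotoftypeB} and Lemma \ref{redmaxrootsubsystems}.
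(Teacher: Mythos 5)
Your overall architecture matches the paper's: case $(1)$ is Lemma \ref{lemrednotoftypeB} plus Proposition \ref{proprednotoftypeB}, the forward direction of $(2)$ comes from using Lemma \ref{lemrednotoftypeB}(\ref{lemshortfull}) and (\ref{lemdetm}) to enlarge $\widehat{\mr{\Phi}_s}$, $\widehat{\mr{\Phi}_\ell}$ when the lattice conditions fail, and the converse reduces to showing that any $\Gamma\supsetneq\Psi$ with $\mathrm{Gr}(\Gamma)=\mr{\Phi}$ must be all of $\Phi$. Your treatments of $(2b)$ and $(2c)$ are correct.

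There is, however, a genuine gap in your handling of subcase $(2a)$. Your ``spreading'' argument rests on the claim that whenever $\mathrm{Gr}(\Psi)$ fails to contain all long roots, it also fails to contain all short roots, so that the reflections at your disposal generate a reflection subgroup strictly larger than $\mr{W}_{\mathrm{Gr}(\Psi)}$ and hence all of $\mr{W}$. This is false for $\mr{\Phi}=F_4$: the maximal root subsystem $C_4\subset F_4$ (the dual of $B_4\subset F_4^\vee$, which you omit from your list alongside $B_4$) contains \emph{all} $24$ short roots of $F_4$ but only $8$ of the $24$ long roots. In that case the reflections you use — those in $\mr{\Phi}_s$ together with those in the long roots already reached — generate exactly $\mr{W}_{C_4}$, and the orbit of $(C_4)_\ell$ under this group is $(C_4)_\ell$ itself; your propagation never reaches the remaining $16$ long roots. (A similar accounting problem arises for $C_3\times A_1$ and $B_3\times A_1$ inside $F_4$, where one would have to verify by hand that the generated reflection subgroup is all of $\mr{W}(F_4)$.)

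The gap is easily repaired, and the repair is what the paper actually does: once $\mathrm{Gr}(\Gamma)=\mr{\Phi}$, Proposition \ref{propprimeised} makes $(Y'_\alpha(\Gamma))$ an extension datum on $\mr{\Phi}$, so by Proposition \ref{propsametyperoots} the set $Y'_\alpha(\Gamma)$ depends only on the length of $\alpha$. Hence $Y_\beta(\Gamma)=\Lambda_\ell$ for a single long $\beta\in\mathrm{Gr}(\Psi)$ already forces $Y'_\ell(\Gamma)=\Lambda_\ell$ (here $\Lambda_\ell$ is a group since $\mr{\Phi}$ is $F_4$ or $G_2$) and therefore $Y_{\beta'}(\Gamma)=\Lambda_\ell$ for every long $\beta'$; no Weyl-group orbit computation is needed. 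Equivalently, one can argue as the paper does that a proper $\Gamma$ with $\mathrm{Gr}(\Gamma)=\mr{\Phi}$ sits inside one of the three families of Lemma \ref{lemrednotoftypeB}, none of which has both $Y'_s=\Lambda_s$ and $Y_\beta=\Lambda_\ell$ for some long $\beta$ (using Remark \ref{spremark}(2) for the family (\ref{lemdetnotm}) when $m=2$). Note also that you already use this length-invariance implicitly when you pass from ``$Y_\gamma(\Gamma)=\Lambda_s$ for one short $\gamma\in\mathrm{Gr}(\Psi)$'' to ``for all short $\gamma$''; applying the same principle to the long roots closes the gap and makes the ad hoc spreading argument unnecessary.
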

	\begin{proof}
		$(1)$ follows from Lemma \ref{lemrednotoftypeB} and Proposition \ref{proprednotoftypeB}. The forward direction of $(2)$ follows from Lemma \ref{lemrednotoftypeB}. To prove the converse, assume that $\Psi$ be defined by Proposition \ref{proprednotoftypeB}(\ref{grproper}). If $\mathrm{Gr}(\Psi)\neq \mr{\Phi}_s,\mr{\Phi}_\ell,$ then $\mathrm{Gr}(\Psi)$ contains both short and long roots. Hence any root subsystem $\Gamma$ containing $\Psi$ must satisfy $\Gamma=\Phi$ by  Proposition \ref{proprednotoftypeB}(\ref{subrootnotb}), Lemma \ref{lemrednotoftypeB} and Remark \ref{spremark}(2) and thus proving $(a).$ Similar arguments apply to $(b)$ and $(c).$
	\end{proof}
	\begin{cor}
		If $\mr{\Phi}_s$ is a maximal root subsystem of $\mr{\Phi}$, then the lift $\widehat{\mr{\Phi}_s}$ is a maximal root subsystem if  and only if $\Lambda_\ell=m\Lambda_s.$ Similarly, If $\mr{\Phi}_\ell$ is a maximal root subsystem of $\mr{\Phi}$, then the lift $\widehat{\mr{\Phi}_\ell}$ is a maximal root subsystem if  and only if $\langle\Lambda_\ell\rangle=\Lambda_s.$
	\end{cor}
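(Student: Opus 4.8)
The plan is to read the statement off the classification Theorem \ref{thmrednotoftypeB}; essentially no new computation is needed. Recall that $\widehat{\mr{\Phi}_s}=\{\alpha\oplus\Lambda_s:\alpha\in\mr{\Phi}_s\}$, so $\mathrm{Gr}(\widehat{\mr{\Phi}_s})=\mr{\Phi}_s$ and $Y_\alpha(\widehat{\mr{\Phi}_s})=\Lambda_\alpha$ for every $\alpha\in\mathrm{Gr}(\widehat{\mr{\Phi}_s})$. Thus, once we assume that $\mr{\Phi}_s$ is a maximal root subsystem of $\mr{\Phi}$ --- which by Lemma \ref{redmaxrootsubsystems} forces $\mr{\Phi}$ to be of type $G_2$ or $C_I$, in particular not of type $B_I$, so Theorem \ref{thmrednotoftypeB} applies --- the subset $\widehat{\mr{\Phi}_s}$ is exactly of the shape described in Proposition \ref{proprednotoftypeB}(\ref{grproper}).

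Next I would note that $\widehat{\mr{\Phi}_s}$ is not a subset of the shape in Lemma \ref{lemrednotoftypeB}: in each of the three families there, $\alpha$ ranges over both $\mr{\Phi}_s$ and $\mr{\Phi}_\ell$, and the relevant sets $S$, $H$, $(p_\alpha+H)\cap\Lambda_\ell$ all contain $0$, so such a $\Psi$ has $\mathrm{Gr}(\Psi)=\mr{\Phi}$, whereas $\mathrm{Gr}(\widehat{\mr{\Phi}_s})=\mr{\Phi}_s\subsetneq\mr{\Phi}$. Hence, by Theorem \ref{thmrednotoftypeB} (whose case (1) we have just excluded), $\widehat{\mr{\Phi}_s}$ is a maximal root subsystem of $\Phi$ if and only if it satisfies one of the conditions (a), (b), (c) in case (2) of that theorem. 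Since $\mathrm{Gr}(\widehat{\mr{\Phi}_s})=\mr{\Phi}_s$, conditions (a) and (c) cannot hold, while (b) holds precisely when $\Lambda_\ell=m\Lambda_s$. This gives the first assertion.

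For the second assertion the argument is the same with the roles of short and long roots exchanged: $\widehat{\mr{\Phi}_\ell}$ has $\mathrm{Gr}(\widehat{\mr{\Phi}_\ell})=\mr{\Phi}_\ell$ and $Y_\alpha=\Lambda_\alpha$, so again it is of the form in Proposition \ref{proprednotoftypeB}(\ref{grproper}) and not of the form in Lemma \ref{lemrednotoftypeB}; among (a), (b), (c) only (c) can hold for it, and (c) is precisely the condition $\langle\Lambda_\ell\rangle=\Lambda_s$. Here the hypothesis that $\mr{\Phi}_\ell$ is maximal in $\mr{\Phi}$ together with the standing assumption of this section forces $\mr{\Phi}$ to be of type $G_2$; the remaining possibility that $\mr{\Phi}$ is of type $B_I$ (cf.\ Lemma \ref{redmaxrootsubsystems}) is obtained in exactly the same way from the type-$B$ classification in Theorem \ref{thmredoftypeB}.

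I do not expect any genuine obstacle here: all the content sits in Theorem \ref{thmrednotoftypeB}. The two small points that need a line of care are the gradient computation showing that $\widehat{\mr{\Phi}_s}$ (resp.\ $\widehat{\mr{\Phi}_\ell}$) does not belong to any of the Lemma \ref{lemrednotoftypeB} families, and the bookkeeping of which of the conditions (a), (b), (c) is compatible with a prescribed gradient.
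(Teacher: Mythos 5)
Your proposal is correct and follows exactly the route the paper intends: the corollary is read off directly from Theorem \ref{thmrednotoftypeB} (and Theorem \ref{thmredoftypeB} for the type-$B_I$ case), with the only points needing care being that $\widehat{\mr{\Phi}_s}$ and $\widehat{\mr{\Phi}_\ell}$ have proper gradient and hence cannot fall under the Lemma \ref{lemrednotoftypeB} families, and that among conditions (2a)--(2c) only one is compatible with each prescribed gradient. Both points are handled correctly in your write-up.
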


	\section{Reduced affine reflection systems where \texorpdfstring{$\mr{\Phi}$}{Φ} is of type \texorpdfstring{$B_I$}{B}}\label{secredB}
	In this section we shall assume the case where $\mr{\Phi}$ is of type $B_I.$ Recall that $\Lambda_\ell$ is a subgroup of $Y$ but $\Lambda_s$ may not be a subgroup. We begin with the following Lemma.
	\begin{lem}\label{lemredoftypeB}
		Let $\Phi$ be an irreducible affine reflection system such that $\mr{\Phi}$ is of type $B_I.$ Let $\Psi$ be defined below for some $\bz$-linear function $p:\mr{\Phi}\to Y$ so that $p_\alpha\in Y_\alpha(\Psi)$ for all $\alpha\in\mr{\Phi}.$ Then $\Psi$ is a maximal root subsystem of $\Phi.$
		\begin{enumerate}
			\item\label{lemlongfullb} $\Lambda_s\neq \Lambda_\ell$ and $\Psi$ is given by
			$$\Psi=\{\alpha\oplus(p_\alpha+ S):p_\alpha+ S\subseteq \Lambda_s,\ \ \alpha\in \mr{\Phi}_s\}\cup \{\alpha\oplus \Lambda_\ell:\alpha\in\mr{\Phi}_\ell\},$$
			where $S$ is a  maximal union of cosets in $\langle \Lambda_s\rangle /\Lambda_\ell$ and  $S\subseteq \Lambda_s$ if $\Lambda_s$ is a subgroup of $Y$.
			
			\item\label{lemdetmb}  $\Lambda_\ell\neq \langle 2\Lambda_s\rangle$ and $\Psi$ is given by $$\Psi=\{\alpha\oplus \Lambda_s:\alpha\in \mr{\Phi}_s\}\cup \{\alpha\oplus (p_\alpha+H):\alpha\in\mr{\Phi}_\ell\},$$ where $H$ is a maximal subgroup of $\Lambda_\ell$ such that $2\Lambda_s\subseteq H.$ 
			\item\label{lemdetnotmb} $\Psi$ is given by 
			$$\Psi=\{\alpha\oplus (p_\alpha+S):p_\alpha+S\subseteq \Lambda_s,\ \alpha\in\mr{\Phi}_s\}\cup\{\alpha\oplus (p_\alpha+H):\alpha\in\mr{\Phi}_\ell\}$$
			where $H$ is a maximal subgroup of $\Lambda_\ell$ satisfying $H+\langle 2\Lambda_s\rangle=\Lambda_\ell$ and $S$ is maximal among the subsets of $\langle\Lambda_s \rangle$ 
			\begin{itemize}
				\item $S$ is a union of cosets in $\langle \Lambda_s \rangle/H$ and $H\subseteq S,$
				\item $2S\subseteq H.$ 
			\end{itemize}
			Moreover, $S$ is a subgroup of $\Lambda_s$ if $\Lambda_s$ is a group.
		\end{enumerate}
	\end{lem}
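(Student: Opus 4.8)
The first thing to observe is that each of the three families visibly defines a root subsystem of $\Phi$: writing a general pair as $\alpha_1=\alpha\oplus(p_\alpha+y)$, $\beta_1=\beta\oplus(p_\beta+y')$ one has $s_{\alpha_1}(\beta_1)=s_\alpha(\beta)\oplus\bigl(p_{s_\alpha(\beta)}+(y'-(\beta,\alpha^\vee)y)\bigr)$, and membership in $\Psi$ follows from the $\bz$-linearity of $p$ together with the inclusions recorded in Section \ref{lambdainclusions} (here $m=2$ and $\Lambda_\ell$ is a subgroup). The only slightly delicate case is $\beta=\pm\alpha$ with $\Lambda_s$ not a subgroup, which is settled exactly as in the proof of Proposition \ref{proprednotoftypeB} with the roles of short and long roots reversed; note in particular that the set $S$ in parts (1) and (3) automatically satisfies $S+2S\subseteq S$ since $2\langle\Lambda_s\rangle\subseteq\Lambda_\ell$ and $S$ is a union of cosets of a group containing $\Lambda_\ell$. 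So the real content is maximality, and the plan is to obtain it by duality rather than by repeating the lattice arguments from scratch.

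If $\mr{\Phi}$ is of type $B_I$, then $\Phi^\vee$ is a reduced irreducible affine reflection system whose gradient $\mr{\Phi}^\vee$ is of type $C_I$, hence not of type $B_I$, so Lemma \ref{lemrednotoftypeB} (with $m=2$) applies to $\Phi^\vee$; and by the discussion in Section \ref{sectiondefinitionp}, $\Psi$ is a maximal root subsystem of $\Phi$ if and only if $\Psi^\vee$ is a maximal root subsystem of $\Phi^\vee$. It therefore suffices to match, after the rescaling of the $X^0$-coordinate used in the Example of Section \ref{sectionduality} (so that $\Lambda_s^\vee=\Lambda_\ell$ and $\Lambda_\ell^\vee=2\Lambda_s$), the three families of this Lemma with the duals of Lemma \ref{lemrednotoftypeB}(1)--(3). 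Under this identification the datum $\Lambda_s$ of $\Phi$ corresponds to the datum $\Lambda_\ell$ of $\Phi^\vee$, while $\Lambda_\ell$ of $\Phi$ corresponds to $\langle 2\Lambda_s\rangle$ of $\Phi^\vee$ (the short and long roots being interchanged by duality), and the position ``constrained'' versus ``full'' on short/long roots is swapped accordingly. One then checks that $\Lambda_s\neq\Lambda_\ell$ becomes $\Lambda_\ell\neq 2\Lambda_s$ in $\Phi^\vee$, that $\Lambda_\ell\neq\langle2\Lambda_s\rangle$ becomes $\langle\Lambda_\ell\rangle\neq\Lambda_s$ in $\Phi^\vee$, and that the conditions on the subgroup $H$ and on the union-of-cosets set $S$ correspond, using that duality induces a linear isomorphism of $X/X^0$ which carries lattices to lattices, sublattices to sublattices, subgroups to subgroups and cosets to cosets, and in particular preserves maximality among any of these.

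The main obstacle is precisely this last translation: one has to keep track of the factor of $2$ between the two root lengths — this is exactly why the statement is phrased in terms of $\langle 2\Lambda_s\rangle$ and why part (3) carries the extra condition $2S\subseteq H$ — and one must verify that the intersection $(p_\alpha+H)\cap\Lambda_\ell$ occurring in Lemma \ref{lemrednotoftypeB}(3) dualizes to a set of the shape $p_\alpha+S$ with $S$ as in part (3). I would also record, as in Lemma \ref{lemrednotoftypeB}, the alternative direct argument: suppose $\Psi\subsetneq\Gamma\subseteq\Phi$ with $\Gamma$ a root subsystem; parts (1) and (2) give $\Gamma\in\{\Psi,\Phi\}$ immediately from Equations \eqref{primekeyeqn}--\eqref{primeextreaeqns}, whereas in part (3), once $\Gamma$ is full on the long roots, Lemma \ref{lemmaximalsubgroup} and Lemma \ref{keylemmalattice} — applied with $\Lambda=\Lambda_\ell$, which is a subgroup in type $B_I$ — force $\Gamma=\Phi$, and otherwise the maximality of $H$ in $\Lambda_\ell$ together with $H+\langle2\Lambda_s\rangle=\Lambda_\ell$ forces $\Gamma=\Psi$. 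Either way the crux is the interaction between the prime $p=d(H)/d(\Lambda_\ell)$ and the cosets of $\langle 2\Lambda_s\rangle$ inside $\Lambda_\ell$.
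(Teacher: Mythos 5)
Your reduction of maximality to Lemma \ref{lemrednotoftypeB} via duality is an attractive reorganization (and is consistent with the remark the paper itself makes after this Lemma), but you have placed the entire weight of the proof on the dictionary between the two sets of hypotheses and then not verified it --- and for part (\ref{lemdetnotmb}) the dictionary you assert is not correct. Under the identification $\Lambda_s^\vee=\Lambda_\ell$, $\Lambda_\ell^\vee=2\Lambda_s$, the hypothesis $m\Lambda_s^\vee+H=\Lambda_s^\vee$ of Lemma \ref{lemrednotoftypeB}(\ref{lemdetnotm}) applied to $\Phi^\vee$ becomes $2\Lambda_\ell+H=\Lambda_\ell$, i.e.\ $2\Lambda_\ell\not\subseteq H$, which is \emph{strictly stronger} than the condition $H+\langle 2\Lambda_s\rangle=\Lambda_\ell$ appearing in part (\ref{lemdetnotmb}) whenever $2\Lambda_\ell\subsetneq\langle 2\Lambda_s\rangle$. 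Concretely, take $\Lambda_s=\bz\times\bz$, $\Lambda_\ell=2\bz\times\bz$, $H=4\bz\times\bz$ and $p\equiv 0$: then $H$ is a maximal subgroup of $\Lambda_\ell$ with $H+\langle2\Lambda_s\rangle=\Lambda_\ell$, the maximal $S$ is $2\bz\times\bz=\Lambda_\ell$, and the resulting $\Psi$ is properly contained in the proper root subsystem $\{\alpha\oplus(p_\alpha+\Lambda_\ell):\alpha\in\mr{\Phi}_s\}\cup\{\beta\oplus\Lambda_\ell:\beta\in\mr{\Phi}_\ell\}$ of the form (\ref{lemlongfullb}). So the dual of this $\Psi$ is not of any of the three shapes of Lemma \ref{lemrednotoftypeB}, the reduction cannot close, and the gap you flag as ``the main obstacle'' is a genuine one: the argument only goes through if one works with the stronger hypothesis $d(H)/d(\Lambda_\ell)\neq 2$ (equivalently $2\Lambda_\ell\not\subseteq H$) in part (\ref{lemdetnotmb}). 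Your fallback ``direct argument'' has the same soft spot: once $\Gamma$ is full on the long roots you need $S$ not to sit inside a proper union of cosets of $\Lambda_\ell$ in $\Lambda_s$, and that is exactly what fails when $2\Lambda_\ell\subseteq H$, since then $S=\{a:2a\in H\}$ is itself $\Lambda_\ell$-stable.

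For comparison, the paper's own proof is a direct containment argument: it observes that any root subsystem properly containing a $\Psi$ of shape (\ref{lemdetnotmb}) must be full on the long roots, hence of shape (\ref{lemlongfullb}), and then argues it must equal $\Phi$; parts (\ref{lemlongfullb}) and (\ref{lemdetmb}) are dismissed as easy. Two further omissions in your write-up relative to the statement: you never prove the ``Moreover'' clause that $S$ is a subgroup of $\Lambda_s$ when $\Lambda_s$ is a group --- the paper does this by identifying $S$ as the union of exactly those cosets $a+H$ with $2a\in H$ and checking that $2(a+b)\in H$ for two such cosets --- and your verification that the displayed sets are root subsystems leans on ``the inclusions of Section \ref{lambdainclusions}'' without checking the one nontrivial compatibility, namely that $p_\alpha+S\subseteq\Lambda_s$ is preserved by the reflections in long roots, which uses $S+H\subseteq S$ and $\Lambda_s+\Lambda_\ell\subseteq\Lambda_s$.
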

	\begin{example}\label{funnyex}
		If $\Psi$ is given by Lemma \ref{lemredoftypeB}(\ref{lemdetnotmb}), then $S$ need not be a subgroup of $\langle\Lambda_s\rangle.$ Let $\Lambda_\ell=2\bz\times 2\bz,\Lambda_s=\{(0,0),(1,0),(0,1)\}+\Lambda_\ell.$ Then $H$ and $S$ given below (with $p_\alpha=0$ for all $\alpha\in\mr{\Phi}$) satisfy all the required properties:
		$$H=2\bz\times 4\bz,\ \ \ S=\Lambda_\ell\cup (\{(1,0),(1,2)\}+H).$$
		We shall classify $S$ for rank $2$ in Section \ref{secnul2}. Also note that any $S$ given by Lemma \ref{lemredoftypeB}(\ref{lemdetnotmb}) satisfies $S=-S$ and is a proper subset of $\Lambda_s$ since $2\Lambda_s\not\subseteq H$.
	\end{example}

	\begin{proof}
		Only the proof of $(3)$ is non-trivial and the proof of the first part of $(3)$ is similar to Lemma \ref{lemrednotoftypeB}(\ref{lemdetnotm}) using the fact that any maximal root subsystem that contains (\ref{lemdetnotmb}) is of the form (\ref{lemlongfullb}). Now assume that $\Lambda_s$ is a group. Since $S$ is maximal, it follows that $S$ is precisely the union of those cosets $a+H$ such that $2a\in H.$ Moreover, for any two cosets $a+H,b+H\subseteq S,$ their sum satisfies $2(a+b)+H\subseteq H.$ Therefore, $S$ is closed under addition and thus $S$ is a subgroup of $\Lambda_s$.
	\end{proof}
	\begin{rem}
		Note that if $\Psi$ is given by Lemma \ref{lemredoftypeB}(\ref{lemlongfullb}), (\ref{lemdetmb}) and (\ref{lemdetnotmb}), then the dual $\Psi^\vee$ is given by Lemma \ref{lemrednotoftypeB}(\ref{lemshortfull}), (\ref{lemdetm}) and (\ref{lemdetnotm}) respectively.
	\end{rem}
	
	The next Proposition provides the necessary conditions a maximal root subsystem must satisfy. It in turn proves the converse of Lemma \ref{lemredoftypeB} when $\mathrm{Gr}(\Psi)=\mr{\Phi}.$
	\begin{prop}\label{propredoftypeB}
		Let $\Phi$ be an irreducible affine reflection system such that $\mr{\Phi}$ is of type $B_I.$ Let $\Psi$ be a maximal root subsystem of $\Phi.$ Then exactly one of the following holds.
		\begin{enumerate}
			\item\label{grproperb} $\mathrm{Gr}(\Psi)$ is a maximal root subsystem of $\mr{\Phi}$ and $Y_\alpha(\Psi)=\Lambda_\alpha\ \forall \alpha\in \mathrm{Gr}(\Psi).$
			\item\label{subrootb} $\mathrm{Gr}(\Psi)=\mr{\Phi},$ there exist a $\bz$-linear function $p:\mathrm{Gr}(\Psi)\to Y$ satisfying $p_\alpha\in Y_\alpha(\Psi),\ \forall \alpha\in \mathrm{Gr}(\Psi)$ and $\Psi$ is given by Lemma \ref{lemredoftypeB}.
		\end{enumerate}
	\end{prop}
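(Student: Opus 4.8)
The plan is to run the same argument as in the proof of Proposition~\ref{proprednotoftypeB}, with the roles of short and long roots interchanged (so that $\Lambda_\ell$, a subgroup by hypothesis — the Mild Assumption handling $B_2$ — plays the role $\Lambda_s$ did there). The backbone is the dichotomy on $\mathrm{Gr}(\Psi)$. If $\mathrm{Gr}(\Psi)\subsetneq\mr{\Phi}$, then $\mathrm{Gr}(\Psi)$ is automatically a maximal root subsystem of $\mr{\Phi}$ with $Y_\alpha(\Psi)=\Lambda_\alpha$ for all $\alpha\in\mathrm{Gr}(\Psi)$: were either of these to fail, one could enlarge $\Psi$ (replace some $Y_\alpha(\Psi)$ by $\Lambda_\alpha$, or lift a larger gradient with full coordinates) without reaching $\Phi$, contradicting maximality; this is (\ref{grproperb}). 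Since it forces $\mathrm{Gr}(\Psi)\subsetneq\mr{\Phi}$ while (\ref{subrootb}) will force $\mathrm{Gr}(\Psi)=\mr{\Phi}$, the two alternatives are mutually exclusive, so it remains to treat the case $\mathrm{Gr}(\Psi)=\mr{\Phi}$.

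In that case choose, via Section~\ref{sectiondefinitionp}, a $\bz$-linear $p\colon\mr{\Phi}\to Y$ with $p_\alpha\in Y_\alpha(\Psi)$, and (using irreducibility, Section~\ref{keysection}) set $Y_s'=Y_s'(\Psi)$, $Y_\ell'=Y_\ell'(\Psi)$, which obey \eqref{primekeyeqn} and the $B$-type relations $\Lambda_s+\Lambda_\ell\subseteq\Lambda_s$, $\Lambda_\ell+2\Lambda_s\subseteq\Lambda_\ell$ from Section~\ref{lambdainclusions}. First split on whether $Y_\ell'=\Lambda_\ell$. If $Y_\ell'=\Lambda_\ell$, then $Y_s'$ must be the maximal union of cosets of $\Lambda_\ell$ inside $\langle\Lambda_s\rangle$ compatible with being a root subsystem and with $p_\alpha+Y_s'\subseteq\Lambda_s$, and when $\Lambda_s$ happens to be a subgroup of $Y$ it is forced to be a subgroup by the argument in the proof of Lemma~\ref{lemredoftypeB}(\ref{lemdetnotmb}); since $\Lambda_s\neq\Lambda_\ell$ (else $\Psi=\Phi$), this is form~(\ref{lemlongfullb}). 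If $Y_\ell'\subsetneq\Lambda_\ell$, pick a maximal subgroup $H$ of $\Lambda_\ell$ with $Y_\ell'\subseteq H$, check by reflection-closure (via the relations above, exactly as in the proof of Proposition~\ref{proprednotoftypeB}) that the subsystem with long coordinates $p_\alpha+H$ and short coordinates the corresponding maximal set contains $\Psi$, so by maximality $\Psi$ has long coordinates $p_\alpha+H$; then split once more — if $\langle 2\Lambda_s\rangle\subseteq H$ (the analogue of ``Case~1''), enlarging the short coordinates to $\Lambda_s$ still yields a root subsystem $\supseteq\Psi$, forcing the short coordinates of $\Psi$ to be full, whence $\Lambda_\ell\neq\langle 2\Lambda_s\rangle$ and $\Psi$ is of form~(\ref{lemdetmb}); if $\langle 2\Lambda_s\rangle\not\subseteq H$ (the analogue of ``Case~2''), then $H+\langle 2\Lambda_s\rangle=\Lambda_\ell$ must hold, otherwise a root subsystem strictly between $\Psi$ and $\Phi$ exists, and $\Psi$ is of form~(\ref{lemdetnotmb}). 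Together with Lemma~\ref{lemredoftypeB} this gives the equivalence.

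A cleaner alternative is to deduce the proposition from Proposition~\ref{proprednotoftypeB} by duality: the gradient of $\Phi^\vee$ is of type $C_I$, hence not of type $B_I$, and $\Phi^\vee$ (whose datum is $\Lambda_x$ rescaled, with short and long swapped, so that $\Lambda_s(\Phi^\vee)$ corresponds to the subgroup $\Lambda_\ell$) satisfies the standing hypotheses; since $\Psi\mapsto\Psi^\vee$ is a bijection between maximal root subsystems with $\mathrm{Gr}(\Psi^\vee)=\mathrm{Gr}(\Psi)^\vee$ and $Y_{\gamma^\vee}(\Psi^\vee)=\tfrac{2}{(\gamma,\gamma)}Y_\gamma(\Psi)$ — so ``$\mathrm{Gr}$ maximal'' and ``full coordinates'' are preserved — Proposition~\ref{proprednotoftypeB} applied to $\Psi^\vee$ yields case~\ref{grproper} (dualizing to~(\ref{grproperb})) or $\mathrm{Gr}(\Psi^\vee)=\mathring{\Phi^\vee}$ with $\Psi^\vee$ of one of the three forms of Lemma~\ref{lemrednotoftypeB}; forms~(\ref{lemshortfull}),~(\ref{lemdetm}),~(\ref{lemdetnotm}) for $\Psi^\vee$ then correspond, by the computation of the Remark following Lemma~\ref{lemredoftypeB} read in the direction $(\Phi^\vee)^\vee\cong\Phi$, to forms~(\ref{lemlongfullb}),~(\ref{lemdetmb}),~(\ref{lemdetnotmb}) for $\Psi$, which is~(\ref{subrootb}).

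The main obstacle, in either route, is the bookkeeping caused by $\Lambda_s$ possibly not being a subgroup of $Y$. In the direct route one must verify that each auxiliary enlargement built to contradict maximality is genuinely a root subsystem of $\Phi$: the only nonroutine reflections are $s_\alpha(\beta)$ with $\alpha,\beta$ short and $\beta=\pm\alpha$, which amounts to $C_{-\alpha}+2C_\alpha\subseteq C_\alpha$ and follows from $\Lambda_s+2\Lambda_s\subseteq\Lambda_s$; and in the subcase $\langle 2\Lambda_s\rangle\not\subseteq H$ one needs Lemma~\ref{lemmaximalsubgroup} (to ensure the relevant index is a prime $\neq2$) together with Lemma~\ref{keylemmalattice} to force the short part to absorb entire cosets when ruling out intermediate subsystems. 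In the duality route the delicate point is instead that the side conditions of Lemma~\ref{lemrednotoftypeB} (subgroup versus union of cosets, and equalities such as $\Lambda_\ell(\Phi^\vee)\neq m\Lambda_s(\Phi^\vee)$ and $m\Lambda_s(\Phi^\vee)+H=\Lambda_s(\Phi^\vee)$) translate exactly into those of Lemma~\ref{lemredoftypeB}; this uses the equivalences ``$\Lambda_s(\Phi^\vee)$ a subgroup $\Leftrightarrow$ $\Lambda_\ell$ a subgroup'' and ``$\Lambda_\ell(\Phi^\vee)$ a subgroup $\Leftrightarrow$ $\Lambda_s$ a subgroup'' together with the scaling $p^\vee_{\gamma^\vee}=2p_\gamma/(\gamma,\gamma)$. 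Everything else — that each $\Psi$ listed in Lemma~\ref{lemredoftypeB} is in fact maximal — is already established in Lemma~\ref{lemredoftypeB}.
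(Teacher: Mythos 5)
Your direct route is essentially the paper's own proof: the dichotomy on $\mathrm{Gr}(\Psi)$, then (for full gradient) the split on whether $Y_\ell'=\Lambda_\ell$, and finally the two subcases according to whether $\langle 2\Lambda_s\rangle\subseteq H$, matching the paper's Case 1 and Case 2. The only organizational difference is in the subcase $Y_\ell'\subsetneq\Lambda_\ell$: you first enlarge $Y_\ell'$ to a maximal subgroup $H$ and invoke maximality of $\Psi$, whereas the paper works with $H=Y_\ell'$ directly and, in Case 2, proves maximality of $H$ by passing to a strictly larger subgroup $H'$ and building the intermediate subsystem $\Psi''$ (the verification $2Y_s''\subseteq H'$, the paper's Equation \eqref{eq0}, is exactly the point you summarize as ``$2(Y_s'+H)\subseteq H$ since $2Y_s'\subseteq Y_\ell'$''); these are the same idea in a different order. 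Your duality alternative is a genuinely different route that the paper does not take: it is plausible, and the paper's own remark after Lemma \ref{lemredoftypeB} records the correspondence of forms you need, but to make it a complete proof you would have to check that $\Phi^\vee$ actually satisfies the standing hypotheses of Proposition \ref{proprednotoftypeB} (including the $B_2$ versus $C_2$ labeling convention and the Mild Assumption) and carry out the translation of every side condition ($\Lambda_\ell(\Phi^\vee)\neq m\Lambda_s(\Phi^\vee)$ becoming $\Lambda_s\neq\Lambda_\ell$, and so on) rather than only asserting it; you correctly flag this as the delicate point, so I would regard the duality argument as a sketch and the direct argument as the actual proof.
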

	\begin{proof}
		Let $\Psi$ be a maximal root subsystem of $\Phi.$ If $\mathrm{Gr}(\Psi)$ is proper in $\mr{\Phi},$ then obviously $\mathrm{Gr}(\Psi)$ is a maximal root subsystem of $\mr{\Phi}$ and $Y_\alpha(\Psi)=\Lambda_\alpha\ \forall\alpha\in \mathrm{Gr}(\Psi).$ So assume that $\mathrm{Gr}(\Psi)=\mr{\Phi}.$ We define a $\bz$-linear function $p:\mr{\Phi}\to \Lambda_s$ and $Y_s',Y_\ell'$ as in Section \ref{keysection}. Note that $Y_\ell'$ is always a subgroup of $\Lambda_\ell.$\medskip
		
		First assume that $Y_\ell'=\Lambda_\ell.$ We have that $Y_s'$ is a union of cosets in $\langle\Lambda_s\rangle/\Lambda_\ell$ containing $\Lambda_\ell$ and hence $\Lambda_\ell\neq\Lambda_s.$  Now $\Psi'$ defined below is a root subsystem which satisfies $\Psi\subseteq \Psi':$
		$$\Psi':=\{\alpha\oplus S:\alpha\in \mr{\Phi}_s\}\cup \{\alpha\oplus\Lambda_\ell:\alpha\in \mr{\Phi}_\ell\},$$
		where $S$ is a maximal union of cosets in $\Lambda_s/\Lambda_\ell$ so that $\Lambda_\ell\subseteq S.$ Therefore $\Psi=\Psi'.$ In addition this case occurs only if $\Lambda_\ell\neq \Lambda_s$ and thus $\Psi$ is of the form Lemma \ref{lemredoftypeB}(\ref{lemlongfullb}).\medskip
		
		\noindent Now assume that $Y_\ell'=H,$ a proper subgroup of $\Lambda_\ell.$ Define a root subsystem $\Psi'$ containing $\Psi$ by 
		$$\Psi':=\{\alpha\oplus \Lambda_s:\alpha\in \mr{\Phi}_s\}\cup \{\alpha\oplus (p_\alpha+H+\langle 2\Lambda_s\rangle):\alpha\in\mr{\Phi_\ell}\}.$$

		\textit{Case 1:} If $\Psi'=\Psi,$ then we have $2\Lambda_s\subseteq H$ and $H$ is a maximal subgroup of $\Lambda_\ell.$ Hence we must have $\Lambda_\ell\neq \langle 2\Lambda_s\rangle.$ In particular, $\Psi$ is of the form Lemma \ref{lemredoftypeB}(\ref{lemdetmb}).
		
		\textit{Case 2:} If $\Psi'=\Phi,$ then the subgroup $H$ satisfies $H+\langle 2\Lambda_s\rangle=\Lambda_\ell.$ We claim that $H$ is a maximal subgroup of $\Lambda_\ell$ satisfying Lemma \ref{lemredoftypeB}(\ref{lemdetnotmb}). If possible assume that there exists a proper subgroup $H'$ of $\Lambda_\ell$ such that $H\subsetneq H'.$ Since $Y_s'(\Psi)$ is a union of cosets in $\langle \Lambda_s\rangle/H$ containing $H,$ (c.f. Equation \eqref{primekeyeqn}),we can write $Y_s'(\Psi)=\cup_{x\in A} (x+H)$ for some $A\subseteq\langle\Lambda_s\rangle$ such that $0\in A.$  Moreover, $2Y_s'\subseteq H$ implies that $2x\in H$ for all $x\in A.$ Now we define $Y_s''=\cup_{x\in A} (x+H').$ Note that we must have $p_\alpha+Y_s''\subseteq \Lambda_s,\ \forall \alpha\in\mr{\Phi}_s$ since $p_\alpha+Y_s'(\Psi)\subseteq \Lambda_s,\ \forall \alpha\in\mr{\Phi}_s$ and $\Lambda_s+\Lambda_\ell\subseteq \Lambda_s.$ Now $2x\in H\subseteq H'$ for all $x\in A$ implies
		\begin{equation}\label{eq0}
			2Y_s''=\cup (2x+2H')\subseteq \cup (2x+H')=H'.
		\end{equation}
		
		\noindent Therefore $Y_s''+2Y_s''\subseteq Y_s''.$ Now define $\Psi''$ by 
		$$\Psi''=\{\alpha\oplus (p_\alpha+Y_s''):\alpha\in\mr{\Phi}_s\}\cup\{\alpha\oplus (p_\alpha+H'):\alpha\in \mr{\Phi}_\ell\}.$$
		Clearly $\Psi\subsetneq \Psi''\subsetneq \Phi$ and the following relations show that (the other relations are easy to verify) $\Psi''$ is a root subsystem of $\Phi:$
		$$Y_s''+2Y_s''\subseteq Y_s'',\ \ \ Y_s''+H'\subseteq Y_s'', \ \ \ H'+2Y_s''\subseteq H'+H'\subseteq H'\ \ (\text{by Equation}\eqref{eq0})$$ 
		which contradicts the fact that $\Psi$ is maximal and the claim follows. Now it is clear that $\Psi$ is of the form Lemma \ref{lemredoftypeB}(\ref{lemdetnotmb}). This completes the proof.
	\end{proof}
	
	\noindent We are now ready to prove the main result of this section.
	\begin{thm}\label{thmredoftypeB}
		Let $\Phi$ be an irreducible affine reflection system such that $\mr{\Phi}$ is of type $B_I.$ A subset $\Psi$ of $\Phi$ is a maximal root subsystem of $\Phi$ if and only if one of the following holds.
		\begin{enumerate}
			\item $\Psi$ is defined by Proposition \ref{propredoftypeB}(\ref{subrootb}).
			\item $\mathrm{Gr}(\Psi)$ is a maximal root subsystem of $\mr{\Phi}$ and one of the following holds:
			\begin{enumerate}
				\item\label{opa} $\mathrm{Gr}(\Psi)= \mr{\Phi}_\ell$ and  $\Lambda_\ell=\Lambda_s.$
				\item\label{opb} $\mathrm{Gr}(\Psi)\neq \mr{\Phi}_\ell$
			\end{enumerate}
		\end{enumerate}
	\end{thm}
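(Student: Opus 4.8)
The plan is to run the template of the proof of Theorem~\ref{thmrednotoftypeB}, with Lemma~\ref{lemredoftypeB} and Proposition~\ref{propredoftypeB} replacing Lemma~\ref{lemrednotoftypeB} and Proposition~\ref{proprednotoftypeB}. Statement~(1) asserts that the maximal root subsystems $\Psi$ with $\mathrm{Gr}(\Psi)=\mr{\Phi}$ are exactly those of Proposition~\ref{propredoftypeB}(\ref{subrootb}); one half of this is Lemma~\ref{lemredoftypeB} (the listed subsystems are maximal), the other half is the $\mathrm{Gr}(\Psi)=\mr{\Phi}$ conclusion of Proposition~\ref{propredoftypeB}. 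It then remains to analyse a maximal $\Psi$ with $\mathrm{Gr}(\Psi)\subsetneq\mr{\Phi}$: by Proposition~\ref{propredoftypeB}(\ref{grproperb}) such a $\Psi$ equals $\widehat{\mathrm{Gr}(\Psi)}$ with $\mathrm{Gr}(\Psi)$ a maximal root subsystem of $B_I$, and by Lemma~\ref{redmaxrootsubsystems} together with the lemma describing $\mr{\Psi}_J^B$ those are exactly $\mr{\Phi}_\ell$ and the $\mr{\Psi}_J^B$ ($\emptyset\neq J\subsetneq I$). So the crux is to decide, for each such $\mr{\Psi}$, whether the lift $\widehat{\mr{\Psi}}$ is maximal in $\Phi$.

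The key observation I would use is that every subsystem $\Gamma$ appearing in Lemma~\ref{lemredoftypeB} has $Y_\alpha(\Gamma)\subsetneq\Lambda_\alpha$ for some root $\alpha$: a short root with $Y_\alpha=p_\alpha+S\subsetneq\Lambda_s$ in families~(\ref{lemlongfullb}) and~(\ref{lemdetnotmb}), and a long root with $Y_\alpha=p_\alpha+H$ a coset of a proper subgroup $H\subsetneq\Lambda_\ell$ (hence not containing $\Lambda_\ell$) in families~(\ref{lemdetmb}) and~(\ref{lemdetnotmb}). Now if $\widehat{\mr{\Psi}}$ were not maximal, say $\widehat{\mr{\Psi}}\subsetneq\Gamma\subsetneq\Phi$, then maximality of $\mr{\Psi}$ in $B_I$ forces $\mathrm{Gr}(\Gamma)=\mr{\Phi}$, so $\Gamma$ is one of those families; I would compare the inclusions $Y_\alpha(\Gamma)\supseteq Y_\alpha(\widehat{\mr{\Psi}})=\Lambda_\alpha$ forced for $\alpha\in\mr{\Psi}$ against the observation. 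When $\mr{\Psi}=\mr{\Psi}_J^B$ — which for $|I|\ge 3$ contains every short root $\pm\epsilon_i$ together with a long root — no family can satisfy both sets of inclusions, so $\widehat{\mr{\Psi}_J^B}$ is always maximal, which is case~(b). When $\mr{\Psi}=\mr{\Phi}_\ell$, the forced long-root inclusions kill families~(\ref{lemdetmb}) and~(\ref{lemdetnotmb}), leaving only family~(\ref{lemlongfullb}), which is available precisely when $\Lambda_s\neq\Lambda_\ell$; hence $\widehat{\mr{\Phi}_\ell}$ is maximal iff $\Lambda_s=\Lambda_\ell$, which is case~(a), and when $\Lambda_s\neq\Lambda_\ell$ the subsystem of Lemma~\ref{lemredoftypeB}(\ref{lemlongfullb}) is the required enlargement.

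A cleaner alternative I would at least mention is duality: $\Phi^\vee$ is of type $C_I$, hence not of type $B_I$, and $\Psi\mapsto\Psi^\vee$ carries (maximal) root subsystems to (maximal) root subsystems (Section~\ref{sectiondefinitionp}). The Remark after Lemma~\ref{lemredoftypeB} matches the three families of Lemma~\ref{lemredoftypeB} with those of Lemma~\ref{lemrednotoftypeB}, and under the dual the short (resp.\ long) roots of the gradient $C_I$ of $\Phi^\vee$ correspond to $\mr{\Phi}_\ell$ (resp.\ $\mr{\Phi}_s$) while $\Lambda_s^\vee=\Lambda_\ell$ and $\Lambda_\ell^\vee=2\Lambda_s$; with these dictionary entries the three cases of Theorem~\ref{thmrednotoftypeB}(2) applied to $\Phi^\vee$ translate into precisely (2)(a)--(b) of the present statement, the translate of case~(c) being empty because $\mr{\Phi}_s$ is not a maximal root subsystem of $B_I$ (Lemma~\ref{redmaxrootsubsystems}).

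The step I expect to be the main obstacle is making the ``proper $Y_\alpha$'' comparison airtight. One must argue carefully that a proper coset of a proper subgroup of $\Lambda_\ell$ cannot contain the subgroup $\Lambda_\ell$, and prove the analogue for the coset sets $S\subsetneq\Lambda_s$ in families~(\ref{lemlongfullb}) and~(\ref{lemdetnotmb}) — where $S$ need not be a subgroup (cf.\ Example~\ref{funnyex}), so Lemma~\ref{keylemmalattice} is the right tool. Some care is also needed in low rank: for $B_2$ the set $\mr{\Psi}_J^B$ with $J$ a singleton consists only of short roots and coincides with $\mr{\Phi}_s$, so there the comparison in case~(b) must instead invoke the side conditions $\Lambda_\ell\neq\langle 2\Lambda_s\rangle$, $\Lambda_s\neq\Lambda_\ell$ built into Lemma~\ref{lemredoftypeB}(\ref{lemlongfullb}),(\ref{lemdetmb}) (and the Mild Assumption that $\Lambda_\ell$ is a subgroup in type $B_2$) — equivalently one takes $I\ge 3$ and lets $B_2=C_2$ be covered by Theorem~\ref{thmrednotoftypeB}.
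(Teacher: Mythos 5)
Your proposal is correct and follows essentially the same route as the paper: part (1) by combining Lemma \ref{lemredoftypeB} with Proposition \ref{propredoftypeB}, and part (2) by testing, for each lift $\widehat{\mathrm{Gr}(\Psi)}$ of a maximal root subsystem of $B_I$, whether it admits a proper full-gradient enlargement among the three families of Lemma \ref{lemredoftypeB} (the paper's own proof is exactly this, compressed into three sentences). Your supporting observations --- that each family has a proper fibre over a short or a long root, the duality shortcut through Theorem \ref{thmrednotoftypeB} for type $C_I$, and the $B_2$ caveat --- are sound elaborations of what the paper leaves implicit.
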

	\begin{proof}
		$(1)$ follows from the proof of Lemma \ref{lemredoftypeB} and Proposition \ref{propredoftypeB} and $2(a)$ follows from Lemma \ref{lemredoftypeB}(\ref{lemlongfullb}). If $\mathrm{Gr}(\Psi)\neq \mr{\Phi}_\ell,$ then $\mathrm{Gr}(\Psi)$ contains both long and short roots by Lemma \ref{redmaxrootsubsystems}. The proof of $2(b)$ now follows from Proposition \ref{propredoftypeB}.
	\end{proof}
	
	\subsection{Summary for the reduced case}
	Table \ref{Tablereduced} summarizes the closeness property of root subsystems that appeared in Section \ref{secrednotB} and Section \ref{secredB}. Recall the definition of $\Psi^\vee$ in Section \ref{sectiondefinitionp}. Note that if $\Psi$ or $\Psi^\vee$ is (real) closed in $\Phi$ or $\Phi^\vee$, then they are maximal closed root subsystem of $\Phi$ or $\Phi^\vee$ respectively and thus they appear in \cite[Theorem 3]{kus2021borel}. We investigate the properties of $\Psi$ case by case. 
	\begin{enumerate}[leftmargin=*]
		\item If $\Psi$ is of the form Lemma \ref{lemrednotoftypeB}(\ref{lemshortfull}): $\Psi$ is not closed since a maximal closed root subsystem $\Psi$ satisfies $Y_\alpha(\Psi)\neq \Lambda_s$ for any $\alpha\in\mr{\Phi}_s$.
		
		\item If $\Psi$ is of the form Lemma \ref{lemrednotoftypeB}(\ref{lemdetm}): Since $\Lambda_\ell\subseteq H$ we have that $(a+H)\cap \Lambda_\ell=\Lambda_\ell$ for all $a\in \Lambda_\ell.$ Hence $\Psi$ is closed.

		\item If $\Psi$ is of the form Lemma \ref{lemrednotoftypeB}(\ref{lemdetnotm}): $\Psi$ is closed follows directly from \cite[Theorem 3.3.(i)]{kus2021borel}.
		
		\item If $\Psi$ is of the form Theorem \ref{thmrednotoftypeB}(2a): If $\Phi$ is of type $C_I,$ then $\Psi$ is closed by Remark \ref{remfinitebcclosed}. Otherwise we have that $\mr{\Phi}\cong \mr{\Phi}^\vee$ by  Section \ref{sectionduality} and hence $\mathrm{Gr}(\Psi)$ is closed in $\mr{\Phi}$ by \cite[Corollary 2]{dyer2011reflection} and thus $\Psi$ is closed.
		\item If $\Psi$ is of the form Theorem \ref{thmrednotoftypeB}(2b): Choose $\alpha,\beta\in \mr{\Phi}_s$ such that $\alpha+\beta\in\mr{\Phi}_\ell$ and $a\in \Lambda_\ell\subseteq \Lambda_s.$ Then $\alpha+\beta\oplus a\in \Phi\backslash \Psi.$ Hence $\Psi$ is not closed.
		\item If $\Psi$ is of the form Theorem \ref{thmrednotoftypeB}(2c): Since the sum of two long roots is a long root, it follows that $\Psi$ is a closed root subsystem of $\Phi.$
		\item $\Psi$ is of the form Theorem \ref{lemredoftypeB}(\ref{lemlongfullb}): Clearly $\Psi$ is closed.
		\item If $\Psi$ is of the form Theorem \ref{lemredoftypeB}(\ref{lemdetmb}): $\Psi$ is not closed. If $\Psi$ is closed, then by \cite[Theorem 3.3.(ii)]{kus2021borel} there exists a maximal subgroup $H$ of $\Lambda_\ell$ such that $Y_\alpha(\Psi)=\Lambda_s=p_\alpha+Z$ where $Z$ is a union of cosets in $\langle \Lambda_s\rangle/H$ maximal with respect to the properties $H\subseteq Z,\ Z=-Z,\ (Z+Z)\cap\Lambda_\ell\subseteq H.$ Since $\Lambda_s$ is a union of cosets in $\langle\Lambda_s\rangle/\Lambda_\ell$ and $Z$ is a translate of $\Lambda_s,$ we can also write $Z$ as a union of cosets in $\langle\Lambda_s\rangle/\Lambda_\ell$. Since $\langle 2\Lambda_s\rangle\subseteq \Lambda_\ell,$ we have $\Lambda_\ell\subseteq (Z+Z)$ and therefore $\Lambda_\ell\subseteq (Z+Z)\cap\Lambda_\ell\subseteq H\subseteq \Lambda_\ell,$ a contradiction. 
		\item If $\Psi$ is of the form Lemma \ref{lemredoftypeB}(\ref{lemdetnotmb}): Consider Example \ref{funnyex}. Note that $(2,2)\in (S+S)\cap \Lambda_\ell$ but $(2,2)\notin H.$ In particular, $\Psi$ is not closed by \cite[Theorem 3(3)]{kus2021borel} .
	\end{enumerate}

	\begin{center}
		\begin{table}[ht]
			{\renewcommand{\arraystretch}{1.5}
				\renewcommand{\tabcolsep}{8pt}
				\begin{NiceTabular}{|c|c|c|c|c|}[hvlines]
					The form of $\Psi$ & $\Phi=C_I$ & The form of $\Psi^\vee$ &$\Psi$ is closed & $\Psi^\vee$ is closed \\
					\Block{2-1}{Lemma \ref{lemrednotoftypeB}(\ref{lemshortfull})} & \checkmark &Lemma \ref{lemredoftypeB}(\ref{lemlongfullb})  & \Block{2-1}{\xmark}  & \Block{2-1}{\checkmark}  \\
					
					& \xmark  & Lemma \ref{lemrednotoftypeB}(\ref{lemdetm}) &  & \\
					
					\Block{2-1}{Lemma \ref{lemrednotoftypeB}(\ref{lemdetm})} & \checkmark & Lemma \ref{lemredoftypeB}(\ref{lemdetmb})  &\Block{2-1}{\checkmark}  & \Block{2-1}{\xmark}  \\
					
					& \xmark & Lemma \ref{lemrednotoftypeB}(\ref{lemshortfull}) & &  \\
					
					\Block{2-1}{Lemma \ref{lemrednotoftypeB}(\ref{lemdetnotm})} &\checkmark & Lemma \ref{lemredoftypeB}(\ref{lemdetnotmb})  &\Block{2-1}{\checkmark}  & \xmark  \\
					
					&\xmark & Lemma \ref{lemrednotoftypeB}(\ref{lemdetnotm})  & &\checkmark \\
					
					\Block{2-1}{Theorem \ref{thmrednotoftypeB}(2a)} &\checkmark  & Theorem \ref{thmredoftypeB}(2b) &\Block{2-1}{\checkmark}  & \Block{2-1}{\checkmark}\\
					
					&\xmark  &Theorem \ref{thmrednotoftypeB}(2a)   &   &  \\
					
					\Block{2-1}{Theorem \ref{thmrednotoftypeB}(2b)}&\checkmark  & Theorem \ref{thmredoftypeB}(2a) &\Block{2-1}{\xmark}  &\Block{2-1}{\checkmark} \\
					
					&\xmark  &Theorem \ref{thmrednotoftypeB}(2c)   &   &  \\
					Theorem \ref{thmrednotoftypeB}(2c)&\xmark &Theorem \ref{thmrednotoftypeB}(2b) & \checkmark  &  \xmark
			\end{NiceTabular}}
			\caption{Maximal root subsystems of a reduced affine reflection system}
			\label{Tablereduced}
		\end{table}
	\end{center}
	
	We now have the analogous Theorem to \cite[Corollary 2]{dyer2011reflection}.
	\begin{thm}\label{thmclsdordualclsd}
		Let $\Psi$ be a maximal root subsystem of a reduced affine reflection system $\Phi$ (including type $A_1$). Then either $\Psi$ is closed in $\Phi$ or $\Psi^\vee$ is closed in $\Phi^\vee.$
	\end{thm}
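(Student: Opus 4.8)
The strategy is to reduce immediately to the irreducible case and then to invoke the classification theorems together with the case-by-case analysis already carried out in Table~\ref{Tablereduced}. First I would note that by Lemma~\ref{lemassumeirreducible}, if $\Psi$ is a maximal root subsystem of $\Phi = \Phi_1 \sqcup \cdots \sqcup \Phi_r$, then there is a unique index $i$ with $\Psi_i$ maximal in $\Phi_i$ and $\Psi_j = \Phi_j$ for $j \neq i$. Since closedness is checked componentwise and $(\Phi_j)^\vee$ is trivially closed in itself, it suffices to prove the statement when $\Phi$ has irreducible gradient. If the irreducible component in question is of type $A_1$, the claim must be checked separately (this is why the theorem statement carries the parenthetical ``(including type $A_1$)''); here the Mild Assumption excluding $A_1$ does not apply, so I would handle $A_1$ by a direct short argument, noting that for gradient $A_1$ one has only $\Lambda_s = \Lambda_\ell = \Lambda$ a subgroup, and a maximal root subsystem corresponds to a maximal sublattice $H$ of $\Lambda$, which is automatically closed.

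For the irreducible, non-$A_1$ case I would split according to whether the gradient $\mathring\Phi$ is of type $B_I$ or not, and invoke Theorem~\ref{thmrednotoftypeB} and Theorem~\ref{thmredoftypeB} respectively. These theorems enumerate all possible forms of $\Psi$: in the non-$B$ case, the forms coming from Lemma~\ref{lemrednotoftypeB}(1)--(3) and Theorem~\ref{thmrednotoftypeB}(2a)--(2c); in the $B$ case, the forms from Lemma~\ref{lemredoftypeB}(1)--(3) and Theorem~\ref{thmredoftypeB}(2a)--(2b). The key structural fact, recorded in the Remark after Lemma~\ref{lemredoftypeB} and in the Table, is that duality swaps these forms: Lemma~\ref{lemrednotoftypeB}(\ref{lemshortfull}), (\ref{lemdetm}), (\ref{lemdetnotm}) correspond under $\vee$ to Lemma~\ref{lemredoftypeB}(\ref{lemlongfullb}), (\ref{lemdetmb}), (\ref{lemdetnotmb}) when $\mathring\Phi$ is of type $C_I$ (whose dual is $B_I$), and a form of type $B_I$ dualizes to a form of type $C_I$; when $\mathring\Phi \cong \mathring\Phi^\vee$ (simply laced, $G_2$, $F_4$) the form is preserved. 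Then I would simply read off the last two columns of Table~\ref{Tablereduced}: in every row, at least one of the entries ``$\Psi$ is closed'' and ``$\Psi^\vee$ is closed'' is a checkmark.

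Concretely, the argument per row is: for the forms Lemma~\ref{lemrednotoftypeB}(\ref{lemdetm}), (\ref{lemdetnotm}) and Theorem~\ref{thmrednotoftypeB}(2a), (2c) and Lemma~\ref{lemredoftypeB}(\ref{lemlongfullb}), the subsystem $\Psi$ itself is closed — this was verified in items (2), (3), (4), (6), (7) of the summary discussion preceding the Table, using in places \cite[Theorem~3]{kus2021borel} and \cite[Corollary~2]{dyer2011reflection}. For the remaining forms — Lemma~\ref{lemrednotoftypeB}(\ref{lemshortfull}), Theorem~\ref{thmrednotoftypeB}(2b), Lemma~\ref{lemredoftypeB}(\ref{lemdetmb}), (\ref{lemdetnotmb}) — $\Psi$ fails to be closed (items (1), (5), (8), (9) of the summary), but in each such case the dual $\Psi^\vee$ takes one of the forms in the ``closed'' list: e.g.\ Lemma~\ref{lemrednotoftypeB}(\ref{lemshortfull}) dualizes either to Lemma~\ref{lemredoftypeB}(\ref{lemlongfullb}) (if $\Phi = C_I$) or to Lemma~\ref{lemrednotoftypeB}(\ref{lemdetm}) (otherwise), both closed; Theorem~\ref{thmrednotoftypeB}(2b) dualizes to Theorem~\ref{thmredoftypeB}(2a) or Theorem~\ref{thmrednotoftypeB}(2c), both closed; and Lemma~\ref{lemredoftypeB}(\ref{lemdetmb}), (\ref{lemdetnotmb}) dualize to Lemma~\ref{lemrednotoftypeB}(\ref{lemdetm}), (\ref{lemdetnotm}), both closed. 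Since $(\Psi^\vee)^\vee = \Psi$ and duality preserves maximality, this exhausts all cases.

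**Main obstacle.** The genuinely load-bearing input is the closedness/non-closedness verdict in each of the nine cases, and in particular the non-closed cases Lemma~\ref{lemredoftypeB}(\ref{lemdetmb}) and (\ref{lemdetnotmb}), where one must rule out closedness by comparing against the explicit description of maximal closed root subsystems in \cite[Theorem~3]{kus2021borel} — the argument in case (8) shows $\Lambda_\ell \subseteq H \subseteq \Lambda_\ell$ is forced, a contradiction, and case (9) exhibits an explicit element of $(S+S)\cap\Lambda_\ell$ not in $H$ via Example~\ref{funnyex}. Since all of this is already assembled in the summary subsection and the Table, the proof itself is short: it is the bookkeeping that duality sends each non-closed form to a closed form that needs to be stated cleanly. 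The $A_1$ edge case and the reduction to irreducible gradient are routine but must be mentioned explicitly because the theorem is stated to include $A_1$.
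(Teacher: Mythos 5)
Your proposal is correct and follows essentially the same route as the paper: reduce to the irreducible case via Lemma~\ref{lemassumeirreducible}, dispose of the $A_1$ gradient separately (where closedness is automatic since no two real roots sum to a real root), and read the remaining cases off Table~\ref{Tablereduced} together with the duality correspondence between the listed forms. Your write-up is more detailed in spelling out which form dualizes to which, but the substance is identical to the paper's argument.
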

	\begin{proof}
		Using Lemma \ref{lemassumeirreducible}, we can assume that $\Phi$ is irreducible. If $\mr{\Phi}$ is of type $A_1,$ the clearly $\Psi$ is closed in $\Phi.$ If $\mr{\Phi}$ is not of type $A_1$, the result follows from Table \ref{Tablereduced}.
	\end{proof}
	
	\section{Non-reduced affine reflection systems}\label{secnonred} In this section we shall assume that $\Phi$ is an affine reflection system such that $\mr{\Phi}$ is non-reduced i.e. $\mr{\Phi}$ is of type $BC_I.$ In this case, we have $$\mr{\Phi}_s=\{\pm\epsilon_i:i\in I\}, \ \ \mr{\Phi}_\ell=\{\pm\epsilon_i\pm\epsilon_j:i\neq j\in I\}, \ \ \mr{\Phi}_d=\{\pm 2\epsilon_i:i\in I\}, \ \ $$ By Section \ref{lambdainclusions} the following relations hold. 
	\begin{align*}
		\Lambda_\ell + 2 \Lambda_s \subseteq \Lambda_\ell,  &\hspace{30pt}
		\Lambda_s + \Lambda_\ell \subseteq \Lambda_s, \\
		\Lambda_d + 2 \Lambda_\ell \subseteq \Lambda_d,  &\hspace{30pt}
		\Lambda_\ell +\Lambda_d \subseteq \Lambda_\ell, \\
		\Lambda_d + 4 \Lambda_s \subseteq \Lambda_d,  &\hspace{30pt}
		\Lambda_s + \Lambda_d \subseteq \Lambda_s.
	\end{align*}
	
	Following \cite{kus2021borel,roy2019maximal} we define the following root subsystems of $\mr{\Phi}$. Let $J\subseteq I.$
	\begin{align*}
		A_J:=&\{\pm 2\epsilon_i:i\in I\}\cup \{\pm\epsilon_j:j\in J\}\cup \{\pm\epsilon_k\pm\epsilon_\ell:k,\ell\in J\text{ or }k,\ell\notin J\},\\
		B^I:=&\{\pm \epsilon_i:i\in I\}\cup\{\pm\epsilon_i\pm\epsilon_j:i,j\in I\}.
	\end{align*}
	Note that $A_{\emptyset}=C_I.$ We also define $$\Phi_B:=\{\pm\epsilon_i\oplus\Lambda_s:1\le i\le n\}\cup \{\pm\epsilon_i\pm\epsilon_j\oplus\Lambda_\ell:1\le i\neq j\le n\} =\widehat{B^I},$$
	$$\Phi_C:=\{\pm2\epsilon_i\oplus\Lambda_d:1\le i\le n\}\cup \{\pm\epsilon_i\pm\epsilon_j\oplus\Lambda_\ell:1\le i\neq j\le n\}=\widehat{A_\emptyset},$$
	$$\mr{\Phi}_{xy}:=\mr{\Phi}_x\cup \mr{\Phi}_y,\ \ \ x,y\in \{s,\ell,d\}.$$
	
	\noindent The next Lemma characterizes the maximal root subsystems of $\mr{\Phi}.$ 
	\begin{lem}\label{nonredgrmaximal}
		Maximal root subsystems of $BC_n$ are precisely $B^I$ and $A_J$ for some proper subset $J$ of $I.$
	\end{lem}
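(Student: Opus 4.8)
The plan is to verify directly that each of the listed subsets is a maximal root subsystem, and then conversely to show that a maximal root subsystem of $BC_I$ must be one of them, by analysing which of $\mr{\Phi}_s,\mr{\Phi}_\ell,\mr{\Phi}_d$ it meets.

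For the first part, $B^I$ is the standard copy of $B_I$ inside $BC_I$ and $A_J$ is the orthogonal direct sum $BC_J\oplus C_{I\setminus J}$ (with $A_\emptyset=C_I$), so both are root subsystems. If a root subsystem $\Psi$ strictly contains $B^I$ it contains some divisible root $2\epsilon_i$, and since $\epsilon_i-\epsilon_j\in B^I$ for every $j\neq i$ we get $2\epsilon_j=s_{\epsilon_i-\epsilon_j}(2\epsilon_i)\in\Psi$ for all $j$; hence $\mr{\Phi}_d\subseteq\Psi$ and $\Psi=BC_I$, so $B^I$ is maximal. If a root subsystem $\Psi$ strictly contains $A_J$ it must contain a short root $\epsilon_k$ with $k\notin J$, or a long root with one index in $J$ and the other in $I\setminus J$; in either case, using that $A_J$ already contains all of $\mr{\Phi}_d$ together with the long roots internal to $J$ and to $I\setminus J$, one checks that the reflections generated push $\Psi$ up to $BC_I$, so $A_J$ is maximal.

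For the converse, let $\Psi$ be a maximal root subsystem. Every $s_\alpha$ preserves root lengths, so $\Psi=\Psi_s\sqcup\Psi_\ell\sqcup\Psi_d$ with $\Psi_x:=\Psi\cap\mr{\Phi}_x$. If $\Psi_s=\emptyset$ then $\Psi\subseteq\mr{\Phi}_\ell\cup\mr{\Phi}_d=A_\emptyset$, so $\Psi=A_\emptyset$ by maximality; if $\Psi_d=\emptyset$ then $\Psi\subseteq\mr{\Phi}_s\cup\mr{\Phi}_\ell=B^I$, so $\Psi=B^I$. The remaining case, where $\Psi$ meets both $\mr{\Phi}_s$ and $\mr{\Phi}_d$, is the heart of the matter. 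Here I would set $J:=\{\,i\in I:\epsilon_i\in\Psi\,\}$ and argue that $J$ is a proper non-empty subset of $I$, that $\Psi$ contains every divisible root (a divisible root of $\Psi$ propagates along the long roots that $\Psi$ is forced to contain), and that $\Psi$ contains no long root crossing the partition $\{J,I\setminus J\}$ — such a root $\epsilon_k\pm\epsilon_\ell$ with $k\in J$, $\ell\notin J$ would yield $\mp\epsilon_\ell=s_{\epsilon_k\pm\epsilon_\ell}(\epsilon_k)\in\Psi$, contradicting $\ell\notin J$ — and then conclude that $\Psi=A_J$. I expect this last step to be the main obstacle: one must determine precisely, from the reflection relations in $BC_I$, exactly which short and long roots lie in $\Psi$ and see that the outcome is exactly $A_J$, whereas the two degenerate cases and the maximality statements are routine.
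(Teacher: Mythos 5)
Your treatment of $B^I$ and of $A_\emptyset=C_I$ is correct, and your reduction of the converse to the cases determined by which length classes $\Psi$ meets is essentially the paper's own (equally terse) strategy. The genuine gap is in the sentence ``one checks that the reflections generated push $\Psi$ up to $BC_I$'' for $A_J$ with $\emptyset\neq J\subsetneq I$: that check fails when the adjoined root is a short root $\epsilon_k$ with $k\notin J$. Set $\Psi_1:=A_J\cup\{\pm\epsilon_l:l\in I\setminus J\}=\mr{\Phi}_s\cup\mr{\Phi}_d\cup\{\pm\epsilon_k\pm\epsilon_l:k,l\in J\text{ or }k,l\notin J\}$. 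Every reflection available in $\Psi_1$ either changes the sign of a single coordinate ($s_{\epsilon_i}=s_{2\epsilon_i}$) or permutes and negates two coordinates lying in the same block of the partition $\{J,I\setminus J\}$; none of these ever produces a crossing long root $\epsilon_k\pm\epsilon_l$ with $k\in J$, $l\notin J$. Hence $\Psi_1$ is a root subsystem with $A_J\subsetneq\Psi_1\subsetneq BC_I$ (already visible in $BC_2$ with $J=\{1\}$: $\{\pm\epsilon_1,\pm2\epsilon_1,\pm2\epsilon_2\}\subsetneq\{\pm\epsilon_1,\pm\epsilon_2,\pm2\epsilon_1,\pm2\epsilon_2\}\subsetneq BC_2$), so $A_J$ is \emph{not} maximal for $\emptyset\neq J\subsetneq I$. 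The same example undermines your converse: in the mixed case you assert without argument that $J=\{i:\epsilon_i\in\Psi\}$ is proper, but $\Psi_1$ is itself a maximal root subsystem (adjoining any crossing long root and closing under reflections yields all of $BC_I$), it meets both $\mr{\Phi}_s$ and $\mr{\Phi}_d$, it has $J=I$, and it is neither $B^I$ nor any $A_J$.

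You should be aware that this is not a defect you could have repaired by working harder at the ``main obstacle'' you identified: the statement is false as formulated, and the paper's own proof shares the lacuna (it declares the maximality of $A_J$ and $B^I$ ``easy to check,'' and in the case $I_{\Psi}=I$ of the converse it invokes ``a similar argument'' to get $\Psi\subseteq B^I$, which ignores the possibility that $\Psi$ contains divisible roots). The sets $A_J$ are maximal among \emph{closed} root subsystems of $BC_I$ --- note that $\Psi_1$ is not closed, since $\epsilon_k+\epsilon_l\in BC_I\setminus\Psi_1$ for $k\in J$, $l\notin J$ --- which is the setting of \cite{roy2019maximal,kus2021borel} from which these sets are borrowed; once closedness is dropped, the list of maximal root subsystems must be enlarged by the subsystems $\mr{\Phi}_s\cup\mr{\Phi}_d\cup\{\pm\epsilon_k\pm\epsilon_l:k,l\in J\text{ or }k,l\notin J\}$ for $\emptyset\neq J\subsetneq I$, and the $A_J$ with $J\neq\emptyset$ must be removed.
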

	\begin{proof}
		It is easy to check that
		$A_J$ and $B_I$ defined above are maximal root subsystems of $\mr{\Phi}.$ 
		Conversely assume that $\mr{\Psi}$ is a maximal root subsystem of $\mr{\Phi}.$ Let $I_{\mr{\Psi}}:=\{i\in I:\epsilon_i\in \mr{\Psi}\}.$ If $I_{\mr{\Psi}}\subsetneq I,$ then we have that $\mr{\Psi}\subseteq A_{I_{\mr{\Psi}}}$ since $s_{\epsilon_i-\epsilon_j}(\epsilon_i)=\epsilon_j.$ It follows that $\mr{\Psi}=A_{I_{\mr{\Psi}}}.$ If $I_{\mr{\Psi}}=I,$ similar argument proves that $\mr{\Psi}\subseteq B^I$ and hence they are equal.
	\end{proof}
	\begin{rem}\label{AJhatmax}
		The root subsystem $\widehat{A_J}$ is a maximal root subsystem of $\Phi$ where $\emptyset\neq J\subsetneq I.$
	\end{rem}
	
	Let $\Psi$ be a root subsystem of $\Phi$. By considering $\Psi\cap\Phi_B$ as a root subsystem of $\Phi_B,$ we can define a $\bz$-linear function $p:\mathrm{Gr}(\Psi)\cap\mr{\Phi}_{s\ell}\to \Lambda_s$ such that $p_\alpha\in Y_\alpha(\Psi)$ for all $\alpha\in \mathrm{Gr}(\Psi)\cap\mr{\Phi}_{s\ell}.$ Hence $\Psi$ is of the form 
	\begin{equation}\label{eq2023d}
		\Psi=\{\alpha\oplus Y_\alpha(\Psi):\alpha\in\mr{\Phi}_d\}\cup \{\alpha\oplus(p_\alpha+Y_\ell'):\alpha\in \mr{\Phi}_{\ell}\}\cup\{\alpha\oplus(p_\alpha+Y_s'):\alpha\in \mr{\Phi}_{s}\}.
	\end{equation}
	
	\noindent \textit{We shall use this fact in this section without any further notice.} The next two Lemmas are the key Lemmas of this section. The first one determines when $\Phi_B$ is a maximal root subsystem of $\Phi.$ Recall from Definition \ref{extda} that $0$ may not belong to $\Lambda_d.$
	
	\begin{lem}\label{nonredBmax}
		Let $\Psi$ be a maximal root subsystem of $\Phi.$ Then we have that $\Psi\cap \Phi_B$ is a root subsystem of $\Phi_B.$ Moreover, $\Phi_B$ is a maximal root subsystem of $\Phi$ if and only if $\Lambda_d$ is equal to a coset in $\Lambda_d/2\Lambda_\ell.$ Otherwise, $\Phi_B$ is contained in a maximal root subsystem of the form $$\Psi':=\{\alpha\oplus Y_d:\alpha\in \mr{\Phi}_d\}\cup \Phi_B,$$ where $Y_d\subseteq \Lambda_d$ is a (fixed) maximal union of cosets in $\Lambda_d/2\Lambda_\ell$ for all $\alpha\in \mr{\Phi}_d.$
	\end{lem}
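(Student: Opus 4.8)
The plan is to pin down the entire set of root subsystems lying between $\Phi_B$ and $\Phi$; both assertions then fall out. Throughout I will use that $\Lambda_\ell$ is a subgroup of $Y$, that $\Lambda_\alpha=-\Lambda_\alpha$ (Proposition \ref{propsametyperoots}), that $0\in\Lambda_\ell$ (Definition \ref{extda}(2), since $\epsilon_i-\epsilon_j$ is non-divisible), and the inclusions among $\Lambda_s,\Lambda_\ell,\Lambda_d$ recorded at the start of this section. In particular, since $0\in\Lambda_\ell$, from $\Lambda_\ell+\Lambda_d\subseteq\Lambda_\ell$ and $\Lambda_\ell+2\Lambda_s\subseteq\Lambda_\ell$ one gets $\Lambda_d\subseteq\Lambda_\ell$ and $2\Lambda_s\subseteq\Lambda_\ell$, hence $2\Lambda_d\subseteq 2\Lambda_\ell$ and $4\Lambda_s\subseteq 2\Lambda_\ell$; I will also use $\Phi_B=\widehat{B^I}=\{\gamma\in\Phi:\mathrm{Gr}(\gamma)\in B^I\}$, which is a root subsystem of $\Phi$ by Definition \ref{extda}(1) (its gradient $B^I$ is a root subsystem of $\mr{\Phi}$ and all $\Lambda$'s are full). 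I tacitly assume $|I|\ge 2$, so that $\mr{\Phi}_\ell\ne\emptyset$ and $\Lambda_\ell$ is defined, as the statement presupposes.

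\emph{First statement.} For any root subsystem $\Psi$ (maximal or not) and $\alpha,\beta\in\Psi\cap\Phi_B$ one has $s_\alpha(\beta)\in\Psi$ and $\mathrm{Gr}(s_\alpha(\beta))=s_{\mathrm{Gr}(\alpha)}(\mathrm{Gr}(\beta))\in B^I$, because the Weyl group of $\mr{\Phi}$ permutes the short roots and permutes the long roots; hence $s_\alpha(\beta)\in\Phi_B$ and $\Psi\cap\Phi_B$ is a root subsystem of $\Phi_B$. This part is immediate. The substance is to describe $[\Phi_B,\Phi]$: for a union $Y_d$ of cosets of $2\Lambda_\ell$ contained in $\Lambda_d$ (the empty union allowed) set $\Gamma_{Y_d}:=\Phi_B\cup\{\alpha\oplus Y_d:\alpha\in\mr{\Phi}_d\}$, so $\Gamma_\emptyset=\Phi_B$ and $\Gamma_{\Lambda_d}=\Phi$. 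I will show $Y_d\mapsto\Gamma_{Y_d}$ is an inclusion-preserving bijection onto the set of root subsystems $\Gamma$ with $\Phi_B\subseteq\Gamma\subseteq\Phi$. Granting this, the interval $[\Phi_B,\Phi]$ in the lattice of root subsystems is isomorphic to the power set of the finite set $\Lambda_d/2\Lambda_\ell$ ordered by inclusion, with bottom $\Phi_B$ and top $\Phi$; hence $\Phi_B$ is maximal in $\Phi$ exactly when $|\Lambda_d/2\Lambda_\ell|=1$, i.e. $\Lambda_d$ is a single coset of $2\Lambda_\ell$, and otherwise the maximal root subsystems containing $\Phi_B$ are precisely the $\Gamma_{Y_d}$ with $Y_d$ a maximal proper union of cosets of $2\Lambda_\ell$ in $\Lambda_d$ — exactly the form of $\Psi'$ in the statement.

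To prove surjectivity I would take $\Phi_B\subsetneq\Gamma\subseteq\Phi$ a root subsystem: it contains some $2\epsilon_i\oplus y$, so reflecting in the root $(\epsilon_i-\epsilon_j)\oplus 0\in\Phi_B\subseteq\Gamma$ gives $2\epsilon_j\oplus y\in\Gamma$ for all $j$, while $Y_\alpha(\Gamma)=\Lambda_\alpha$ for short and long $\alpha$ since $\Phi_B\subseteq\Gamma$. Then Lemma \ref{lemm1} with $\beta=2\epsilon_i$, $\alpha=\epsilon_i-\epsilon_j$ (where $(2\epsilon_i,(\epsilon_i-\epsilon_j)^\vee)=2$ and $s_{\epsilon_i-\epsilon_j}(2\epsilon_i)=2\epsilon_j$) gives $Y_{2\epsilon_i}(\Gamma)+2\Lambda_\ell\subseteq Y_{2\epsilon_j}(\Gamma)$; interchanging $i,j$ and using that $2\Lambda_\ell$ is a subgroup forces $Y_{2\epsilon_i}(\Gamma)+2\Lambda_\ell=Y_{2\epsilon_i}(\Gamma)=Y_{2\epsilon_j}(\Gamma)$ for all $i,j$, so $\Gamma=\Gamma_{Y_d}$ with $Y_d:=Y_{2\epsilon_i}(\Gamma)$ a nonempty union of cosets of $2\Lambda_\ell$ in $\Lambda_d$. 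For well-definedness I would verify $\Gamma_{Y_d}$ is closed under all reflections: short/long in short/long is closedness of $\Phi_B$; a divisible root reflected in a short, long, or divisible root stays divisible with $Y$-component changed by an element of $4\Lambda_s$, $2\Lambda_\ell$, or $2\Lambda_d$ respectively, all inside $2\Lambda_\ell$ — hence still in $Y_d$, using also $-c\in c+2\Lambda_\ell$ for $c\in\Lambda_d$, so $-Y_d=Y_d$; and a short (resp. long) root reflected in a divisible root stays short (resp. long) with $Y$-component shifted by an element of $\Lambda_d$, landing in $\Lambda_s$ (resp. $\Lambda_\ell$) by $\Lambda_s+\Lambda_d\subseteq\Lambda_s$ (resp. $\Lambda_\ell+\Lambda_d\subseteq\Lambda_\ell$).

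\emph{Main obstacle.} The only genuinely laborious step is the closedness check for $\Gamma_{Y_d}$ just described; what makes it go through cleanly — and what I would isolate at the very beginning — is the inclusion $\Lambda_d\subseteq\Lambda_\ell$, which forces $2\Lambda_d,4\Lambda_s\subseteq 2\Lambda_\ell$ (so $2\Lambda_\ell$ is the correct modulus everywhere) and simultaneously makes every coset of $2\Lambda_\ell$ meeting $\Lambda_d$ invariant under negation, so that no separate symmetry hypothesis on $Y_d$ is needed. The rest — the bijection, and reading off when $\Phi_B$ is a coatom — is then purely order-theoretic bookkeeping on the power set of $\Lambda_d/2\Lambda_\ell$.
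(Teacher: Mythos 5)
Your proof is correct and follows essentially the same route as the paper: both parametrize the root subsystems lying between $\Phi_B$ and $\Phi$ by unions of cosets of $2\Lambda_\ell$ in $\Lambda_d$, with the closure checks resting on $\Lambda_d\subseteq\Lambda_\ell$ and hence $4\Lambda_s,\,2\Lambda_d\subseteq 2\Lambda_\ell$. You spell out the surjectivity direction (that every intermediate root subsystem has this form, via Lemma \ref{lemm1} applied to $2\epsilon_i$ and $\epsilon_i-\epsilon_j$), which the paper's proof leaves implicit after verifying the displayed inclusions.
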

	
	\begin{proof}
		Note that we have $\Psi\cap\Phi_B\neq \emptyset.$ The proof of the fact that $\Psi\cap\Phi_B$ is a root subsystem of $\Phi$ is easy and we skip the details.  Recall that $\Lambda_\ell$ is a group. Define $\Psi':=\Phi_B\cup\{\alpha\oplus Z_d':\alpha\in \mr{\Phi}_d\},$ where $Z_d'\subseteq \Lambda_d$ is any (fixed) union of cosets in $\Lambda_d/2\Lambda_\ell.$ For $\alpha\in\mr{\Phi}_s,\beta\in\mr{\Phi}_\ell,$ the following relations show that $\Psi'$ is a root subsystem of $\Phi$ which contains $\Phi_B$
		$$Z_d'\pm 4\Lambda_\alpha\subseteq Z_d'+2\Lambda_\ell\subseteq Z_d',\ \ Z_d'\pm2\Lambda_\beta=Z_d'+2\Lambda_\ell\subseteq Z_d',\ \ Z_d'+2Z_d'\subseteq Z_d'+2\Lambda_\ell\subseteq Z_d'.$$

		Now the proof follows.
	\end{proof}
	\begin{rem}
		Lemma $6.2$ in \cite{kus2021borel} is not true if we drop the `closeness' of the root subsystem $\Psi.$
	\end{rem}
	
	The proof of the following Lemma is similar and hence we skip the details.
	
	\begin{lem}\label{nonredCmax}
		$\Phi_C$ is a maximal root subsystem of $\Phi$ if and only if $\Lambda_s=\Lambda_\ell.$ If $\Lambda_s\neq \Lambda_\ell,$ then $\Phi_C$ is contained in a maximal root subsystem of the form $$\Psi':=\{\alpha\oplus Y_\alpha:\alpha\in \mr{\Phi}_s\}\cup \Phi_C,$$ where $Y_\alpha\subseteq \Lambda_s$ is a maximal union of cosets in $\Lambda_s/\Lambda_\ell$ for all $\alpha\in \mr{\Phi}_s.$
	\end{lem}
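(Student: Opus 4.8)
The plan is to mirror the argument of Lemma \ref{nonredBmax}, now working with the subsystem $\Phi_C=\widehat{A_\emptyset}$ and its complementary piece $\mr{\Phi}_s$. The inclusions from Section \ref{lambdainclusions} do all the work: from the triple $(\epsilon_i,\epsilon_j,\epsilon_i+\epsilon_j)$ of type $(s,s,\ell)$ one has $2\Lambda_s\subseteq\Lambda_s+\Lambda_s\subseteq\Lambda_\ell$; since $0\in\Lambda_\ell$ the relation $\Lambda_\ell+\Lambda_d\subseteq\Lambda_\ell$ gives $\Lambda_d\subseteq\Lambda_\ell$; and together with $\Lambda_s+\Lambda_\ell\subseteq\Lambda_s$, $\Lambda_d+4\Lambda_s\subseteq\Lambda_d$ these govern every reflection that appears below.

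First I would settle the case $\Lambda_s=\Lambda_\ell$ and show $\Phi_C$ is maximal. Let $\Psi$ be a root subsystem with $\Phi_C\subseteq\Psi\subseteq\Phi$. Since $A_\emptyset=C_I$ is a maximal root subsystem of $BC_I$ (Lemma \ref{nonredgrmaximal}), $\mathrm{Gr}(\Psi)$ is either $C_I$ — in which case $Y_\alpha(\Psi)=\Lambda_\alpha$ for $\alpha\in\mr{\Phi}_{\ell d}$ forces $\Psi=\Phi_C$ — or all of $BC_I$. In the latter case, choose the $\bz$-linear map $p$ of Section \ref{keysection} vanishing on $\mr{\Phi}_{\ell d}$ (possible because those $Y_\alpha$ are full); for short $\alpha$ the set $Y_\alpha'(\Psi)$ contains $0$ and satisfies $Y_\alpha'(\Psi)+\Lambda_\ell\subseteq Y_\alpha'(\Psi)$, hence $\Lambda_\ell\subseteq Y_\alpha'(\Psi)$. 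Since $Y_\alpha(\Psi)\subseteq\Lambda_s=\Lambda_\ell$ this forces $p_\alpha\in\Lambda_\ell$ and then $Y_\alpha(\Psi)=\Lambda_\ell=\Lambda_\alpha$, so $\Psi=\Phi$. Thus $\Phi_C$ is maximal.

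Next, assuming $\Lambda_s\neq\Lambda_\ell$, I would produce the advertised $\Psi'$. Fix $Y\subseteq\Lambda_s$ which is a union of cosets of $\Lambda_\ell$ with $0\in Y$ and $Y=-Y$, maximal subject to $Y\subsetneq\Lambda_s$; such a $Y$ exists because $\Lambda_s$ is a disjoint union of at least two $\Lambda_\ell$-cosets. Put $\Psi':=\{\alpha\oplus Y:\alpha\in\mr{\Phi}_s\}\cup\Phi_C$. Checking that $\Psi'$ is a root subsystem is the routine core: for every pair of roots one verifies $s_\gamma(\delta)\in\Psi'$, and in each case the displacement lands where it must by one of the inclusions above — e.g. $s_{\epsilon_i\oplus y}(\epsilon_i\oplus y')=-\epsilon_i\oplus(y'-2y)$ with $2y\in2\Lambda_s\subseteq\Lambda_\ell$, so $y'-2y\in Y$; $s_{\epsilon_i\oplus y}(\epsilon_i+\epsilon_j\oplus\lambda)=(-\epsilon_i+\epsilon_j)\oplus(\lambda-2y)$ with $\lambda-2y\in\Lambda_\ell$; $s_{\epsilon_i\oplus y}(2\epsilon_i\oplus\mu)=-2\epsilon_i\oplus(\mu-4y)$ with $4y\in4\Lambda_s$ and $\Lambda_d+4\Lambda_s\subseteq\Lambda_d$; and $s_{\epsilon_i\pm\epsilon_j\oplus\lambda}(\epsilon_i\oplus y)=\mp\epsilon_j\oplus(y-\lambda)$ with $y-\lambda\in Y$. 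For maximality, suppose $\Psi'\subsetneq\Psi''\subseteq\Phi$; a root of $\Psi''\smallsetminus\Psi'$ cannot be long or divisible (those parts of $\Psi'$ are already full), so $\mathrm{Gr}(\Psi'')=BC_I$ and, taking $p$ to vanish on short roots, $Y_{\epsilon_i}(\Psi'')$ is a union of $\Lambda_\ell$-cosets strictly containing $Y$; by maximality of $Y$ it equals $\Lambda_s$, and applying the relations coming from $s_{\epsilon_i-\epsilon_j}$ gives $Y_{\epsilon_j}(\Psi'')=\Lambda_s$ for all $j$, whence $\Psi''=\Phi$. So $\Psi'$ is a maximal root subsystem properly containing $\Phi_C$, proving $\Phi_C$ is not maximal and completing the equivalence.

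The one place that needs care — and the main obstacle — is the reflection $s_{2\epsilon_i\oplus\mu}$ acting on a short root, namely $s_{2\epsilon_i\oplus\mu}(\epsilon_i\oplus y)=-\epsilon_i\oplus(y-\mu)$ with $\mu\in\Lambda_d$: a priori this could push $Y$ out of itself. This is exactly where the observation $\Lambda_d\subseteq\Lambda_\ell$ (from $\Lambda_\ell+\Lambda_d\subseteq\Lambda_\ell$ and $0\in\Lambda_\ell$) is needed, since then $y-\mu\in Y+\Lambda_\ell=Y$. Everything else is bookkeeping parallel to the proof of Lemma \ref{nonredBmax}, which is why the details can be safely omitted.
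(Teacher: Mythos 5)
Your proof is correct and follows essentially the same route as the paper, which itself only sketches this lemma as ``similar to Lemma~\ref{nonredBmax}'': you construct $\Psi'$ by adjoining to $\Phi_C$ a union of $\Lambda_\ell$-cosets over the short roots and verify the reflection relations using $2\Lambda_s\subseteq\Lambda_\ell$ and $\Lambda_d\subseteq\Lambda_\ell$. One small correction: your intermediate claim $\Lambda_s+\Lambda_s\subseteq\Lambda_\ell$ is false in general (e.g.\ for $BC_n^{(2,4)}$ one has $\Lambda_s=\bz\times\bz$ but $\Lambda_\ell=\bz\times 2\bz$), since the extension-datum axioms give only reflection-type inclusions, not closedness; the fact you actually use, $2\Lambda_s\subseteq\Lambda_\ell$, should instead be derived directly from $\Lambda_\ell+2\Lambda_s\subseteq\Lambda_\ell$ together with $0\in\Lambda_\ell$.
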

	
	Now assume that $\Psi$ is a maximal root subsystem of $\Phi$ such that $\mathrm{Gr}(\Psi)=\mr{\Phi}$. Set $\Psi_B:=\Psi\cap\Phi_B.$ Then either $\Psi_B=\Phi_B$ or $\Psi_B=\Psi_B^{\mathrm{m}}$ for some maximal root subsystem of $\Phi_B$. If $\Psi_B\neq \Phi_B,$ the next Proposition determines $\Psi$ depending on the form of $\Psi_B^{\mathrm{m}}.$

	\begin{prop}\label{nonredMaxBproper}
		Let $\Psi$ be a maximal root subsystem of $\Phi$ such that $\mathrm{Gr}(\Psi)=\mr{\Phi}$ and $\Psi_B\neq \Phi_B.$ Assume that $\Psi_B^{\mathrm{m}}$ is the maximal root subsystem of $\Phi_B$ defined as above. If $\Psi_B^{\mathrm{m}}$ is defined by
		\begin{enumerate}
			\item Lemma \ref{lemredoftypeB}(\ref{lemlongfullb}),  then $\Psi=\Psi_B^{\mathrm{m}}\cup \{\alpha\oplus \Lambda_d:\alpha\in\mr{\Phi}_d\},$
			
			\item Lemma \ref{lemredoftypeB}(\ref{lemdetmb}), then $\Psi=\Psi_B^{\mathrm{m}}\cup \{\alpha\oplus (p_\alpha+H)\cap\Lambda_d:\alpha\in\mr{\Phi}_d\}$ for some $\bz$-linear function $p:\mr{\Phi}_{\ell d}\to \Lambda_\ell,\  p_\alpha\in Y_\alpha(\Psi),$ 
			
			\item Lemma \ref{lemredoftypeB}(\ref{lemdetnotmb}),
			\begin{enumerate}[leftmargin=*]
				\item and $\Lambda_d \subseteq H,$ then $\Psi=\Psi_B^{\mathrm{m}}\cup \{\alpha\oplus \Lambda_d:\alpha\in\mr{\Phi}_d\},$
				\item and $\Lambda_d \not\subseteq H,$ then there exists a $\bz$-linear function $p':\mr{\Phi}_{\ell d}\to \Lambda_\ell$ satisfying $p'_\beta=p_\beta$ for $\beta\in\{\epsilon_i-\epsilon_{i+1}:1\le i<n\}$ so that  
				$$\Psi=\Psi_B^{\mathrm{m}}\cup \{\alpha\oplus (p_\alpha'+Y_d'):\alpha\in\mr{\Phi}_d\},$$
				
				where $Y_d'=\cup_{b\in B} (b+2H)$ is a maximal union of cosets in $H/2H$ satisfying $a_n+A\subseteq A,\ \ 2a_n+B\subseteq B,$ if $S=\cup_{a\in A}(a+H)$ and $ a_n=2p_{\epsilon_n}-p'_{2\epsilon_n}.$
			\end{enumerate}
		\end{enumerate}
	\end{prop}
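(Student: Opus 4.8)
The plan is to reduce the statement to a description of the divisible part of $\Psi$. Because $\mathrm{Gr}(\Psi)=\mr{\Phi}$ and $\Psi_B=\Psi\cap\Phi_B\neq\Phi_B$, Lemma \ref{nonredBmax} and the paragraph preceding the Proposition give
\[
\Psi=\Psi_B^{\mathrm m}\cup\{\alpha\oplus Y_\alpha(\Psi):\alpha\in\mr{\Phi}_d\},\qquad Y_\alpha(\Psi)\subseteq\Lambda_d,
\]
where $\Psi_B^{\mathrm m}$ is one of the three maximal root subsystems of $\Phi_B$ classified in Lemma \ref{lemredoftypeB} (its gradient $B^I$ being of type $B$). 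The first step is to record the constraints linking the divisible and non-divisible parts. Applying Lemma \ref{lemm1} to the reflections $s_{\epsilon_i\pm\epsilon_j}(2\epsilon_i)=\mp 2\epsilon_j$, $s_{2\epsilon_i}(\epsilon_i\pm\epsilon_j)=-\epsilon_i\pm\epsilon_j$, $s_{2\epsilon_i}(\epsilon_i)=-\epsilon_i$ and $s_{\epsilon_i}(2\epsilon_i)=-2\epsilon_i$ yields, for $i\neq j$,
\[
Y_{2\epsilon_i}-2Y_{\epsilon_i\pm\epsilon_j}\subseteq Y_{\mp 2\epsilon_j},\qquad Y_{\epsilon_i+\epsilon_j}-Y_{2\epsilon_i}\subseteq Y_{\epsilon_j-\epsilon_i},\qquad Y_{2\epsilon_i}-4Y_{\epsilon_i}\subseteq Y_{-2\epsilon_i}
\]
(and the images of these under $\alpha\mapsto-\alpha$). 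The middle inclusion already forces $Y_{2\epsilon_i}(\Psi)\subseteq\bigl(2p_{\epsilon_i}+Y_\ell'(\Psi_B^{\mathrm m})\bigr)\cap\Lambda_d$, which is $\Lambda_d$ in case (1) and a translate of $H\cap\Lambda_d$ in cases (2) and (3).

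For cases (1), (2) and (3a) I would use a fixed template. In (2) first fix a $\bz$-linear $p:\mr{\Phi}_{\ell d}\to\Lambda_\ell$ with $p_\alpha\in Y_\alpha(\Psi)$ (possible by Section \ref{sectiondefinitionp}, since $\mr{\Phi}_{\ell d}$ is of type $C_I$ and $2\epsilon_i\in\mathrm{Gr}(\Psi)$); in (1) and (3a) take the $d$-part to be all of $\Lambda_d$. Set $\Psi':=\Psi_B^{\mathrm m}\cup\{\alpha\oplus Z_\alpha:\alpha\in\mr{\Phi}_d\}$ with $Z_\alpha$ the prescribed $d$-part. Using the inclusions of Section \ref{lambdainclusions} ($\Lambda_d+2\Lambda_\ell\subseteq\Lambda_d$, $\Lambda_d+4\Lambda_s\subseteq\Lambda_d$, $\Lambda_\ell+\Lambda_d\subseteq\Lambda_\ell$, $\Lambda_s+\Lambda_\ell\subseteq\Lambda_s$), together with $2\Lambda_s\subseteq\Lambda_\ell$ and, in case (3), $2S\subseteq H$, one checks that $\Psi'$ is a root subsystem of $\Phi$; the only non-formal point is $p_{2\epsilon_i}\in H$, which holds in (2) because $2\Lambda_s\subseteq H$ there, and in (3a) because $\emptyset\neq Y_{2\epsilon_i}(\Psi)\subseteq(p_{2\epsilon_i}+H)\cap\Lambda_d$ together with $\Lambda_d\subseteq H$. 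Since $\Psi_B^{\mathrm m}\subsetneq\Phi_B$ we get $\Psi'\subsetneq\Phi$, and $\Psi\subseteq\Psi'$ by the inclusions above, so maximality of $\Psi$ forces $\Psi=\Psi'$, which is exactly the assertion in these cases.

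The main obstacle is case (3b), where $\Lambda_d\not\subseteq H$ and $2p_{\epsilon_i}$ need not lie in $H$, so it is the wrong base point and $p$ must be replaced. Since $\{\epsilon_i-\epsilon_{i+1}:1\le i<n\}\cup\{2\epsilon_n\}$ is a simple system of $\mr{\Phi}_{\ell d}\cong C_I$, I would set $p'_{\epsilon_i-\epsilon_{i+1}}:=p_{\epsilon_i-\epsilon_{i+1}}$, pick $p'_{2\epsilon_n}\in Y_{2\epsilon_n}(\Psi)$ (possible as $2\epsilon_n\in\mathrm{Gr}(\Psi)$) and extend $\bz$-linearly; Equation \eqref{keyequationp} then gives $p'_\alpha\in Y_\alpha(\Psi)$ for all $\alpha\in\mr{\Phi}_{\ell d}$. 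Put $a_n:=2p_{\epsilon_n}-p'_{2\epsilon_n}$; from $Y_{2\epsilon_n}(\Psi)\subseteq(2p_{\epsilon_n}+H)\cap\Lambda_d$ one gets $a_n\in H$, hence $Y_d':=Y_{2\epsilon_n}(\Psi)-p'_{2\epsilon_n}\subseteq H$, and combining two instances of the first inclusion above (indices $n,j$ swapped) with $Y_{\epsilon_n-\epsilon_j}(\Psi)=p_{\epsilon_n-\epsilon_j}+H$ and $-2H=2H$ gives $Y_d'+2H\subseteq Y_d'$, so $Y_d'$ is a union of cosets of $2H$. Writing out $s_{2\epsilon_n}(\epsilon_n)=-\epsilon_n$ and $s_{\epsilon_n}(2\epsilon_n)=-2\epsilon_n$ with the coset data, evaluating at $0\in Y_d'$ (resp. absorbing the term $4S\subseteq 2H$ into $Y_d'$), yields $a_n+S\subseteq S$ and $2a_n+Y_d'\subseteq Y_d'$. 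Finally one takes $\Psi':=\Psi_B^{\mathrm m}\cup\{\alpha\oplus(p'_\alpha+Z):\alpha\in\mr{\Phi}_d\}$ with $Z$ a maximal union of cosets of $2H$ in $H$ satisfying $2a_n+Z\subseteq Z$, verifies $\Psi'$ is a root subsystem with $\Psi\subseteq\Psi'\subsetneq\Phi$, and concludes $Y_d'=Z$ by maximality. I expect the delicate part to be exactly the coset bookkeeping that pins $Y_d'$ inside $H$, shows its $2H$-invariance and extracts the identities $a_n+S\subseteq S$ and $2a_n+Y_d'\subseteq Y_d'$; once these are in place the maximality step is formal, using the incomparability of distinct unions of cosets of fixed index as in Lemma \ref{keylemmalattice}.
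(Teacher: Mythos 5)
Your proposal is correct and follows essentially the same route as the paper: the paper likewise treats (1), (2), (3a) as easy, and for (3b) defines $p'$ on the simple system $\{\epsilon_i-\epsilon_{i+1}\}\cup\{2\epsilon_n\}$ of $\mr{\Phi}_{\ell d}$, observes that $Y_d'$ is a union of cosets in $H/2H$, and reads off the conditions $a_n+A\subseteq A$, $2a_n+B\subseteq B$ from the reflection inclusions before invoking maximality. The only (inessential) imprecision is your claim in case (2) that $p_{2\epsilon_i}\in H$: since $p_{2\epsilon_n}$ is chosen in $Y_{2\epsilon_n}(\Psi)\subseteq\Lambda_d$ this need not hold, but the verification only requires $2p_{2\epsilon_i}\in 2\Lambda_\ell\subseteq 2\Lambda_s\subseteq H$, so the conclusion is unaffected.
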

	\begin{rem}
		Note that the maximal root subsystems $(1)$ and $(2)$ defined above are of the form of Theorem $4.2$ and Theorem $4.3$ respectively in \cite{kus2021borel}. Also 
		note that $\Psi'$ defined in Lemma \ref{nonredCmax} is of the form Proposition \ref{nonredMaxBproper}(1).
	\end{rem}
	
	\begin{proof}
		We shall only prove $(3)(b)$ since the other cases are easy to prove. Let $\Psi$ be a maximal root subsystem such that $\Psi_B^{\mathrm{m}}$ is defined by Lemma \ref{lemredoftypeB}(\ref{lemdetnotmb}) and $\Lambda_d \not\subseteq H.$ Define a $\bz$-linear function $p':\mr{\Phi}_{\ell d}\to \Lambda_\ell$ by extending $p'_{\epsilon_i-\epsilon_{i+1}}=p_{\epsilon_i-\epsilon_{i+1}}$ for $1\le i<n$ and $p'_{2\epsilon_n}\in Y_{2\epsilon_n}$ arbitrary. Note that we have $2p_{\epsilon_i}-p_{2\epsilon_i}'=2p_{\epsilon_n}-p_{2\epsilon_n}'$ for all $i=1,2,\dots,n$. Now as in Proposition \ref{proprednotoftypeB}, $Y_d':=Y_\alpha-p_\alpha',\ \alpha\in\mr{\Phi}_d$ is a union of cosets in $H/2H$ so that if $Y_d'=\cup_{b\in B}(b+2H),$ then $0\in B$ and $B\subseteq H.$  Now the set $$\Psi':=\bigcup_{\substack{i, j=1\\ i\neq j}}^n\{\pm\epsilon_i\oplus (p_{\pm\epsilon_i}+S)\}\cup\{\pm\epsilon_i\pm\epsilon_j\oplus (p_{\pm\epsilon_i\pm\epsilon_j}+H)\}\cup\{\pm2\epsilon_i\oplus(p'_{\pm 2\epsilon_i}+Y_d')\}$$
		is a root subsystem if and only if for $\lambda,\mu\in\{\pm 1\}$ and $i=1,2,\dots,n$ the following hold:
		$$(p_{_{\lambda\epsilon_i}}+S)-\lambda\mu(p'_{2\mu\epsilon_i}+Y_d')\subseteq (p_{_{-\lambda\epsilon_i}}+S),\ \ (p'_{2\mu\epsilon_i}+Y_d')-4\lambda\mu(p_{_{\lambda\epsilon_i}}+S)\subseteq (p'_{-2\mu\epsilon_i}+Y_d'),$$ which is true if and only if $a_n+A\subseteq A,\ \ 2a_n+B\subseteq B.$ Therefore $\Psi$ is of the given form. This completes the proof.
	\end{proof}

	Let $\Psi$ be a maximal root subsystem of $\Phi.$ If $\mathrm{Gr}(\Psi)\neq \mathring{\Phi},$ then $\mathrm{Gr}(\Psi)$ is a maximal root subsystem of $\mr{\Phi}$ and thus of the form Lemma \ref{nonredgrmaximal}. Remark \ref{AJhatmax}, Lemma \ref{nonredBmax} and Lemma \ref{nonredCmax} describe precisely when $\widehat{\mathrm{Gr}(\Psi)}$ is a maximal root subsystem of $\Phi.$ If $\mathrm{Gr}(\Psi)=\mr{\Phi},$ then it is easy to see that $\Psi_B:=\Psi\cap\Phi_B$ is either $\Phi_B$ or maximal root subsystem of $\Phi_B.$ These cases have been discussed in Lemma \ref{nonredBmax}, Lemma \ref{nonredCmax} and Proposition \ref{nonredMaxBproper}. Summarizing, we have the classification Theorem of maximal root subsystems of a non-reduced irreducible affine reflection system. 
	\begin{thm}\label{thmnonred}
		Let $\Phi$ be the affine reflection system so that $\mr{\Phi}$ is a non-reduced irreducible finite root system. Let $\Psi$ be a root subsystem of $\Phi.$ Then $\Psi$ is a maximal root subsystem of $\Phi$ if and only if one of the following holds:
		\begin{enumerate}
			\item $\Psi=\widehat{A_J}$ for some non-empty proper subset $J$ of $I.$
			\item $\Lambda_\ell= \Lambda_s$ and $\Psi=\Phi_C=\widehat{A_\emptyset}.$
			\item $\Lambda_\ell\neq \Lambda_s$ and $\Psi=\Psi'$ as defined in Lemma \ref{nonredCmax}.
			\item $\Lambda_d$ is equal to a single coset in $\Lambda_d/2\Lambda_\ell$ and $\Psi=\widehat{B^I}.$
			\item $\Lambda_d$ is a union of more than one cosets in $\Lambda_d/2\Lambda_\ell$ and $\Psi=\Psi'$ as defined in Lemma \ref{nonredBmax}.
			\item $\Psi$ is given by Proposition \ref{nonredMaxBproper}.
		\end{enumerate}
	\end{thm}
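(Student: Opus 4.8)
The plan is to prove the equivalence by splitting the harder (``only if'') direction according to whether the gradient $\mathrm{Gr}(\Psi)$ is all of $\mr{\Phi}$ or a proper root subsystem, and to handle the ``if'' direction by a direct appeal to the results already assembled in this section. For sufficiency I would observe that each of the six families has already been shown to consist of maximal root subsystems: (1) is Remark~\ref{AJhatmax}; (2) and (3) are the two alternatives of Lemma~\ref{nonredCmax}; (4) and (5) are the two alternatives of Lemma~\ref{nonredBmax}; and (6) is precisely the content of Proposition~\ref{nonredMaxBproper}, whose assertions exhibit the listed subsets as maximal root subsystems of $\Phi$. So the ``if'' direction is immediate from the cited statements.

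For necessity when $\mathrm{Gr}(\Psi)\subsetneq\mr{\Phi}$: by Lemma~\ref{lemm1} the set $\mr{\Psi}:=\mathrm{Gr}(\Psi)$ is a root subsystem of $\mr{\Phi}\cup\{0\}$, and $\widehat{\mr{\Psi}}$ is a root subsystem of $\Phi$ with $\Psi\subseteq\widehat{\mr{\Psi}}\subsetneq\Phi$; maximality of $\Psi$ forces $\Psi=\widehat{\mr{\Psi}}$, and also forces $\mr{\Psi}$ to be a maximal root subsystem of $\mr{\Phi}$, since otherwise the lift of a root subsystem strictly between $\mr{\Psi}$ and $\mr{\Phi}$ would sit strictly between $\Psi$ and $\Phi$. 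By Lemma~\ref{nonredgrmaximal}, $\mr{\Psi}$ is $B^I$ or $A_J$ for some $J\subsetneq I$. If $\mr{\Psi}=A_J$ with $\emptyset\neq J\subsetneq I$ we land in case (1). If $\mr{\Psi}=A_\emptyset=C_I$ then $\Psi=\Phi_C$, and maximality together with Lemma~\ref{nonredCmax} forces $\Lambda_s=\Lambda_\ell$, which is case (2). If $\mr{\Psi}=B^I$ then $\Psi=\Phi_B$, and Lemma~\ref{nonredBmax} forces $\Lambda_d$ to be equal to a single coset in $\Lambda_d/2\Lambda_\ell$, which is case (4).

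For necessity when $\mathrm{Gr}(\Psi)=\mr{\Phi}$: set $\Psi_B:=\Psi\cap\Phi_B$. By the first part of Lemma~\ref{nonredBmax} this is a root subsystem of $\Phi_B$, and its gradient is $\mr{\Phi}_s\cup\mr{\Phi}_\ell=B^I$, hence full. The key point, which I would justify by a short direct verification using the extension-datum relations for $BC_I$ in the spirit of the preceding propositions, is that $\Psi_B$ is either $\Phi_B$ or a maximal root subsystem of $\Phi_B$. Granting this: if $\Psi_B=\Phi_B$, then reflecting the divisible roots by the long roots $\epsilon_i-\epsilon_j\oplus 0$ shows all $Y_{\pm 2\epsilon_i}(\Psi)$ coincide with a common set $Y_d$, and the relation produced by $s_{\epsilon_i-\epsilon_j}$ shows $Y_d$ is a union of cosets of $2\Lambda_\ell$ inside $\Lambda_d$; since $\Phi_B\cup\{\pm2\epsilon_i\oplus Z\}$ is a root subsystem for every such union $Z$ (Lemma~\ref{nonredBmax}), maximality of $\Psi$ forces $Y_d$ to be a maximal proper such union, so $\Lambda_d$ is a union of more than one coset and $\Psi$ is exactly the $\Psi'$ of Lemma~\ref{nonredBmax}, i.e.\ case (5) (the value $Y_d=\Lambda_d$ would give $\Psi=\Phi$, which is excluded). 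If instead $\Psi_B$ is maximal in $\Phi_B$, then since its gradient is full, Theorem~\ref{thmredoftypeB} — equivalently Proposition~\ref{propredoftypeB}(\ref{subrootb}) together with Lemma~\ref{lemredoftypeB} — pins $\Psi_B$ down to one of the three explicit shapes, and Proposition~\ref{nonredMaxBproper}, applied with $\Psi_B^{\mathrm{m}}=\Psi_B$, then determines the divisible part of $\Psi$; this is case (6). (Case (3) and the subsystem $\Psi'$ of Lemma~\ref{nonredCmax} reappear here as instances of Proposition~\ref{nonredMaxBproper}(1), consistent with the Remark following that Proposition, so the listed families overlap but remain exhaustive.)

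The main obstacle is the single nonroutine step ``$\Psi_B$ is $\Phi_B$ or maximal in $\Phi_B$''. The delicacy is that enlarging the type-$B$ part of $\Psi$ in general tightens the compatibility constraints imposed on the divisible roots (through $\Lambda_d+4\Lambda_s\subseteq\Lambda_d$, $\Lambda_d+2\Lambda_\ell\subseteq\Lambda_d$, $\Lambda_s+\Lambda_d\subseteq\Lambda_s$, $\Lambda_\ell+\Lambda_d\subseteq\Lambda_\ell$), so one cannot naively adjoin all of $\Lambda_d$; the verification must use the interaction between the $B_I$-part and the divisible roots as organised in Proposition~\ref{nonredMaxBproper} to rule out any root subsystem strictly between $\Psi$ and $\Phi$. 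Everything else in the argument is bookkeeping over the cited lemmas and propositions.
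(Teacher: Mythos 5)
Your proposal is correct and follows essentially the same route as the paper: when the gradient is proper it reduces to Lemma \ref{nonredgrmaximal} together with Lemmas \ref{nonredBmax} and \ref{nonredCmax}, and when the gradient is full it passes to $\Psi_B=\Psi\cap\Phi_B$ (either $\Phi_B$ or maximal in $\Phi_B$) and invokes Proposition \ref{nonredMaxBproper}. The step you flag as the main obstacle --- that $\Psi_B$ is $\Phi_B$ or maximal in $\Phi_B$ --- is likewise asserted without detailed proof in the paper, so your treatment is on par with, and in places slightly more explicit than, the original.
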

	\begin{proof}
		Let $\Psi$ be a maximal root subsystem of $\Phi.$ Set $I_\Psi:=\{i\in I: \epsilon_i\in \mathrm{Gr}(\Psi)\}.$ If $\emptyset\neq I_\Psi\subsetneq I,$ then $\mathrm{Gr}(\Psi)=A_{I_\Psi}$ by Lemma \ref{nonredgrmaximal} and hence $\Psi=\widehat{A_{I_\Psi}}.$ Now assume that $I_\Psi=I.$ If $\mathrm{Gr}(\Psi)\subsetneq\mr{\Phi},$ then $\mathrm{Gr}(\Psi)=B^I$ by Lemma \ref{nonredgrmaximal} and thus $\Psi=\widehat{B^I}$ and $\Lambda_d= 2\Lambda_\ell$ by Lemma \ref{nonredBmax}. If $\mathrm{Gr}(\Psi)=\mr{\Phi},$ then set $\Psi_B:=\Psi\cap\Phi_B.$ If $\Psi_B=\Phi_B,$ then $\Lambda_d\neq 2\Lambda_\ell$ and $\Psi=\Psi'$ as  defined in Lemma \ref{nonredBmax}. Otherwise, $\Psi_B\subsetneq\Phi_B$ and thus $\Psi$ is given by Proposition \ref{nonredMaxBproper}. Lastly, we assume that $I_\Psi=\emptyset,$ then we must have $\Lambda_\ell=\Lambda_s$ and  $\Psi=\Phi_C$ by Lemma \ref{nonredCmax}.
		
		Conversely, each root subsystem defined above is a maximal root subsystem of $\Phi$ by Remark \ref{AJhatmax}, Lemma \ref{nonredBmax}, Lemma \ref{nonredCmax} and Proposition \ref{nonredMaxBproper}.
	\end{proof}
	\begin{rem}\label{nonredmaxclsd}
		If $\Psi$ defined above is a closed root subsystem, then $\Psi$ is of the form \cite[Theorem 4]{kus2021borel}. Therefore, we colclude that $\Psi$ is closed if and only if $\Psi$ is given by $(1),(2),(3)$ of the above Theorem and Proposition \ref{nonredMaxBproper}(1).
	\end{rem}

	\section{Applications}\label{sectionnullityle2}
	
	\subsection{Maximal root subsystems for nullity \texorpdfstring{$1$}{1}} It is well known that an affine reflection system is of nullity $1$ if and only if it is an affine root system (\cite{neher2011extended}). In (\cite{dyer2011reflection,dyer2011root}), the reflection subgroups of an untwisted affine root system have been classified. The maximal closed root subsystems of an affine root system were classified in \cite{roy2019maximal}. The characterization of maximal root subsystems for twisted affine root systems was open and we complete that here using our main results. At first we state the result for untwisted case in our language in the next Proposition. The proof follows easily from Theorem \ref{thmrednotoftypeB} and Theorem \ref{thmredoftypeB} by noting the fact that we have that $\Lambda_\ell=\Lambda_s$ and $m\Lambda_s$ is a maximal subgroup of $\Lambda_s.$
	\begin{prop}\label{propaffuntwisted}
		Let $\Psi$ be an untwisted affine root system. A subset $\Psi$ of $\Phi$ is a maximal root subsystem of $\Phi$ if and only if one of the following holds:
		\begin{enumerate}[leftmargin=*]
			\item $\mathrm{Gr}(\Psi)$ is a maximal root subsystem of $\mr{\Phi}$ such that $\mathrm{Gr}(\Psi)\neq \mr{\Phi}_s$ and $\Psi=\widehat{\mathrm{Gr}(\Psi)}.$ 
			\item\label{shortfullaffineuntwisted} there exists a $\bz$-linear function $p:\mr{\Phi}\to \Lambda_s,\ \ p_\alpha\in Y_\alpha,\ \ \forall \alpha\in \mr{\Phi}$
			\begin{enumerate}
				\item such that $\Psi$ is of the form $$\Psi=\{\alpha\oplus \Lambda_s:\alpha\in\mr{\Phi}_s\}\cup \{\alpha\oplus (p_\alpha+m\Lambda_s):\alpha\in\mr{\Phi}_\ell\},$$
				\item and a prime number $q\neq m$ such that $\Psi$ is of the form $$\Psi=\{\alpha\oplus (p_\alpha+q\bz):\alpha\in\mr{\Phi}\}.$$
			\end{enumerate}
		\end{enumerate}
	\end{prop}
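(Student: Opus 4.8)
The plan is to specialize the two classification theorems, Theorem~\ref{thmrednotoftypeB} and Theorem~\ref{thmredoftypeB}, to the nullity-one situation. An affine reflection system of nullity $1$ is an affine root system, and since we are in the untwisted case the underlying finite root system $\mr\Phi$ is reduced (never of type $BC_I$) and, after rescaling $\delta$, the extension datum is $\Lambda_s=\Lambda_\ell=\bz\subseteq\br=Y$ with no $\Lambda_d$. Thus three facts are available throughout: $\langle\Lambda_\ell\rangle=\Lambda_s$, the equality $\Lambda_s=\Lambda_\ell$ holds, and because the lacing number $m=m_{\mr\Phi}\in\{1,2,3\}$ is either $1$ or a prime, $m\Lambda_s=m\bz$ is a maximal subgroup of $\Lambda_s$ exactly when $m>1$. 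By Lemma~\ref{lemassumeirreducible} we may assume $\Phi$ is irreducible, and we split according to whether $\mr\Phi$ is of type $B_I$ or not.

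Suppose first that $\mr\Phi$ is not of type $B_I$. By Theorem~\ref{thmrednotoftypeB}, a maximal root subsystem $\Psi$ either has one of the three forms of Lemma~\ref{lemrednotoftypeB}, or is the lift $\widehat{\mathrm{Gr}(\Psi)}$ of a maximal root subsystem of $\mr\Phi$ satisfying the constraints (2)(a)--(c) of that theorem. For the second alternative, the case $\mathrm{Gr}(\Psi)=\mr\Phi_s$ in (2)(b) requires $\Lambda_\ell=m\Lambda_s$, forcing $m=1$ and hence $\mr\Phi_s=\mr\Phi$, which is not a proper subsystem; so this case is empty and $\widehat{\mr\Phi_s}$ is never maximal --- which is exactly the exclusion $\mathrm{Gr}(\Psi)\neq\mr\Phi_s$ in the statement --- while cases (2)(a),(c) hold with no further constraint since $\langle\Lambda_\ell\rangle=\Lambda_s$, yielding form (1) of the Proposition. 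For the Lemma~\ref{lemrednotoftypeB} forms: part (2) needs $\langle\Lambda_\ell\rangle\neq\Lambda_s$ and is vacuous; part (1) needs $\Lambda_\ell\neq m\Lambda_s$, i.e.\ $m>1$, and forces $S$ to be a maximal proper subgroup of $\bz$ --- respectively a maximal proper union of cosets of $m\bz$ in $\bz$ when $\mr\Phi$ is of type $C_I$ --- containing $m\bz$, so $S=m\bz=m\Lambda_s$ in either subcase, giving form (2)(a); part (3) forces $H$ to be a maximal subgroup $q\bz$ of $\bz$ with $m\bz+q\bz=\bz$, i.e.\ a prime $q\neq m$ (automatic if $m=1$), and here $(p_\alpha+H)\cap\Lambda_\ell=p_\alpha+H$, giving form (2)(b).

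Now suppose $\mr\Phi$ is of type $B_I$, so $m=2$. By Theorem~\ref{thmredoftypeB}, a maximal root subsystem either has one of the three forms of Lemma~\ref{lemredoftypeB} with full gradient, or is the lift $\widehat{\mathrm{Gr}(\Psi)}$ of a maximal root subsystem of $B_I$; since $\Lambda_\ell=\Lambda_s$ the condition of case 2(a) holds, and since $\mr\Phi_s$ is never a maximal root subsystem of $B_I$ (Lemma~\ref{redmaxrootsubsystems}) the two possibilities merge into form (1). Among the Lemma~\ref{lemredoftypeB} forms: part (1) needs $\Lambda_s\neq\Lambda_\ell$ and is vacuous; part (2) forces the maximal subgroup $H$ of $\Lambda_\ell=\bz$ containing $2\bz$ to equal $2\bz=m\Lambda_s$, giving form (2)(a); part (3) forces $H=q\bz$ for an odd prime $q$ (so $q\neq m$), and since $\Lambda_s=\bz$ is a group the auxiliary set $S$ is a subgroup of $\bz$ with $q\bz\subseteq S$ and $2S\subseteq q\bz$, which forces $S=q\bz=H$ --- in particular the non-subgroup possibilities for $S$ permitted by Lemma~\ref{lemredoftypeB}(3) (see Example~\ref{funnyex}) cannot occur once $\dim Y=1$ --- so $\Psi=\{\alpha\oplus(p_\alpha+q\bz):\alpha\in\mr\Phi\}$, which is form (2)(b).

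Conversely, each subset listed in (1) and (2) is indeed a maximal root subsystem: form (1) by the corollary following Theorem~\ref{thmrednotoftypeB} together with the maximality assertions in Theorem~\ref{thmrednotoftypeB}(2) and Theorem~\ref{thmredoftypeB}(2), and forms (2)(a),(b) directly from Lemma~\ref{lemrednotoftypeB} and Lemma~\ref{lemredoftypeB}. I do not expect a genuine obstacle here; the whole argument is bookkeeping, and the only point requiring a little care is the elementary arithmetic of subgroups and unions of cosets of $\bz$ used to collapse the several sub-cases of the two classification theorems onto the two clean families above.
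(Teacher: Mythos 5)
Your proposal is correct and follows essentially the same route as the paper: the paper's proof is the one-line observation that the result "follows from Theorem~\ref{thmrednotoftypeB} and Theorem~\ref{thmredoftypeB} by noting that $\Lambda_\ell=\Lambda_s$ and $m\Lambda_s$ is a maximal subgroup of $\Lambda_s$," and your case-by-case specialization (collapsing the sub-cases of Lemma~\ref{lemrednotoftypeB} and Lemma~\ref{lemredoftypeB} to $S=m\bz$ and $H=q\bz$ with $\gcd(q,m)=1$) is exactly the bookkeeping the paper leaves to the reader.
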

	
	Now assume that $\Phi$ is a twisted affine reflection system of nullity $1$ i.e. a twisted affine root system. If $\mr{\Phi}$ is reduced, then we have $\Lambda_s=\bz$ and $\Lambda_\ell=m\bz.$ The following Proposition characterizes the maximal root subsystems of $\Phi$ where $\mr{\Phi}$ is reduced.
	\begin{prop}
		Let $\Phi$ be an affine root system such that $\mr{\Phi}$ is reduced. A root subsystem $\Psi$ of $\Phi$ is a maximal root subsystem if and only one of the following holds:
		\begin{enumerate}[leftmargin=*]
			\item $\mathrm{Gr}(\Psi)$ is a maximal root subsystem of $\mr{\Phi}$ such that $\mathrm{Gr}(\Psi)\neq \mr{\Phi}_\ell$ and $\Psi=\widehat{\mathrm{Gr}(\Psi)}.$
			\item $\mathrm{Gr}(\Psi)=\mr{\Phi}$ and if there exists a $\bz$-linear function $p:\mathrm{Gr}(\Psi)\to\bz,\  p_\alpha\in Y_\alpha(\Psi),\  \forall \alpha\in \mathrm{Gr}(\Psi)$ and of the following holds
			\begin{enumerate}
				\item $\Psi$ is of the form $$\Psi=\{\alpha+(p_\alpha+m\bz):\alpha\in\mr{\Phi}\},$$
				\item there exists a prime $q$ so that $\gcd(m,q)=1$ and $\Psi$ is of the form $$\Psi=\{\alpha+(p_\alpha+q\bz):\alpha\in\mr{\Phi}_s\}\cup\{\alpha+(p_\alpha+mq\bz):\alpha\in\mr{\Phi}_\ell\},$$
			\end{enumerate}
			
		\end{enumerate}
	\end{prop}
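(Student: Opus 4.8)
The plan is to feed the data of a twisted affine root system with reduced gradient into the classification Theorems \ref{thmrednotoftypeB} and \ref{thmredoftypeB}. Such a $\Phi$ is irreducible (otherwise apply Lemma \ref{lemassumeirreducible}), so $\mr{\Phi}$ is one of $B_I,C_I,F_4,G_2$, the lacing number $m=m_{\mr{\Phi}}\in\{2,3\}$ is prime, and the underlying extension datum is $\Lambda_s=\bz$, $\Lambda_\ell=m\bz=m\Lambda_s$; in particular $\langle\Lambda_\ell\rangle=m\bz\subsetneq\bz=\Lambda_s$, and when $m=2$ also $\Lambda_\ell=2\bz=\langle 2\Lambda_s\rangle$. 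The proof is then a matter of tracking which clauses of the relevant theorem survive this specialization, using only that the proper maximal subgroups of $\bz$ are the $q\bz$ with $q$ prime and that $m$ is itself prime.

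First I would treat the case $\mr{\Phi}\neq B_I$ via Theorem \ref{thmrednotoftypeB}. When $\mathrm{Gr}(\Psi)$ is proper we have $\Psi=\widehat{\mathrm{Gr}(\Psi)}$ with $\mathrm{Gr}(\Psi)$ a maximal root subsystem of $\mr{\Phi}$; the possibility $\mathrm{Gr}(\Psi)=\mr{\Phi}_\ell$ is excluded because it would require $\langle\Lambda_\ell\rangle=\Lambda_s$, whereas $\mathrm{Gr}(\Psi)=\mr{\Phi}_s$ is allowed since $\Lambda_\ell=m\Lambda_s$, and $\mathrm{Gr}(\Psi)\neq\mr{\Phi}_s,\mr{\Phi}_\ell$ is always allowed; this yields precisely case $(1)$. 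When $\mathrm{Gr}(\Psi)=\mr{\Phi}$, choose a $\bz$-linear $p$ as in Section \ref{sectiondefinitionp}; then $\Psi$ has one of the three forms of Lemma \ref{lemrednotoftypeB}. The form Lemma \ref{lemrednotoftypeB}(\ref{lemshortfull}) cannot occur, since it requires $\Lambda_\ell\neq m\Lambda_s$. In the form Lemma \ref{lemrednotoftypeB}(\ref{lemdetm}) the group $H$ is a maximal subgroup of $\bz$ containing $m\bz$, hence $H=m\bz=\Lambda_\ell$; absorbing $p_\alpha\in\Lambda_\ell$ on the long roots gives $\Psi=\{\alpha\oplus(p_\alpha+m\bz):\alpha\in\mr{\Phi}\}$, case $(2)(a)$. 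In the form Lemma \ref{lemrednotoftypeB}(\ref{lemdetnotm}) we have $H=q\bz$ with $q$ prime and $m\bz+q\bz=\bz$, i.e. $\gcd(m,q)=1$; for a long root $\alpha$, since $p_\alpha\in m\bz$, the Chinese Remainder Theorem gives $(p_\alpha+q\bz)\cap\Lambda_\ell=(p_\alpha+q\bz)\cap m\bz=p_\alpha+mq\bz$, which is case $(2)(b)$.

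Next I would treat $\mr{\Phi}=B_I$ (so $m=2$) via Theorem \ref{thmredoftypeB}. The proper-gradient subsystems again give exactly case $(1)$, now because $\mathrm{Gr}(\Psi)=\mr{\Phi}_\ell$ would require $\Lambda_\ell=\Lambda_s$, which fails. When $\mathrm{Gr}(\Psi)=\mr{\Phi}$, I specialize Lemma \ref{lemredoftypeB}. In the form Lemma \ref{lemredoftypeB}(\ref{lemlongfullb}), $\langle\Lambda_s\rangle/\Lambda_\ell=\bz/2\bz$, whose only maximal proper union of cosets is $S=2\bz$, so $\Psi=\{\alpha\oplus(p_\alpha+2\bz):\alpha\in\mr{\Phi}\}$, case $(2)(a)$. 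The form Lemma \ref{lemredoftypeB}(\ref{lemdetmb}) cannot occur, since it requires $\Lambda_\ell\neq\langle 2\Lambda_s\rangle$. In the form Lemma \ref{lemredoftypeB}(\ref{lemdetnotmb}), $H=2q\bz$ is a maximal subgroup of $2\bz$ with $q$ prime and, since $\Lambda_s=\bz$ is a group, the maximal admissible $S$ is the subgroup $q\bz$, so $\Psi=\{\alpha\oplus(p_\alpha+q\bz):\alpha\in\mr{\Phi}_s\}\cup\{\alpha\oplus(p_\alpha+2q\bz):\alpha\in\mr{\Phi}_\ell\}$. It remains to see that $q=2$ (equivalently $\gcd(m,q)\neq1$) does not occur: if $q=2$ then $\Psi$ is strictly contained in $\Psi_0:=\{\alpha\oplus(p_\alpha+2\bz):\alpha\in\mr{\Phi}_s\}\cup\{\alpha\oplus 2\bz:\alpha\in\mr{\Phi}_\ell\}$, and $\Psi_0$ is a root subsystem (of the form Lemma \ref{lemredoftypeB}(\ref{lemlongfullb}) just found) which is itself strictly contained in $\Phi$, contradicting maximality; hence $\gcd(2,q)=1$ and we land in case $(2)(b)$. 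Putting the two cases together gives the stated list, and the converse (each listed $\Psi$ is maximal) is the corresponding clause of Theorem \ref{thmrednotoftypeB} or Theorem \ref{thmredoftypeB}.

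The step I expect to need the most care is reconciling the two ``divisible'' families, Lemma \ref{lemrednotoftypeB}(\ref{lemdetnotm}) and Lemma \ref{lemredoftypeB}(\ref{lemdetnotmb}): one must check that after specialization they collapse to the single family $(2)(b)$ with the uniform hypothesis $\gcd(m,q)=1$, in particular that $q=m$ is ruled out in the type $B_I$ case --- a constraint not visible in the lattice hypotheses of Lemma \ref{lemredoftypeB}(\ref{lemdetnotmb}) itself, which one recovers by the containment argument above. The rest is routine substitution of $\Lambda_s=\bz$, $\Lambda_\ell=m\bz$ into results already proved.
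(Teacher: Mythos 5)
Your proposal is correct and follows essentially the same route as the paper, whose proof is a one-line reduction to Propositions \ref{proprednotoftypeB} and \ref{propredoftypeB}: you simply carry out the specialization $\Lambda_s=\bz$, $\Lambda_\ell=m\bz$ in the classification theorems explicitly. Your extra containment argument ruling out $q=m$ in the type $B_I$ branch (where the lattice hypotheses of Lemma \ref{lemredoftypeB}(\ref{lemdetnotmb}) degenerate because $\Lambda_\ell=\langle 2\Lambda_s\rangle$) is a genuinely needed detail that the paper's one-line proof leaves implicit, and you handle it correctly.
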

	\begin{proof}
		Proof follows from Proposition \ref{proprednotoftypeB} and Proposition \ref{propredoftypeB}.
	\end{proof}
	
	\subsection{Saito's EARS}\label{secnul2} An important class of extended affine root systems of nullity $2$ was introduced by Saito in \cite[Section 5]{saito1985extended}. We shall use Theorem \ref{thmrednotoftypeB}, Theorem \ref{thmredoftypeB} and Theorem  \ref{thmnonred} to characterize the maximal root subsystems of Saito's EARS we describe below.
	\begin{itemize}
		\item For a reduced irreducible finite root system $\mr{\Phi}$ and $a,b\in\bz$, the affine reflection system $\mr{\Phi}^{(a,b)}$ is given by $$\mathrm{Gr}(\mr{\Phi}^{(a,b)})=\mr{\Phi}\cup \{0\},\ \ \Lambda_s=\bz\times \bz,\ \ \Lambda_\ell=a\bz\times b\bz.$$\vspace{-10pt}
		\item The affine reflection system $B_n^{(2,2)*}$ is given by $$\mathrm{Gr}(B_n^{(2,2)*})=B_n\cup\{0\},\ \ \Lambda_s=\{(0,0),(1,0),(0,1)\}+2\bz\times 2\bz ,\ \ \Lambda_\ell=2\bz\times 2\bz.$$\vspace{-10pt}
		\item The affine reflection system $C_n^{(1,1)*}$ is given by $$\mathrm{Gr}(C_n^{(1,1)*})=C_n\cup\{0\},\ \ \Lambda_s=\bz\times \bz ,\ \ \Lambda_\ell=\{(0,0),(1,0),(0,1)\}+2\bz\times 2\bz.$$\vspace{-10pt}
		\item The remaining are non-reduced affine reflection systems. They are given by
		$$\mathrm{Gr}(BC_n^{(2,1)})=BC_n\cup\{0\} ,\ \ \Lambda_s=\bz\times \bz,\ \ \Lambda_\ell=\bz\times \bz,\ \ \Lambda_d= \{(1,0),(1,1)\}+2\bz\times 2\bz,$$
		$$\mathrm{Gr}(BC_n^{(2,4)})= BC_n\cup\{0\},\ \ \Lambda_s=\bz\times \bz,\ \ \Lambda_\ell=\bz\times 2\bz,\ \ \Lambda_d=\{(1,0)\}+2\bz\times 4\bz,$$
		$$\mathrm{Gr}(BC_n^{(2,2)}(1))= BC_n\cup\{0\},\ \ \Lambda_s=\bz\times \bz,\ \ \Lambda_\ell=\bz\times \bz,\ \ \Lambda_d= \{(1,0)\}+2\bz\times 2\bz,$$
		$$\mathrm{Gr}(BC_n^{(2,1)}(2))= BC_n\cup\{0\},\ \ \Lambda_s=\bz\times \bz,\ \ \Lambda_\ell=\bz\times 2\bz,\ \ \Lambda_d=\{(1,0)\}+2\bz\times 2\bz.$$
	\end{itemize}
	\vspace{-20pt}

	\subsubsection{}\label{rank2sp} Before classifying Saito’s EARS for rank $2$, we note that additional properties are present in rank $2$ compared to the general case. We provide detailed explanations of these additional properties below.
	\begin{enumerate}[leftmargin=*]
		\item If $\Psi$ is defined by Lemma \ref{lemrednotoftypeB}(\ref{lemshortfull}), $\Lambda_\ell$ is a subgroup and $m\Lambda_s\subsetneq\Lambda_\ell\subsetneq \Lambda_s$: Since $m\Lambda_s$ is a maximal subgroup of $\Lambda_\ell$ of index $2,$ it follows that
		$\Psi$ is of the form
		\begin{equation}\label{61r2}
			\Psi=\{\alpha\oplus \Lambda_s:\alpha\in\mr{\Phi}_s\}\cup\{\alpha\oplus (p_\alpha+m\Lambda_s):\alpha\in\mr{\Phi}_\ell\}
		\end{equation}
		Dually, if $\Phi$ is of type $C_I$ and $\Psi$ is defined by Lemma \ref{lemredoftypeB}(\ref{lemlongfullb}) or $\Phi$ is not of type $C_I$ and $\Psi$ is defined by Lemma \ref{lemrednotoftypeB}(\ref{lemdetm}), $\Lambda_s$ is a subgroup and $\Lambda_\ell\neq \Lambda_s,$ then $\Psi$ is of the form
		\begin{equation}\label{71r2}
			\Psi=\{\alpha\oplus(p_\alpha+ \Lambda_\ell):\alpha\in\mr{\Phi}_s\}\cup\{\alpha\oplus \Lambda_\ell:\alpha\in\mr{\Phi}_\ell\}
		\end{equation}
		
		\noindent If $\Lambda_\ell$ is not a subgroup satisfying $2\Lambda_s\subsetneq\Lambda_\ell\subsetneq \Lambda_s$, then $\Lambda_\ell$ is a union of at most three cosets in $\Lambda_s/2\Lambda_s.$ In particular, any maximal union of cosets of $m\Lambda_s$ in $\Lambda_\ell$ containing $0$ is a subgroup of $\Lambda_s.$ Therefore, there exists a subgroup $H'$ of $\Lambda_s$ satisfying $m\Lambda_s\subseteq H'\subseteq \Lambda_\ell$ such that 
		\begin{equation}\label{extra71}
			\Psi=\{\alpha\oplus\Lambda_s:\alpha\in\mr{\Phi}_s\}\cup\{\alpha\oplus (p_\alpha+H'):\alpha\in\mr{\Phi}_\ell\}
		\end{equation}
		\item Now we consider the case when $\Psi$ is defined by Lemma \ref{lemredoftypeB}(\ref{lemdetnotmb}). Recall the set $S$ and the fact that $\Lambda_\ell$ is a subgroup of $\langle \Lambda_s\rangle.$ We identify $\langle \Lambda_s\rangle$ with $\bz\times \bz.$ \medskip
		
		\begin{itemize}[leftmargin=*]
			\item \textbf{Case I:} Let $\Lambda_\ell=\Lambda_s.$ By Lemma \ref{lemmaximalsubgroup}, for any maximal subgroup $H$ of $\Lambda_\ell,$ we have $\det(H)/\det(\Lambda_\ell)=2$ if and only if $2\Lambda_s\subseteq H.$ Therefore, it follows that $\Psi$ is of the form \begin{equation}\label{73r2eq}
				\Psi=\{\alpha\oplus (p_\alpha+H):\alpha\in \mr{\Phi}\}
			\end{equation}
			for some maximal subgroup $H$ of $\Lambda_\ell$ such that $\det(H)/\det(\Lambda_\ell)=p\neq 2.$

			\item \textbf{Case II:} Now assume that $\Lambda_\ell=2\langle\Lambda_s\rangle.$ Then $\Lambda_\ell$ has a Hermite normal basis of the form $\left(\begin{smallmatrix} 2& 0\\ 0 &2 \end{smallmatrix}\right).$ Therefore, for some prime $p,$ the subgroup $H$ has a Hermite normal basis of exactly one of the following forms:
			$$\begin{pmatrix} 2p & 0 \\ 0 &2 \end{pmatrix},\ \text{ or }\ \begin{pmatrix}
				2 & 2x\\ 0 &2p
			\end{pmatrix}\ \ \text{for some }0\le x<p.$$
			
			If $H$ is given by the first form, the coset representatives $A$ of $H$ in $\langle \Lambda_s \rangle$ is given by $A=\{(2r,0),(2r+1,0),(2r,1),(2r+1,1):0\le r<p\}.$ An easy computations of when $2a\in H$ for $a\in A$ implies that 
			\begin{equation}\label{funnyeq1}
				S\subseteq\begin{cases}
					\bigcup_{r=0}^{p-1} ((2r,0)+H)\cup ((2r,1)+H) & \text{ if }p=2,\\
					H\cup ((p,0)+H)\cup((p,1)+H)\cup ((0,1)+H) & \text{ if }p\neq 2.
				\end{cases}
			\end{equation}

			Now assume that $H$ is given by the second form. The coset representatives of $H$ in $\langle \Lambda_s \rangle$ is given by $A=\{(0,2r),(0,2r+1),(1,2r),(1,2r+1):0\le r<p\}.$ A simple computation implies \begin{itemize}
				\item $(0,4r)\in H$ if and only if either $p=2$ or $p\neq 2$ and $r=0,$
				\item $(0,4r+2)\in H$ if and only if $p\neq 2$ and $r=(p-1)/2,$
				\item $(2,4r)\in H$ if and only if $r=x/2$ or $r=(x+p)/2,$
				\item $(2,4r+2)\in H$ if and only if $r=(x-1)/2$ or $r=(x+p-1)/2.$
			\end{itemize}
			Therefore, we obtain 
			\begin{equation}\label{funnyeq2}
				S\subseteq\begin{cases}
					\Lambda_\ell\cup ((1,0)+H)\cup ((1,2)+H)& \text{ if } p=2 \text{ and }x=0,\\
					\Lambda_\ell\cup ((1,1)+H)\cup ((1,3)+H)& \text{ if } p=2 \text{ and }x=1,\\
					H\cup ((1,x)+H)\cup ((1,x+p)+H)\cup ((0,p)+H)& \text{ if } p\neq 2.
				\end{cases}
			\end{equation}
			
			If we denote by $S'$ the right side of Equation \eqref{funnyeq1} and \eqref{funnyeq2}, then $\Psi$ is of the form 
			\begin{equation}\label{72r2IIInotgroup}
				\Psi=\{\alpha\oplus (p_\alpha+S')\cap \Lambda_s:\alpha\in\mr{\Phi}_s\}\cup \{\alpha\oplus (p_\alpha+H):\alpha\in\mr{\Phi}_\ell\},
			\end{equation}
			
			If $\Lambda_s$ is a group, then in both the cases, $S=S'$ and $S$ is a subgroup of $\Lambda_s.$ More precisely, $\Psi$ is of the form 
			\begin{equation}\label{72r2IIIgroup}
				\Psi=\{\alpha\oplus (p_\alpha+H'):\alpha\in\mr{\Phi}_s\}\cup \{\alpha\oplus (p_\alpha+H):\alpha\in\mr{\Phi}_\ell\},
			\end{equation}
			where $H'$ is the subgroup of $\Lambda_s$ given by the Hermite normal basis $B'$ where 
			$$B'=\begin{cases}
				\left(\begin{smallmatrix}p & 0 \\0 & 1 \end{smallmatrix}\right) & \text{ if } H \text{ is of the first form},\\
				\left(\begin{smallmatrix}1 & x \\0 & p \end{smallmatrix}\right) & \text{ if } H \text{ is of the second form}.
			\end{cases}$$
			
			\item \textbf{Case III:} Assume that $\langle2\Lambda_s\rangle\subsetneq \Lambda_\ell\subsetneq  \Lambda_s.$ Note that $\Lambda_s$ must be a subgroup in this case. Similar computation as above shows that there exists a subgroup $H'$ of $\Lambda_s$ which contains $H$ so that $\Psi$ is of the form 
			\begin{equation}\label{73r2remaining}
				\Psi=\{\alpha\oplus (p_\alpha+H'):\alpha\in\mathring{\Phi}_s\}\cup\{\alpha\oplus (p_\alpha+H):\alpha\in \mathring{\Phi}_s\}.
			\end{equation}
			More precisely, the Hermite normal basis $B'$ of $H'$ is given by Table \ref{FunnyTable} ($0\le x<1,\ 0\le y<p$).
			
			\noindent In any of the above three cases, for $m=2,$ $\Psi^\vee$ is given by Lemma \ref{lemrednotoftypeB}(\ref{lemdetnotm}) where $H$ is explicitly given by above.
		\end{itemize}
	\end{enumerate}
	
	\begin{center}
		\begin{table}
			{\renewcommand{\arraystretch}{1.25}
				\renewcommand{\tabcolsep}{10pt}
				\begin{NiceTabular}{|c|c|c|c|c|}[hvlines]
					$\Lambda_\ell$ &  $H$ &  $p$&  Sub cases &  $B'$\\
					\Block{4-1}{$\begin{pmatrix}2&0\\0&1\end{pmatrix}$} & $\begin{pmatrix} 2p& 0\\0&1\end{pmatrix}$ & Any & Any & $\begin{pmatrix}p&0\\0&1 \end{pmatrix}$\\
					& \Block{3-1}{$\begin{pmatrix}2&y\\0&p\end{pmatrix}$} & $p=2$  & $y=1$  & $\begin{pmatrix}2&0\\0&1\end{pmatrix}$ \\
					& & \Block{2-1}{$p\neq 2$} & $y$-odd & $\begin{pmatrix}1&\frac{y+p}{2}\\0&p\end{pmatrix}$ \\
					& & & $y$-even & $\begin{pmatrix}1&\frac{y}{2}\\0&p\end{pmatrix}$ \\
					\Block{3-1}{$\begin{pmatrix}1&x\\0&2\end{pmatrix}$} & $\begin{pmatrix} p& px\\0&2\end{pmatrix}$ & $p\neq 2$ & Any & $\begin{pmatrix}p&0\\0&1 \end{pmatrix}$\\
					& \Block{2-1}{$\begin{pmatrix}1&x+2y\\0&2p\end{pmatrix}$} & $p=2$ & \Block{2-1}{Any} & $\begin{pmatrix}1&x\\0&2\end{pmatrix}$\\
					& & $p\neq 2$  &  & $\begin{pmatrix}1&x+2y(\mathrm{mod}\ p)\\0&p\end{pmatrix}$
			\end{NiceTabular}}
			\caption{The case $2\Lambda_s\subsetneq \Lambda_\ell\subsetneq  \Lambda_s.$}
			\label{FunnyTable}
		\end{table}
	\end{center}
	
	\subsubsection{Classification}
	We are now in a position to classify the maximal root subsystems of Saito's EARS. Table \ref{Tablenonreduced} describes the maximal root subsystems of Saito's EARS with full gradient. For non-reduced case, the table can be read with the forms obtained in Section \ref{rank2sp} for rank $2.$
	
	\begin{center}
		\begin{table}[ht]
			{\renewcommand{\arraystretch}{1.5}
				\renewcommand{\tabcolsep}{4pt}
				\begin{NiceTabular}{|c|c|c|c|c|}[hvlines]
					Class & $\Lambda$  & Form  & Gradient type &  Maximal root subsystems  \\
					\Block{14-1}{\text{Reduced}} & \Block{12-1}{$\begin{array}{c} \Lambda_s \text{ and } \\
							\Lambda_\ell\\
							\text{are both}\\
							\text{groups}
						\end{array}$} & \Block{3-1}{$\mr{\Phi}^{(1,1)}$}   & $B$  & Lemma \ref{lemredoftypeB}(\ref{lemdetmb}), Eq \eqref{73r2eq} \\
					& & & $C$ & Lemma \ref{lemrednotoftypeB}(\ref{lemshortfull}), $\Psi^\vee$ ($\Psi$ in Eq \eqref{72r2IIIgroup}) \\
					& & & $\not= B,C$ & Lemma \ref{lemrednotoftypeB}(\ref{lemshortfull}), Lemma \ref{lemrednotoftypeB}(\ref{lemdetnotm})  \\
					&	& \Block{3-1}{$\mr{\Phi}^{(1,m)}$}   & $B$  & Eq \eqref{71r2}, Lemma \ref{lemredoftypeB}(\ref{lemdetmb}), Eq \eqref{73r2remaining}\\
					& & & $C$ & Eq \eqref{61r2}, Lemma \ref{lemrednotoftypeB}(\ref{lemdetm}), $\Psi^\vee$ ($\Psi$ in Eq \eqref{73r2remaining}) \\
					& & & $\not= B,C$ & Lemma \ref{lemrednotoftypeB}(\ref{lemshortfull}), Lemma \ref{lemrednotoftypeB}(\ref{lemdetm}), Lemma \ref{lemrednotoftypeB}(\ref{lemdetnotm}) \\
					&	& \Block{3-1}{$\mr{\Phi}^{(m,1)}$}   & $B$  &Eq \eqref{71r2}, Lemma \ref{lemredoftypeB}(\ref{lemdetmb}), Eq \eqref{73r2remaining}\\
					& & & $C$ & Eq \eqref{61r2}, Lemma \ref{lemrednotoftypeB}(\ref{lemdetm}), $\Psi^\vee$ ($\Psi$ in Eq \eqref{73r2remaining})\\
					& & & $\not= B,C$ & Lemma \ref{lemrednotoftypeB}(\ref{lemshortfull}), Lemma \ref{lemrednotoftypeB}(\ref{lemdetm}), Lemma \ref{lemrednotoftypeB}(\ref{lemdetnotm}) \\
					&	& \Block{3-1}{$\mr{\Phi}^{(m,m)}$}   & $B$ & Lemma \ref{lemredoftypeB}(\ref{lemlongfullb}), Eq \eqref{72r2IIIgroup}\\
					& & & $C$ & Lemma \ref{lemrednotoftypeB}(\ref{lemdetm}), $\Psi^\vee$ ($\Psi$ in Eq \eqref{73r2eq})\\
					& & & $\not= B,C$ & Lemma \ref{lemrednotoftypeB}(\ref{lemdetm}), Lemma \ref{lemrednotoftypeB}(\ref{lemdetnotm})   \\
					&	\Block{2-1}{$\begin{array}{c}
							\text{One of }\Lambda_s \\
							\text{ and } \Lambda_\ell \text{ is}\\
							\text{not a group}
						\end{array}$}& $B_n^{(2,2)*}$  &   & Lemma  \ref{lemredoftypeB}(\ref{lemlongfullb}), $\Psi$ in Eq \eqref{72r2IIInotgroup}\\
					&	& $C_n^{(1,1)*}$ & & Eq (\ref{extra71}), $\Psi^\vee$ ($\Psi$ in Eq \eqref{72r2IIInotgroup})  \\
					\Block{4-1}{Non-reduced} & \Block{4-1}{}  & $BC_n^{(2,1)}$  &\Block{4-1}{}  & Theorem \ref{thmnonred}(1,2,5), Proposition \ref{nonredMaxBproper}(2,3) \\
					&	&$BC_n^{(2,4)}$   &   & Theorem \ref{thmnonred}(1,3,4), Proposition \ref{nonredMaxBproper}(1,2,3)\\
					&	& $BC_n^{(2,2)}(1)$   &  & Theorem \ref{thmnonred}(1,2,4), Proposition \ref{nonredMaxBproper}(2,3)\\
					&	& $BC_n^{(2,2)}(2)$  &   & Theorem \ref{thmnonred}(1,3,5), Proposition \ref{nonredMaxBproper}(1,2,3)\\
			\end{NiceTabular}}
			\caption{Maximal root subsystems of Saito's EARS}
			\label{Tablenonreduced}
		\end{table}
	\end{center}

	\section*{Acknowledgement} The author thanks Prof. R. Venkatesh for many useful discussions. The author also thanks Prof. Apoorva Khare for providing the opportunity to work under his mentorship as a research associate, with partial support from the Swarna Jayanti Fellowship grant SB/SJF/2019-20/14, during which, part of this work was carried out. The author also acknowledges The Institute of Mathematical Sciences, Chennai, for their financial support and exceptional facilities that enabled the completion of this work.
	
	\bibliographystyle{plain}
	\bibliography{bibliography}

\end{document}